\date{\today}
\title[]{On the cohomology of $L^2$-harmonic forms of an incomplete Riemannian manifold}
\thanks{2020 {\em Mathematics Subject Classification: 58G05, 58A14, 58A35 }}
\author{F. Bei, M. Spreafico}
\address[Francesco Bei]{\tt Dipartimento di Matematica, Sapienza Universit\`a di Roma}
\email{bei@mat.uniroma1.it}
\address[Mauro Spreafico]{\tt Dipartimento di Fisica, Universit\`a di Trento, and INFN Trento, Italy}
\email{mauro.spreafico@unitn.it}
\numberwithin{equation}{section}
\newtheorem{theo}[subsubsection]{Theorem}
\newtheorem{lem}[subsubsection]{Lemma}
\newtheorem{corol}[subsubsection]{Corollary}
\newtheorem{defi}[subsubsection]{Definition}
\newtheorem{prop}[subsubsection]{Proposition}
\newtheorem{rem}[subsubsection]{Remark}
\newtheorem{ass}[subsubsection]{Assumption}
\newcommand{\beq}{\begin{equation}}
\newcommand{\eeq}{\end{equation}}
\newcommand{\de}{{\delta}}
\newcommand{\om}{\omega}
\newcommand{\De}{\Delta}
\renewcommand{\Im}{{\rm Im\hspace{1pt}}}
\newcommand{\bu}{{\bullet}}
\renewcommand{\d}{{\rm d}}
\renewcommand{\H}{{\mathds{H}}}
\renewcommand{\H}{\mathcal{H}}
\newcommand{\CS}{\mathsf{C}}
\newcommand{\DS}{\mathsf{D}}
\newcommand{\HH}{{\mathcal{H}}}
\newcommand{\HF}{\mathfrak{H}}
\begin{document}

\maketitle
\begin{abstract} Motivated by the work of Cappell, Deturck, Gluch and Miller, we 
extend the notion of cohomology of harmonic forms (of a compact manifold with boundary) to the abstract setting of Hilbert complexes. Then, we present some geometric applications of our construction to  incomplete Riemannian manifolds with particular interest to the case of smoothly stratified Thom-Mather spaces.

%Inspired/motivated by the work of Cappel, Deturck, Gluch and Miller, we introduce a general abstract setting the cohomology of the harmonic forms  of an Hilbert complex
%
%We generalise the idea of Cappel, Deturck, Gluch and Miller of studying the cohomology of the harmonic forms on a compact manifold with boundary to the abstract setting of general Hilbert complexes. 
%
%Then, we present some geometric applications of our construction: to Riemannian manifolds and $L^2$ cohomology, and to Thom-Mather stratified spaces.

\end{abstract}

\tableofcontents 

\section{Introduction}

Let $(M,g)$ be a smooth compact connected oriented Riemannian $n$-manifold, possibly with boundary, and $\Omega^\bu(M)$ the vector space of the smooth differential forms. Denoting by $d$ the de Rham differential and by $\de$ the co-differential with respect to $g$, one defines the Hodge-Laplace operator by $\Delta=\de d+d \de$, and the space of the harmonic forms $\H^\bu(M)$ as those satisfying $\De\om=0$. Since the exterior differential $d$ commutes with $\De$, we have a subcomplex $(\H^{\bullet}(M),d_{\bullet})$ of the de Rham complex $(\Omega^{\bullet}(M),d_{\bullet})$. When $M$ is a closed manifold, a form is harmonic if and only if it is closed and co-closed, whence the cohomology groups of $(\H^{\bullet}(M),d_{\bullet})$ and  $(\Omega^{\bullet}(M),d_{\bullet})$ coincide. This fact is no longer true if the boundary of $M$ is not empty. 
More precisely, Cappel, Deturck, Gluch and Miller proved that in this case there is an isomorphism 
\[
H^q(\H^\bu(M),d_{\bullet})\cong H^q(\H^\bu(M),d_{\bullet})\oplus H^{q-1}(\H^\bu(M),d_{\bullet}).
\]
see \cite{CDGM}. As one can see from the above isomorphism, the cohomology of the complex of harmonic forms is different but still completely determined by the de Rham cohomology of $M$. In particular, $H^q(\H^\bu(M),d_{\bullet})$ has a purely topological interpretation. In \cite{CDGM} the authors also observed that it would be interesting to understand to what extent these results have analogues for $L^2$-harmonic forms on smoothly stratified pseudomanifolds, see \cite[p. 925]{CDGM}. Motivated by the above question, we started to investigate how to define a complex of $L^2$-harmonic forms over an arbitrarily fixed incomplete Riemannian manifold. Note that in this setting even the definition of the complex, due to the incompleteness of the metric, is not clear at all and present interesting challenges. Indeed, one needs to define an $L^2$ extension of $\Delta_k$, the Hodge Laplacian acting on degree $k$ forms, that commutes with the maximal and/or minimal extension of $d_k$, in such a way that the arising complex is not trivial.  For instance, a naive approach would be to choose $\Delta_{k,\max}$, the $L^2$-maximal extension of $\Delta_k$, but this doesn't work because $d_{k,\max}$ does not commute with $\Delta_{k,\max}$. Let us give now more details by describing the structure of the paper.  First of all we  rework the problem in the suitable setting of
Hilbert complexes. Therefore in Section \ref{s1}, we introduce the necessary notation and background. Sections \ref{s2} and \ref{s3} are devoted to the construction of the complex of harmonic vectors and relative harmonic vectors, while in Sections \ref{s4} and \ref{s5} we proceed as far as possible with the analysis of the cohomology of these complexes. In Sections \ref{s6} and \ref{s7}, we prove the analogous result of Cappel, Deturck, Gluch and Miller in the  abstract framework of Fredholm complexes and for both the complexes of absolute and relative harmonic vectors, see Th. \ref{TH} and Th. \ref{TH-b}. These are two of the main results of this paper. More precisely, in the absolute case we have:
\begin{theo}
\label{THint}
Let $\CS_{\bu,M}$ and $\CS_{\bu,m}$ be two Hilbert complexes as in \eqref{M}-\eqref{m} with  $\CS_{\bu,M}$ a Fredholm complex. Let $H^{q}(\CS'_{\bu})$ be the cohomology  of the complex of  harmonic vectors of the pair $(\CS_{\bu,M},\CS_{\bu,m})$, see \eqref{homology}. Then we have an isomorphism of vector spaces
$$H^{q}(\CS'_{\bu})\oplus \mathcal{H}^{q-1}_{m}(\CS_{\bu,M},\CS_{\bu,m})\cong H^{q}(\CS_{\bu,M})\oplus H^{q-1}(\CS_{\bu,M}).$$ 
\end{theo}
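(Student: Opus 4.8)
\medskip
\noindent\emph{Proof strategy.} The plan is to reduce the assertion to the weak Hodge (Kodaira) decomposition of the Fredholm complex $\CS_{\bu,M}$ and then to dissect the harmonic--vector complex $\CS'_\bu$ degreewise against that decomposition. First I would recall that, since $\CS_{\bu,M}$ is Fredholm, every range $\mathrm{ran}\,d_{q,M}$ is closed, one has the orthogonal splitting $H_q=\mathrm{ran}\,d_{q-1,M}\oplus\mathcal{H}^q_M\oplus\mathrm{ran}\,d_{q,M}^{*}$ with $\mathcal{H}^q_M=\ker d_{q,M}\cap\ker d_{q-1,M}^{*}$, and a canonical isomorphism $H^q(\CS_{\bu,M})\cong\mathcal{H}^q_M$. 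Thus it suffices to construct an isomorphism
\[
H^q(\CS'_\bu)\oplus\mathcal{H}^{q-1}_m\;\cong\;\mathcal{H}^q_M\oplus\mathcal{H}^{q-1}_M .
\]

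Next, using the description of the harmonic vectors of the pair established in Sections~\ref{s2}--\ref{s3}, I would show that each space $\mathcal{H}'_q$ is compatible with the Hodge splitting, namely $\mathcal{H}'_q=\mathcal{H}^q_M\oplus\mathcal{A}_q\oplus\mathcal{B}_q$ with $\mathcal{A}_q:=\mathcal{H}'_q\cap\mathrm{ran}\,d_{q-1,M}$ (the ``exact'' harmonic vectors) and $\mathcal{B}_q:=\mathcal{H}'_q\cap\mathrm{ran}\,d_{q,M}^{*}$ (the ``coexact'' harmonic vectors); concretely one verifies that each Hodge projector of $\CS_{\bu,M}$ maps a harmonic vector of the pair to a harmonic vector of the pair. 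With respect to this decomposition the induced differential $d'_q=d_{q,M}|_{\mathcal{H}'_q}$ annihilates $\mathcal{H}^q_M\oplus\mathcal{A}_q$ (these consist of closed vectors) and restricts to an \emph{injection} $d'_q\colon\mathcal{B}_q\hookrightarrow\mathcal{A}_{q+1}$ (a closed coexact vector lies in $\ker d_{q,M}\cap(\ker d_{q,M})^{\perp}$, hence vanishes). Consequently $\ker d'_q=\mathcal{H}^q_M\oplus\mathcal{A}_q$ and $\mathrm{ran}\,d'_{q-1}=d'_{q-1}(\mathcal{B}_{q-1})\subseteq\mathcal{A}_q$, so that
\[
H^q(\CS'_\bu)\;\cong\;\mathcal{H}^q_M\;\oplus\;\bigl(\mathcal{A}_q/d'_{q-1}(\mathcal{B}_{q-1})\bigr);
\]
one may also phrase this through the short exact sequence of complexes $0\to(\mathcal{A}^\bu,0)\to\CS'_\bu\to(\mathcal{H}^\bu_M\oplus\mathcal{B}^\bu,0)\to0$ and its long exact sequence, whose extensions split since we are dealing with vector spaces.

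It then remains to identify the ``defect'' space $\mathcal{A}_q/d'_{q-1}(\mathcal{B}_{q-1})$, and this is the step I expect to be the real work. The goal would be to show that the adjoint differential of the pair induces a map $[\alpha]\mapsto[\delta\alpha]\in H^{q-1}(\CS_{\bu,M})$ on $\mathcal{A}_q/d'_{q-1}(\mathcal{B}_{q-1})$ --- well defined because $\delta\alpha$ is $d_{q-1,M}$-closed whenever $\alpha$ is exact and harmonic, and because $\delta$ annihilates $d'_{q-1}(\mathcal{B}_{q-1})$ --- that this map is injective, and that its cokernel is canonically isomorphic to $\mathcal{H}^{q-1}_m$, i.e.\ there is a (splitting) short exact sequence $0\to\mathcal{A}_q/d'_{q-1}(\mathcal{B}_{q-1})\to H^{q-1}(\CS_{\bu,M})\to\mathcal{H}^{q-1}_m\to0$. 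Granting this, combining with the previous step gives $H^q(\CS'_\bu)\oplus\mathcal{H}^{q-1}_m\cong\mathcal{H}^q_M\oplus\mathcal{A}_q/d'_{q-1}(\mathcal{B}_{q-1})\oplus\mathcal{H}^{q-1}_m\cong\mathcal{H}^q_M\oplus H^{q-1}(\CS_{\bu,M})\cong H^q(\CS_{\bu,M})\oplus H^{q-1}(\CS_{\bu,M})$, which is the claim.

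The main obstacle is precisely that last identification, and inside it two points: that a primitive of an exact harmonic vector may be chosen with its coexact component again a harmonic vector (so that $d'_{q-1}(\mathcal{B}_{q-1})$ is exactly the kernel of the induced map), and that the cokernel is $\mathcal{H}^{q-1}_m$ and nothing larger. In the geometric situation on a manifold with boundary this reflects elliptic regularity together with unique continuation; in the present abstract framework it has to be read off from the fine structure of the harmonic-vector complexes constructed in Sections~\ref{s2}--\ref{s3}, and this is where the Fredholm hypothesis on $\CS_{\bu,M}$ is genuinely exploited. A subsidiary obstacle, also resting on that earlier construction, is the compatibility of $\mathcal{H}'_q$ with the Hodge splitting used in the second step.
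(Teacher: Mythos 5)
Your outline reproduces the paper's own route step for step: your first reduction is Proposition \ref{firststep} ($H^q(\CS'_\bu)\cong(\ker d_{q,M}\cap\ker\de_{q,m})\oplus\bigl(\ker P_q\cap\Im d_{q-1,M}\bigr)/\Im(d'_{q-1})$), and your proposed identification of the defect space via $[\alpha]\mapsto[\de_{q,M}\alpha]$, extended by the inclusion of $\mathcal{H}^{q-1}_m$ into $H^{q-1}(\CS_{\bu,M})$, is exactly the map $\tilde\de_q$ of Theorem \ref{crucial}. But you stop precisely where the proof begins: that $\tilde\de_q$ is an isomorphism is announced as ``the goal'' with no argument, and that assertion \emph{is} the theorem. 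Concretely, three things are missing. (i) The decomposition $\HF_{q-1}=(\ker d_{q-1,m}\cap\ker\de_{q-1,m})\oplus(\Im d_{q-2,M}+\Im\de_{q-1,M})$ (Lemma \ref{newHodge}), which forces $v=0$ in any relation $\de_{q,M}w+v=d_{q-2,M}z$ and hence decouples the two summands of your short exact sequence; its proof is one of the places where Fredholmness is genuinely used (the orthogonal complement of $\Im d_{q-2,M}\oplus\Im\de_{q-1,m}$ is finite dimensional, so the non-orthogonal sum $\Im d_{q-2,M}+\Im\de_{q-1,M}$ is still closed). (ii) Injectivity of the induced map on $\mathcal{A}_q/\Im(d'_{q-1})$: given $a\in\ker P_q\cap\Im d_{q-1,M}$ with $\de_{q,M}a$ exact, one must produce a primitive of $a$ lying in $\ker P_{q-1}$; the paper does this by iterating the strong Hodge decomposition through degrees $q$, $q-1$, $q-2$ to write the obstruction $c$ as $\de_{q-1,m}(d_{q-2,M}y)$ and then correcting by $d_{q-2,M}y$. (iii) Surjectivity, which again rests on Lemma \ref{newHodge} to write $u=u_0+d_{q-2,M}u_1+\de_{q,M}u_2$ with $u_0\in\mathcal{H}^{q-1}_m$ and then realizes $u_2=d_{q-1,M}\gamma$ with $d_{q-1,M}\gamma\in\ker P_q$. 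You correctly locate the difficulty, but locating it is not resolving it.

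A secondary inaccuracy: the three-way splitting $\ker P_q=\mathcal{H}^q_M\oplus\mathcal{A}_q\oplus\mathcal{B}_q$ (``each Hodge projector maps a harmonic vector of the pair to a harmonic vector of the pair'') is not proved and is doubtful for $u\in\ker P_q$ with $d_{q,M}u\neq0\neq\de_{q,M}u$: the relation $P_qu=0$ only gives $d_{q-1,M}\de_{q,M}u=-\de_{q+1,M}d_{q,M}u$, and the subspaces $\Im d_{q-1,M}$ and $\Im\de_{q+1,M}$ need not be orthogonal in this mixed minimal/maximal setting --- this is exactly why Lemma \ref{newHodge} involves a non-direct sum. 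What is true, and all that is needed, is the splitting of $\ker d'_q$ (where the coexact component vanishes automatically) together with $\Im(d'_{q-1})\subseteq\ker P_q\cap\Im d_{q-1,M}$; accordingly the quotient should be taken by $\Im(d'_{q-1})$ rather than by $d'_{q-1}(\mathcal{B}_{q-1})$, whose equality with $\Im(d'_{q-1})$ you have not justified.
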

In the relative case we have:
\begin{theo}
\label{TH-b-int}
Let $\CS_{\bu,M}$ and $\CS_{\bu,m}$ be two Fredholm complexes as in \eqref{M}-\eqref{m}. Let $H^{q}(\CS''_{\bu})$ be the cohomology  groups defined in \eqref{homology-b}. Then we have an isomorphism of vector spaces
$$H^{q}(\CS''_{\bu})\oplus \mathcal{H}^{q-1}_{m}(\CS_{\bu,M},\CS_{\bu,m})\cong H^{q}(\CS_{\bu,m})\oplus H^{q-1}(\CS_{\bu,M}).$$ 
\end{theo}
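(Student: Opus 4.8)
The plan is to reproduce, \emph{mutatis mutandis}, the argument of Theorem~\ref{TH} (the absolute case), working now on the minimal side, and to isolate precisely where the extra Fredholm hypothesis --- that $\CS_{\bu,m}$, and not only $\CS_{\bu,M}$, be Fredholm --- enters. The main analytic tool is the weak Hodge--Kodaira decomposition of a Fredholm Hilbert complex: for $\CS_{\bu,M}$ it yields, in each degree $q$, the orthogonal splitting of the ambient Hilbert space into the closure of the image of $d$, a finite--dimensional harmonic space realizing $H^{q}(\CS_{\bu,M})$, and the closure of the image of the adjoint; and since $\CS_{\bu,m}$ is also Fredholm, the completely parallel splitting is available for it, with harmonic space realizing $H^{q}(\CS_{\bu,m})$. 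I would begin by writing down both decompositions and by re-expressing, in their terms, the defining relations of $\CS''_{\bu}$ and of the groups $H^{q}(\CS''_{\bu})$ of \eqref{homology-b}.

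Since the Laplace operator attached to a Hilbert complex commutes with that complex's differential, $\CS''_{\bu}$ is a subcomplex of $\CS_{\bu,m}$, and one obtains a short exact sequence of Hilbert complexes
\[
0\longrightarrow \CS''_{\bu}\longrightarrow \CS_{\bu,m}\longrightarrow \QQ_{\bu}\longrightarrow 0 ,
\]
so that the problem reduces to computing $H^{\bu}(\QQ_{\bu})$ together with the connecting homomorphisms of its long exact sequence. The decisive point --- exactly as in the absolute case --- is that an element of $\CS''_{q}$ need not be closed: decomposing it into its exact, harmonic and coexact components, on which the Laplace operator defining $\CS''_{\bu}$ acts blockwise, one identifies $\ker(d_{q}|_{\CS''_{q}})$ and $\mathrm{im}(d_{q-1}|_{\CS''_{q-1}})$ in terms of the harmonic spaces of $\CS_{\bu,m}$ and of $\CS_{\bu,M}$, the remaining discrepancy being measured by $\mathcal{H}^{\bu}_m(\CS_{\bu,M},\CS_{\bu,m})$. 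Feeding this description into the long exact cohomology sequence above, and using that the resulting short exact sequences split since all groups in sight are finite--dimensional, should produce the stated isomorphism: the term $\mathcal{H}^{q-1}_m(\CS_{\bu,M},\CS_{\bu,m})$ enters as the defect in the block analysis, while $H^{q-1}(\CS_{\bu,M})$ comes from the coexact summand, which is why that summand is the same as in Theorem~\ref{TH}, only the leading term changing from $H^{q}(\CS_{\bu,M})$ to $H^{q}(\CS_{\bu,m})$, consistently with $\CS''_{\bu}$ lying inside $\CS_{\bu,m}$ rather than inside $\CS_{\bu,M}$.

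The step I expect to be the real obstacle is the one that is already subtle in Theorem~\ref{TH}: the bookkeeping of boundary conditions. The Laplace operator defining $\CS''_{\bu}$ is \emph{not} the self-adjoint Hodge Laplacian of $\CS_{\bu,m}$, so the naive integration by parts fails and its harmonic vectors are not, in general, closed and coclosed; every statement about its kernel must then be read off from the Hodge decomposition, with meticulous attention to which of the three relevant harmonic spaces --- the Hodge harmonic space of $\CS_{\bu,m}$, the space $\mathcal{H}^{\bu}_m$, or the harmonic space of $\CS_{\bu,M}$ --- represents a given summand or a given image. The second technical point, and the place where the Fredholm hypothesis on \emph{both} complexes is genuinely used, is that $\CS''_{q}$ is in general infinite--dimensional, so one has to verify that every subspace that occurs in the argument is closed; only then are the successive quotients honest, finite--dimensional cohomology groups and the final identification an isomorphism of vector spaces as claimed.
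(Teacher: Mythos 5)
Your first half does match the paper's route: the paper's Prop.~\ref{firststep-b} is exactly your ``block analysis'' of $\ker(d''_q)$ via the weak Hodge decomposition of $\CS_{\bu,m}$, giving $H^q(\CS''_{\bu})\cong\bigl(\ker(d_{q,m})\cap\ker(\de_{q,M})\bigr)\oplus\frac{\overline{\Im(d_{q-1,m})}\cap\ker(P_q)}{\Im(d''_{q-1})}$, and Fredholmness of $\CS_{\bu,m}$ identifies the first summand with $H^q(\CS_{\bu,m})$. The genuine gap is the second half. You never construct the map that realizes the remaining quotient, together with $\mathcal{H}^{q-1}_{m}(\CS_{\bu,M},\CS_{\bu,m})$, as $H^{q-1}(\CS_{\bu,M})$; you only assert that feeding the block analysis into a long exact sequence ``should produce the stated isomorphism.'' In the paper this is Theorem~\ref{crucial-b}, and it is the bulk of the work: the map is induced by $\de_{q,M}$ restricted to $\ker(P_q)\cap\Im(d_{q-1,m})$ (it lands in $\ker(d_{q-1,M})$ because $P_qu=0$ and $d_{q,m}u=0$ force $d_{q-1,M}\de_{q,M}u=0$), and proving it injective modulo $\Im(d''_{q-1})$ requires a multi-step descent through the strong Hodge decompositions of \emph{both} complexes in degrees $q$, $q-1$ and $q-2$, showing that any $a\in\ker(P_q)\cap\Im(d_{q-1,m})$ with $\de_{q,M}a$ exact can be rewritten as $d_{q-1,m}z$ with $z\in\ker(P_{q-1})$; surjectivity is a separate (easier) Hodge-theoretic argument. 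None of this is routine, and it is precisely where the Fredholm hypothesis on $\CS_{\bu,m}$ (and not only on $\CS_{\bu,M}$) is consumed --- not, as you suggest, merely in checking that various subspaces are closed.

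Two smaller points. Your opening claim that $\CS''_{\bu}$ is a subcomplex of $\CS_{\bu,m}$ ``since the Laplace operator attached to a Hilbert complex commutes with that complex's differential'' is not available here: $P_q=d_{q-1,M}\circ\de_{q,M}+\de_{q+1,M}\circ d_{q,M}$ is the Laplacian of neither $\CS_{\bu,M}$ nor $\CS_{\bu,m}$ (you yourself note this later), so the stability of $\ker(P_{\bu})\cap\DS(d_{\bu,m})$ under $d_{\bu,m}$ is the content of the paper's Lemma~\ref{l0-b} and needs its own domain-chasing argument. Second, the short exact sequence $0\to\CS''_{\bu}\to\CS_{\bu,m}\to\QQ_{\bu}\to0$ does not reduce the problem: computing $H^{\bu}(\QQ_{\bu})$ and the connecting homomorphisms is exactly as hard as the direct computation the paper performs, so this framing adds a layer of bookkeeping rather than removing one.
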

Finally, we complete this note with some geometric applications in Section \ref{s8}. First, we recall the definition of $L^2$ cohomology of a Riemannian manifold, in Section \ref{s8.1}. Then, in Section \ref{s8.2}, we show that our construction yields the same cohomology groups as in the work of Cappel, Deturck, Gluch and Miller. Finally, in Section \ref{s8.3}, we give some applications to smoothly Thom-Mather stratified spaces endowed with a $\hat{c}$-iterated  edge metric.\\

\noindent \textbf{Acknowledgments:} The first-named author was partially supported by 2024 Sapienza research grant ``New research trends in Mathematics at Castelnuovo'' and by INdAM - GNSAGA Project, codice CUP E53C24001950001.

%\vspace{1 cm}
%
%
%For we first rework the problem in the more suitable setting of $L^2$ cohomology. This is in Sections \ref{s1}, \ref{s2}, and \ref{s3}. In Sections \ref{s4} and \ref{s5}, we proceed as far as possible with the analysis of the complex of the harmonic forms for a generic complex of Hilbert spaces, in the absolute and relative cases, respectively. In Sections \ref{s6} and \ref{s7}, we prove the analogous of the result of Cappel, Deturck, Gluch and Miller for Fredholm complexes, again in the absolute and relative case, respectively. We complete this note with some applications in geometry, in Section \ref{s8}. After recalling the definition of $L^2$ cohomology on a Reimannian manifold, in Section \ref{s8.1}, in Section \ref{s8.2}, we show how our construction reduces to that of Cappel, Deturck, Gluch and Miller for smooth forms, and in Section \ref{s8.3} we give the result for a Thom-Mather stratified space. 

\section{Setting and notations}
\label{s1}
We start with the definition and the basic properties of a Hilbert complex. We refer to \cite{BL} for an in-depth treatment.\\
A Hilbert complex $\CS_\bu=(\HF_\bu, d_\bu)$ is a  (bounded) complex of Hilbert spaces and closed operators:
\[
\xymatrix{
\CS_\bu:& \DS(d_0)\ar[r]^{d_0}&\DS(d_1)\ar[r]^{d_1}&\dots\ar[r]^{d_q}&\dots \ar[r]^{d_{n-1}}&\HH_n,
}
\]
where  
\[
d_q:\DS(d_q)\subset \HF_q\to \HF_{q+1},
\]
is a densely defined closed linear operator with domain $\DS(d_q)$  contained in some  Hilbert spaces $\HF_q$, for each $q=0,...,n-1$, and, in addition,
\[
d_q(\DS(d_q))\subseteq \DS(d_{q+1})\quad \mathrm{and}\quad d_q\circ d_{q-1}=0.
\]

We write
\[
H^q(\CS_\bu)=\frac{\ker (d_q:\DS(d_q)\to \HF_{q+1})}{\Im (d_{q-1}:\DS(d_{q-1})\to \HF_q)},
\]
for the cohomology of $\CS_\bu$. We write
\[
\bar H^q(\CS_\bu)=\frac{\ker (d_q:\DS(d_q)\to \HF_{q+1})}{\overline{\Im (d_{q-1}:\DS(d_{q-1})\to \HF_q)}},
\]
for the reduced cohomology of $\CS_\bu$.

The complex $\CS_\bu$ induces in a natural way a second Hilbert complex, the so-called  dual complex:
\[
\xymatrix{
\CS^*_\bu:& \DS(\de_n)\ar[r]^{\de_n}&\DS(\de_{n-1})\ar[r]^{\de_{n-1}}&\dots\ar[r]^{\de_q}&\dots \ar[r]^{\de_{0}}&\HF_0,
}
\]
where
\[
\de_q=d_{q-1}^*:\HF_{q}\to \HF_{q-1},
\]
denotes the adjoint operator of $d_{q-1}:\HF_{q-1}\to \HF_{q}$. Note that $\de_q:\HF_{q}\to \HF_{q-1}$ is a closed and densely defined operator as $d_{q-1}:\HF_{q-1}\to \HF_{q}$ is so.\\
For these complexes we have the so-called {\em weak Hodge decomposition} 
\beq
\label{weak}
\HF^q=\left(\ker (d_q)\cap \ker (\de_q)\right)\oplus \overline{\Im (d_{q-1})} \oplus \overline{\Im (\de_{q+1})},
\eeq
and consequently the isomorphism
\beq
\label{weak1}
 \bar H^q(\CS_\bu)\cong \ker (d_q)\cap \ker (\de_q).
\eeq

The Hodge-Laplace operator associated to the complex $\CS_\bu$ is the  densely defined, closed and self adjoint operator
\[
\Delta_q=\de_{q+1}\circ d_q+d_{q-1}\circ \de_q,
\]
with domain $\DS(\Delta_q)=\{s\in \DS(d_q)\cap\DS(\de_q): d_qs\in \DS(\de_{q+1}),\ \de_qs\in \DS(d_{q-1})\}$. Note that

$$
\ker (\Delta_q)=\ker (d_q)\cap \ker (\de_{q})\quad\quad \overline{\Im(\Delta_q)}= \overline{\Im (d_{q-1})} \oplus \overline{\Im (\de_{q+1})}.
$$

The complex is said to be {\em weak-Fredholm} when $\bar H^q(\CS_{\bu})$ has finite dimension for each $q=0,...,n$. 
The complex is said to be {\em Fredholm} when $H^q(\CS_{\bu})$ has finite dimension for each $q=0,...,n$. If this is the case then  $\Im (d_{q})$ and $\Im(\de_q)$ are both closed,  the following isomorphism holds true
\[
H^q(\CS_\bu)
\cong\ker (d_q)\cap \ker (\de_{q}),
\]
and the strong Hodge decomposition
\[
\HF^q=\left(\ker (d_q)\cap \ker (\de_{q})\right)\oplus \Im (d_{q-1}) \oplus \Im (\de_{q+1})
\]
holds true, as well.

\section{The complex of harmonic vectors}
\label{s2}

Let  $\CS_{\bu,M}=(\HF_\bu, d_{\bu,M})$ and $\CS_{\bu,m}=(\HF_\bu, d_{\bu,m})$

\begin{equation}
\label{M}
\xymatrix{
\CS_{\bu,M}:& \DS(d_{0,M})\ar[r]^{d_{0,M}}&\dots\ar[r]^{d_{q,M}}&\dots \ar[r]^{d_{n-1,M}}&\HF_n,
}
\end{equation}

\begin{equation}
\label{m}
\xymatrix{
\CS_{\bu,m}:& \DS(d_{0,m})\ar[r]^{d_{0,m}}&\dots\ar[r]^{d_{q,m}}&\dots \ar[r]^{d_{n-1,m}}&\HF_n,
}
\end{equation}
be two given Hilbert complexes such that for each $q=0,...,n-1$, $d_{q,M}$ is a closed extension of $d_{q,m}$, that is $\DS(d_{q,m})\subset \DS(d_{q,M})$ and $d_{q,M}|_{\DS(d_{q,m})}=d_{q,m}$. It is clear that $d^*_{q,m}:\HF_{q+1}\rightarrow \HF_q$, that is the adjoint of $d_{q,m}:\HF_q\rightarrow \HF_{q+1}$, is a closed extension of $d_{q,M}^*:\HF_{q+1}\rightarrow \HF_q$, the adjoint of $d_{q,M}:\HF_q\rightarrow \HF_{q+1}$. Let us denote $\de_{q,m}:=d^*_{q-1,M}$, $\de_{q,M}:=d^*_{q-1,m}$ and let us define for each $q=0,...,n$ the operator $$P_q:\HF_q\rightarrow \HF_q\quad\quad P_q:=d_{q-1,M}\circ\de_{q,M}+\de_{q+1,M}\circ d_{q,M}$$ with domain $$\DS(P_q)=\{s\in \DS(d_{q,M})\cap\DS(\de_{q,M}): d_{q,M}s\in \DS(\de_{q+1,M}),\ \de_{q,M}s\in \DS(d_{q-1,M})\}.$$ Note that the operator $P_q$ can be thought as a (possibly non-closed) extension of both $\Delta_{q,M}$ and $\Delta_{q,m}$, the Hodge-Laplacians of the complexes $\CS_{\bu,M}$ and $\CS_{\bu,m}$, respectively. In particular we have $\ker(\Delta_{q,m})\subset \ker(P_q)$ and $\ker(\Delta_{q,M})\subset \ker(P_q)$. Our goal now is to show how $\ker(P_q)$ is related to the cohomology of $\CS_{\bu,M}$ and $\CS_{\bu,m}$.

\begin{lem}\label{l0} For each $q$, if $u\in \ker (P_q)$, then $d_{q,M} u\in \ker (P_{q+1})$.
\end{lem}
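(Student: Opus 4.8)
The plan is to argue directly from the definition $P_q=d_{q-1,M}\circ\de_{q,M}+\de_{q+1,M}\circ d_{q,M}$, using only the complex identity $d_{q,M}\circ d_{q-1,M}=0$ and the structural property $\Im(d_{q-1,M})\subseteq\DS(d_{q,M})$ of a Hilbert complex. The computation itself is a two-line cancellation; the only thing requiring attention is the bookkeeping of the domains of the unbounded operators involved, so I would split the argument into a domain check and the actual evaluation of $P_{q+1}$ on $d_{q,M}u$.

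First I would verify that $d_{q,M}u\in\DS(P_{q+1})$, i.e. that $d_{q,M}u\in\DS(d_{q+1,M})\cap\DS(\de_{q+1,M})$ with $d_{q+1,M}(d_{q,M}u)\in\DS(\de_{q+2,M})$ and $\de_{q+1,M}(d_{q,M}u)\in\DS(d_{q,M})$. Since $u\in\DS(P_q)\subseteq\DS(d_{q,M})$ and $\CS_{\bu,M}$ is a Hilbert complex, we have $d_{q,M}u\in\DS(d_{q+1,M})$ and $d_{q+1,M}(d_{q,M}u)=0$, which in particular lies in $\DS(\de_{q+2,M})$; moreover $d_{q,M}u\in\DS(\de_{q+1,M})$ is part of the hypothesis $u\in\DS(P_q)$. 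For the last condition I would use $P_qu=0$, which reads $d_{q-1,M}(\de_{q,M}u)+\de_{q+1,M}(d_{q,M}u)=0$, so that $\de_{q+1,M}(d_{q,M}u)=-d_{q-1,M}(\de_{q,M}u)\in\Im(d_{q-1,M})\subseteq\DS(d_{q,M})$. Hence $d_{q,M}u\in\DS(P_{q+1})$.

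Then I would simply compute $P_{q+1}(d_{q,M}u)=d_{q,M}\bigl(\de_{q+1,M}(d_{q,M}u)\bigr)+\de_{q+2,M}\bigl(d_{q+1,M}(d_{q,M}u)\bigr)$. The second summand vanishes because $d_{q+1,M}\circ d_{q,M}=0$. In the first summand I substitute the identity $\de_{q+1,M}(d_{q,M}u)=-d_{q-1,M}(\de_{q,M}u)$ obtained above, giving $-d_{q,M}\bigl(d_{q-1,M}(\de_{q,M}u)\bigr)=-(d_{q,M}\circ d_{q-1,M})(\de_{q,M}u)=0$. Therefore $P_{q+1}(d_{q,M}u)=0$, i.e. $d_{q,M}u\in\ker(P_{q+1})$, as claimed. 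I do not expect any genuine obstacle: the whole content is the cancellation coming from $d\circ d=0$ combined with the relation forced by $u\in\ker(P_q)$, and the only subtlety — that $\de_{q+1,M}(d_{q,M}u)$ actually lies in the domain of $d_{q,M}$ — is precisely what the relation $P_qu=0$ supplies.
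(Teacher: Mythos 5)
Your proposal is correct and follows essentially the same route as the paper's proof: you use $P_qu=0$ to rewrite $\de_{q+1,M}(d_{q,M}u)$ as $-d_{q-1,M}(\de_{q,M}u)$, deduce the domain membership from this, and then kill $P_{q+1}(d_{q,M}u)$ via $d_{q,M}\circ d_{q-1,M}=0$. Your version merely makes the domain bookkeeping (e.g.\ that $d_{q+1,M}(d_{q,M}u)=0$ lies in $\DS(\de_{q+2,M})$) slightly more explicit than the paper does.
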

\begin{proof}  Since $u\in \ker (P_q)$, it follows that
\[
\de_{q+1,M} (d_{q,M} u)=-d_{q-1,M}( \de_{q,M} u).
\]
Consequently,  $\de_{q+1,M} (d_{q,M} u)\in \DS(d_{q,M})$ which in turn implies that $d_{q,M}u\in \DS(P_{q+1})$. Finally, note that $$P_{q+1}(d_{q,M}u)=d_{q,M}(\de_{q+1,M} (d_{q,M} u))=-d_{q,M}(d_{q-1,M}( \de_{q,M} u))=0$$ as required.
\end{proof}

Dually we have:

\begin{lem}\label{l0.1} For each $q$, if $u\in \ker (P_q)$, then $\de_{q,M} u\in \ker (P_{q-1})$.
\end{lem}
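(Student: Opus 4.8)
The statement is the formal dual of Lemma~\ref{l0}: it is obtained by interchanging the ``raising'' operators $d_{\bu,M}$ with the ``lowering'' operators $\de_{\bu,M}$ and shifting the degree down by one, so the plan is essentially to transcribe that argument.

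First I would record the structural fact that legitimises the dualization. Since $\de_{q,M}=d^*_{q-1,m}$, the family $(\HF_\bu,\de_{\bu,M})$ is precisely the dual Hilbert complex $\CS^*_{\bu,m}$ of $\CS_{\bu,m}$, and in particular it is itself a Hilbert complex (see \cite{BL}). Hence $\de_{q,M}\big(\DS(\de_{q,M})\big)\subseteq\DS(\de_{q-1,M})$ and $\de_{q-1,M}\circ\de_{q,M}=0$ on $\DS(\de_{q,M})$, while dually $\CS_{\bu,M}=(\HF_\bu,d_{\bu,M})$ satisfies $d_{q,M}\big(\DS(d_{q,M})\big)\subseteq\DS(d_{q+1,M})$ and $d_{q+1,M}\circ d_{q,M}=0$.

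Next, for $u\in\ker(P_q)$ I would expand $P_qu=0$ into
\[
d_{q-1,M}\big(\de_{q,M}u\big)=-\,\de_{q+1,M}\big(d_{q,M}u\big).
\]
Since $d_{q,M}u\in\DS(\de_{q+1,M})$, the right-hand side lies in the range of $\de_{q+1,M}$, hence in $\DS(\de_{q,M})$; therefore $d_{q-1,M}(\de_{q,M}u)\in\DS(\de_{q,M})$. I would then verify the remaining membership conditions for $\de_{q,M}u\in\DS(P_{q-1})$: one has $\de_{q,M}u\in\DS(d_{q-1,M})$ because $u\in\DS(P_q)$; one has $\de_{q,M}u\in\DS(\de_{q-1,M})$ by the Hilbert-complex property of $\CS^*_{\bu,m}$; and $\de_{q-1,M}(\de_{q,M}u)=0\in\DS(d_{q-2,M})$. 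Finally I would compute, using $\de_{q-1,M}\circ\de_{q,M}=0$ and $\de_{q,M}\circ\de_{q+1,M}=0$ on the relevant domains,
\[
P_{q-1}\big(\de_{q,M}u\big)=d_{q-2,M}\big(\de_{q-1,M}\de_{q,M}u\big)+\de_{q,M}\big(d_{q-1,M}\de_{q,M}u\big)=0-\de_{q,M}\big(\de_{q+1,M}d_{q,M}u\big)=0,
\]
so that $\de_{q,M}u\in\ker(P_{q-1})$.

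There is no genuine obstacle here: the proof is a mechanical dualization of Lemma~\ref{l0}. The only point deserving attention is the domain bookkeeping — in particular checking that $\de_{q,M}u\in\DS(\de_{q-1,M})$ and that $d_{q-1,M}(\de_{q,M}u)\in\DS(\de_{q,M})$ — and both become immediate once one has observed that the lowering operators $\de_{\bu,M}$ constitute the Hilbert complex $\CS^*_{\bu,m}$.
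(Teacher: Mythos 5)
Your proof is correct and follows essentially the same route as the paper's: both extract the identity $d_{q-1,M}(\de_{q,M}u)=-\de_{q+1,M}(d_{q,M}u)$ from $P_qu=0$, use it to place $\de_{q,M}u$ in $\DS(P_{q-1})$, and then compute $P_{q-1}(\de_{q,M}u)=-\de_{q,M}(\de_{q+1,M}(d_{q,M}u))=0$. The only difference is that you make explicit the domain bookkeeping and the appeal to the dual-Hilbert-complex structure of $(\HF_\bu,\de_{\bu,M})=\CS^*_{\bu,m}$, which the paper leaves implicit.
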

\begin{proof}  Since $u\in \ker (P_q)$, it follows that
\[
d_{q-1,M}(\de_{q,M}u)=-\de_{q+1,M}(d_{q,M}u)
\]
and so we can conclude that $d_{q-1,M}(\de_{q,M}u)\in \DS(\de_{q,M})$ and thus  $\de_{q,M} u\in \DS( P_{q-1})$. We conclude by verifying that 
\[
P_{q-1} (\de_{q,M} u)= \de_{q,M}( d_{q-1,M}( \de_{q,M} u))=-\de_{q,M}( \de_{q+1,M}( d_{q,M}  u)) =0.
\]
\end{proof}

%\begin{rem} We are not able to prove that  $P^{q+1} d_{{\rm max},q}= d_{{\rm max},q}P^q$.
%\end{rem}
%
%
%
%The operator $P^q$ is in general not closed, so its kernel is not closed as well. However, $P^q$ is a restriction of the maximal extension $\Delta^{q}_{\rm max}$, that is closed and has closed kernel. It follows that the closure of the kernel of $P^q$ is contained in the kernel of $\Delta^q_{\rm max}$, and consequently its limit points are indeed concrete harmonic. It make sense to choose the space $\overline{\ker P^q}$ as the space of the harmonic forms we want to study. Note that $\overline{\ker P^q}$ is a (separable ) Hilbert space. However, with this choice we are not able to prove that the image of $d_q$ is contained in $\overline{\ker P^{q+1}}$. We easiest definition is therefore the following one.
In view of Lemma \ref{l0} we have the next
\begin{corol} We can form the following complex  of vector spaces
\[
\xymatrix{
\CS'_{\bu}:& \ker(P_0)\ar[r]^{d'_{0}}&\dots \ar[r]&\ker (P_q)\ar[r]^{d'_{q}}&\ker(P_{q+1})\ar[r]& \dots \ar[r]^{d'_{n-1}}&\ker (P_n),
}
\]
where $d'_q$ is the restriction of $d_{q,M}$ to $\ker(P^q)$. 
\end{corol}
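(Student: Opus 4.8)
The plan is to verify the two defining properties of a cochain complex: that $d'_q$ maps $\ker(P_q)$ into $\ker(P_{q+1})$, and that $d'_{q+1}\circ d'_q=0$. The first property is exactly the content of Lemma \ref{l0}: if $u\in\ker(P_q)$, then $d_{q,M}u\in\DS(P_{q+1})$ and $P_{q+1}(d_{q,M}u)=0$, so the restriction $d'_q:=d_{q,M}|_{\ker(P_q)}$ indeed takes values in $\ker(P_{q+1})$. This shows each arrow in the diagram is well defined as a linear map between the stated vector spaces.

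For the second property, let $u\in\ker(P_q)$. Since $\ker(P_q)\subset\DS(d_{q,M})$ and, by Lemma \ref{l0}, $d_{q,M}u\in\ker(P_{q+1})\subset\DS(d_{q+1,M})$, the composition $d_{q+1,M}(d_{q,M}u)$ makes sense, and it vanishes because $\CS_{\bu,M}$ is a complex, i.e. $d_{q+1,M}\circ d_{q,M}=0$ on $\DS(d_{q,M})$. Hence $d'_{q+1}(d'_q u)=d_{q+1,M}(d_{q,M}u)=0$, which is the cochain condition. Linearity of each $d'_q$ is immediate since it is the restriction of the linear operator $d_{q,M}$ to a linear subspace (that $\ker(P_q)$ is a linear subspace follows from $P_q$ being a linear operator).

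There is essentially no obstacle here: the statement is a formal consequence of Lemma \ref{l0} together with the fact that $\CS_{\bu,M}$ is already a complex. The only point worth a word of care is bookkeeping of domains — one must confirm at each stage that the vectors on which $d_{q,M}$ (and then $P_{q+1}$, and then $d_{q+1,M}$) are applied actually lie in the relevant domains — but all of this is handled by the inclusions $\ker(P_q)\subset\DS(d_{q,M})$ and $d_{q,M}(\ker(P_q))\subset\ker(P_{q+1})$ supplied by Lemma \ref{l0}. Thus the corollary follows at once, and no further analysis (closedness, density, or Hodge-theoretic input) is needed for this purely algebraic assertion.
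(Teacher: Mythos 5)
Your proof is correct and takes the same route as the paper, which presents this corollary as an immediate consequence of Lemma \ref{l0} (well-definedness of $d'_q$) combined with the relation $d_{q+1,M}\circ d_{q,M}=0$ inherited from $\CS_{\bu,M}$. The domain bookkeeping you spell out is exactly what makes the statement go through, and no further input is needed.
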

We refer to the above complex as the {\em complex of harmonic vectors} of the pair $(\CS_{\bu,M},\CS_{\bu,m})$.
%\textcolor{blue}{Aggiungere qualche commento?}
%Note that $d'_q$ is not necessarily closed, nor densely defined. 

\begin{defi} We call the cohomology of $\CS'_{\bu}$:
\begin{equation}
\label{homology}
H^q(\CS'_{\bu})=\frac{\ker (d'_q:\ker P_q\to \ker P_{q+1})}{\Im (d'_{q-1}:\ker P_{q-1}\to \ker P_{q})}
\end{equation}
 the cohomology of the complex of harmonic vectors of the pair $(\CS_{\bu,M},\CS_{\bu,m})$.
\end{defi}

%\textcolor{blue}{Aggiungere qualche commento?}

\section{The complex of relative harmonic vectors}
\label{s3}
In this section we construct another complex from $\ker(P_q)$. We follows the lines of the previous section but we replace $\CS_{\bu,M}$ with $\CS_{\bu,m}$.

\begin{lem}\label{l0-b} For each $q$, if $u\in \ker (P_q)\cap \DS(d_{q,m})$, then $d_{q,m} u\in \ker (P_{q+1})\cap \DS(d_{q+1,m})$.
\end{lem}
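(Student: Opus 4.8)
The plan is to mimic the proof of Lemma \ref{l0} while carrying along the extra membership $u\in\DS(d_{q,m})$ and producing the conclusion $d_{q,m}u\in\DS(d_{q+1,m})$ together with $d_{q,m}u\in\ker(P_{q+1})$. First I would record the identity coming from $u\in\ker(P_q)$, namely
\[
\de_{q+1,M}(d_{q,M}u)=-d_{q-1,M}(\de_{q,M}u).
\]
Since $d_{q,M}$ extends $d_{q,m}$, on the vector $u\in\DS(d_{q,m})$ we have $d_{q,M}u=d_{q,m}u$, so the left-hand side is $\de_{q+1,M}(d_{q,m}u)$. Now I would invoke the relation between the two complexes at the level of adjoints: recall $\de_{q+1,M}=d^*_{q,m}$ and $\de_{q+1,m}=d^*_{q,M}$, so $\de_{q+1,m}$ is a closed extension of $\de_{q+1,M}$. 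Hence $d_{q,m}u\in\DS(\de_{q+1,M})\subset\DS(\de_{q+1,m})$ automatically, and $\de_{q+1,m}(d_{q,m}u)=\de_{q+1,M}(d_{q,m}u)=-d_{q-1,M}(\de_{q,M}u)$.

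The key step — the one I expect to be the only real content — is to upgrade $d_{q,m}u\in\DS(d_{q+1,m})$, i.e.\ to show that $d_{q,m}u$ lies in the \emph{minimal} domain in degree $q+1$, not merely in $\DS(d_{q+1,M})$ (which is what Lemma \ref{l0} alone gives, via $\de_{q+1,M}(d_{q,m}u)\in\DS(d_{q,M})$). The natural route is: $d_{q,m}u$ belongs to $\ker(d_{q+1,M})$, since $d_{q+1,M}(d_{q,m}u)=d_{q+1,M}(d_{q,M}u)=0$ by $d_{q+1,M}\circ d_{q,M}=0$; and $d_{q,m}u$ lies in the closure of $\Im(d_{q,m})$ because $d_{q,m}u$ is literally in the image of $d_{q,m}$ applied to the element $u\in\DS(d_{q,m})$. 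By the weak Hodge decomposition \eqref{weak} applied to the complex $\CS_{\bu,m}$ (whose codifferential in degree $q+1$ is $d^*_{q,m}=\de_{q+1,M}$), a vector that is in $\ker(d_{q+1,m})\cap\overline{\Im(d_{q,m})}$ — once one knows it is killed by $d_{q+1,m}$ — is orthogonal to $\ker(d_{q+1,m})^{\perp}$; but more directly, since $\overline{\Im(d_{q,m})}\subseteq \ker(d_{q+1,m})$ and $d_{q,m}u\in\Im(d_{q,m})$, we get $d_{q,m}u\in\DS(d_{q+1,m})$ and $d_{q+1,m}(d_{q,m}u)=0$ for free, directly from $d_{q,m}\circ d_{q-1,m}$-type reasoning, i.e.\ from the complex property $d_{q,m}(\DS(d_{q,m}))\subseteq\DS(d_{q+1,m})$ which is part of the definition of a Hilbert complex. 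So in fact this step is immediate from the axioms: applying $d_{q,m}$ to $u\in\DS(d_{q,m})$ lands in $\DS(d_{q+1,m})$ by definition of a Hilbert complex.

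It remains to check $d_{q,m}u\in\ker(P_{q+1})$, i.e.\ $d_{q,m}u\in\DS(P_{q+1})$ and $P_{q+1}(d_{q,m}u)=0$. For the domain condition one needs $d_{q,m}u\in\DS(d_{q+1,M})\cap\DS(\de_{q+1,M})$ with $d_{q+1,M}(d_{q,m}u)\in\DS(\de_{q+2,M})$ and $\de_{q+1,M}(d_{q,m}u)\in\DS(d_{q,M})$. The first two intersection memberships were established above ($d_{q,m}u\in\DS(d_{q+1,M})$ since $d_{q,m}\subset d_{q,M}$, and $d_{q,m}u\in\DS(\de_{q+1,M})$ from the $\ker(P_q)$ identity); $d_{q+1,M}(d_{q,m}u)=0\in\DS(\de_{q+2,M})$ trivially; and $\de_{q+1,M}(d_{q,m}u)=-d_{q-1,M}(\de_{q,M}u)\in\DS(d_{q,M})$ because $\de_{q,M}u\in\DS(d_{q-1,M})$ (as $u\in\DS(P_q)$) forces $d_{q-1,M}(\de_{q,M}u)$ into $\DS(d_{q,M})$ by the complex property of $\CS_{\bu,M}$. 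Finally, exactly as in Lemma \ref{l0},
\[
P_{q+1}(d_{q,m}u)=d_{q,M}\bigl(\de_{q+1,M}(d_{q,m}u)\bigr)=-d_{q,M}\bigl(d_{q-1,M}(\de_{q,M}u)\bigr)=0,
\]
using $d_{q,M}\circ d_{q-1,M}=0$ and noting that the term $\de_{q+1,M}d_{q+1,M}(d_{q,m}u)$ vanishes since $d_{q+1,M}(d_{q,m}u)=0$. This completes the proof; essentially the only thing beyond Lemma \ref{l0} is the bookkeeping that the minimal-domain membership propagates, which is built into the definition of a Hilbert complex.
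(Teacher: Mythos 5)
Your proof is correct and follows essentially the same route as the paper's (which is just the argument of Lemma \ref{l0} with $d_{q,M}u=d_{q,m}u$ substituted and the observation, implicit in the paper, that $d_{q,m}(\DS(d_{q,m}))\subseteq\DS(d_{q+1,m})$ is part of the definition of a Hilbert complex). One side remark is backwards, though harmlessly so: since $d_{q,m}\subset d_{q,M}$, taking adjoints gives $d^*_{q,M}\subset d^*_{q,m}$, i.e.\ $\de_{q+1,M}=d^*_{q,m}$ is the \emph{extension} of $\de_{q+1,m}=d^*_{q,M}$, so the inclusion $\DS(\de_{q+1,M})\subset\DS(\de_{q+1,m})$ you assert is false in general; fortunately nothing in your argument uses membership in $\DS(\de_{q+1,m})$ --- the domain conditions for $P_{q+1}$ only involve $\de_{q+1,M}$, which you obtain correctly from the $\ker(P_q)$ identity.
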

\begin{proof}  Since $u\in \ker (P_q)$, it follows that
\[
\de_{q+1,M} (d_{q,M} u)=-d_{q-1,M}( \de_{q,M} u).
\]

Consequently,  $\de_{q+1,M} (d_{q,m} u)\in \DS(d_{q,M})$ which in turn implies that $d_{q,m}u\in \DS(P_{q+1})$. Finally note that $$P_{q+1}(d_{q,m}u)=d_{q,M}(\de_{q+1,M} (d_{q,m} u))=-d_{q,M}(d_{q-1,M}( \de_{q,M} u))=0$$ as required.
\end{proof}

%\begin{corol} We can form the following complex  of vector spaces
%\[
%\xymatrix{
%\CS''_{\bu}:& \ker(P_0)\cap \DS(d_{0,m})\ar[r]^{d''_{0}}&\dots \ar[r]&\ker (P_q)\cap \DS(d_{q,m})\ar[r]^{d''_{q}}&\ker(P_{q+1})\cap \DS(d_{q+1,m})\ar[r]& \dots \ar[r]^{d''_{n-1}}&\ker (P_n)\cap \DS(d_{n,m}),
%}
%\]
%where $\DS(d''_q)=\ker P_q\cap \DS(d_{q,m})$. 
%\end{corol}

\begin{corol} We can form the following complex  of vector spaces
\[
\xymatrix{
\CS''_{\bu}:& \dots \ar[r]&\ker (P_q)\cap \DS(d_{q,m})\ar[r]^{d''_{q}}&\ker(P_{q+1})\cap \DS(d_{q+1,m})\ar[r]& \dots 
}
\]
where $\DS(d''_q)=\ker P_q\cap \DS(d_{q,m})$. 
\end{corol}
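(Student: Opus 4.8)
The plan is to verify the two defining properties of a cochain complex of vector spaces: that each arrow maps its source into its target, and that the composite of two consecutive arrows is zero. The cleanest way to phrase the result is that $\CS''_\bu$ is the subcomplex of the Hilbert complex $\CS_{\bu,m}$ whose degree-$q$ term is the subspace $\ker P_q\cap\DS(d_{q,m})\subseteq\DS(d_{q,m})$, equipped with $d''_q$ the restriction of $d_{q,m}$.

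First I would check well-definedness of the arrows. Since $\CS_{\bu,m}$ is a Hilbert complex, $d_{q,m}$ maps $\DS(d_{q,m})$ into $\DS(d_{q+1,m})$, so for $u\in\DS(d''_q)=\ker P_q\cap\DS(d_{q,m})$ we already have $d_{q,m}u\in\DS(d_{q+1,m})$; Lemma \ref{l0-b} upgrades this to $d_{q,m}u\in\ker P_{q+1}\cap\DS(d_{q+1,m})=\DS(d''_{q+1})$. Hence $d''_q$ is a well-defined linear map $\DS(d''_q)\to\DS(d''_{q+1})$, being the restriction of the linear operator $d_{q,m}$.

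Second, I would note that $d''_{q+1}\circ d''_q=0$ is inherited directly from $\CS_{\bu,m}$: on $\DS(d''_q)\subseteq\DS(d_{q,m})$ one has $d''_{q+1}\circ d''_q=d_{q+1,m}\circ d_{q,m}=0$. This establishes that $\CS''_\bu$ is a complex.

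There is essentially no obstacle here: the only nontrivial ingredient is the invariance of the kernels $\ker P_q$ under $d_{q,m}$, which is precisely the content of Lemma \ref{l0-b}, and the relation $d^2=0$ comes for free from $\CS_{\bu,m}$. The single point deserving a line of care is the domain bookkeeping — confirming $\DS(d''_q)\subseteq\DS(d_{q,m})$ and that the image lands inside $\DS(d_{q+1,m})$ — together with the observation that, exactly as in the corollary for $\CS'_\bu$, we are constructing a complex of abstract vector spaces, so no closedness of the $d''_q$ needs to be verified.
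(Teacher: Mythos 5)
Your proposal is correct and is exactly the argument the paper intends: the corollary is stated as an immediate consequence of Lemma \ref{l0-b}, which supplies the only nontrivial point (that $d_{q,m}$ preserves $\ker P_q\cap\DS(d_{q,m})$), while $d''_{q+1}\circ d''_q=0$ is inherited from the Hilbert complex $\CS_{\bu,m}$. Your extra care with the domain bookkeeping matches the paper's (implicit) reasoning, so there is nothing to add.
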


Analogously to the previous section we refer to the above complex as the {\em complex of relative harmonic vectors} of the pair $(\CS_{\bu,M},\CS_{\bu,m})$

%\textcolor{blue}{Aggiungere qualche commento?}
%Note that $d'_q$ is not necessarily closed, nor densely defined. 

\begin{defi} We call the cohomology of $\CS''_{\bu}$:
\begin{equation}
\label{homology-b}
H^q(\CS''_{\bu})=\frac{\ker (d''_q:\ker (P_q)\cap \DS(d_{q,m})\to \ker (P_{q+1})\cap \DS(d_{q+1,m}))}{\Im (d''_{q-1}:\ker (P_{q-1})\cap \DS(d_{q-1,m})\to \ker (P_q)\cap \DS(d_{q,m}))}
\end{equation}
 the cohomology of the complex of relative harmonic vectors of the pair $(\CS_{\bu,M},\CS_{\bu,m})$.
\end{defi}

%\textcolor{blue}{Aggiungere qualche commento?}

\section{Harmonic cohomology decomposition for general Hilbert complexes}
\label{s4}

In this section we give the first structure results concerning the  cohomology of the complex of harmonic vectors of the pair  $(\CS_{\bu,M},\CS_{\bu,m})$. Consider the vector spaces defined in \eqref{homology}
\[
H^q(\CS'_{\bu})=\frac{\ker (d'_q:\ker (P_q)\to \ker (P_{q+1}))}{\Im (d'_{q-1}:\ker (P_{q-1})\to \ker (P_{q}))}
\]
and let
\beq\label{b1}
\HF^q=\left(\ker (d_{q,M})\cap \ker (\de_{q,m})\right)\oplus \overline{\Im (d_{{q-1},M})} \oplus \overline{\Im (\de_{q+1,m})}
\eeq

\noindent be the Hodge decomposition induced by the  complex $(\DS(d_{\bu,M}),d_{\bu,M})$, see  \eqref{weak}. Let $\omega\in \ker(d'_q)$ and let $\omega=\omega_1+\omega_2+\omega_3$ be the decomposition of $\omega$ w.r.t. \eqref{b1}, with $\omega_1\in \ker (d_{q,M})\cap \ker( \de_{q,m})$, $\omega_2\in \overline{\Im (d_{{q-1},M})}$ and  $\omega_3\in  \overline{\Im (\de_{q+1,m})}$. First note that $\ker(d'_q)=\ker(P_q)\cap \ker(d_{q,M})$ and that $\omega_3=0$, as $\omega\in \ker(P_q)\cap \ker(d_{q,M})$ and $\ker(d_{q,M})$  is orthogonal to $\overline{\Im(\de_{q+1,m})}$. Note in addition that $$\left(\ker (d_{q,M})\cap \ker (\de_{q,m})\right)\subset \left(\ker(P_q)\cap \ker(d_{q,M})\right)$$ and thus $$\omega_2\in \ker(P_q)\cap \ker(d_{q,M})\cap \overline{\Im(d_{q-1,M})}=\ker(P_q)\cap\overline{\Im(d_{q-1,M})}.$$ Therefore, so far, we showed that $$\ker(P_q)\cap \ker(d_{q,M})\subset (\ker (d_{q,M})\cap \ker (\de_{q,m}))\oplus \left(\overline{\Im(d_{q-1,M})}\cap\ker(P_q)\right).$$ On the other hand it is clear that if $\omega_1$ and $\omega_2$ are arbitrarily fixed forms lying in $\ker (d_{q,M})\cap \ker (\de_{q,m})$ and $\overline{\Im(d_{q-1,M})}\cap\ker(P_q)$, respectively, then $\omega_1+\omega_2\in \ker(P_q)\cap \ker(d_{q,M})$ and so we can conclude that $$\ker(P_q)\cap \ker(d_{q,M})\supset (\ker d_{q,M}\cap \ker \de_{q,m})\oplus \left(\overline{\Im(d_{q-1,M})}\cap\ker(P_q)\right).$$

Summarising we proved that $$\ker(P_q)\cap \ker(d_{q,M})= \left(\ker( d_{q,M})\cap \ker( \de_{q,m})\right)\oplus \left(\overline{\Im(d_{q-1,M})}\cap\ker(P_q)\right)$$ and consequently, by applying the weak Hodge decomposition \eqref{weak}, we get  $$\frac{\ker(P_q)\cap \ker(d_{q,M})}{\Im(d'_{q-1})}\cong (\ker (d_{q,M})\cap \ker (\de_{q,m}))\oplus \frac{\overline{\Im(d_{q-1,M})}\cap\ker(P_q)}{\Im(d'_{q-1})}$$ as $\Im(d'_q)\subset \Im(d_{q,M})$ and $\Im(d_{q,M})$ is orthogonal to $\ker(\de_{q,m})$.  We can summarize the above discussion with the next 

\begin{prop}
\label{firststep}
Let $H^q(\CS'_{\bu})$ be the vector space defined in \eqref{homology}. Then, we have: $$H^q(\CS'_{\bu})\cong (\ker (d_{q,M})\cap \ker( \de_{q,m}))\oplus \frac{\overline{\Im(d_{q-1,M})}\cap\ker(P_q)}{\Im(d'_{q-1})}.$$
\end{prop}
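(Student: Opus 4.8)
The plan is to compute $H^q(\CS'_\bu)$ directly, by first identifying $\ker(d'_q)$ and then analysing how $\Im(d'_{q-1})$ sits inside it. Since $d'_q$ is by definition the restriction of $d_{q,M}$ to $\ker(P_q)$, the first step is the trivial but essential observation that
\[
\ker(d'_q)=\ker(P_q)\cap\ker(d_{q,M}),
\]
so the whole problem reduces to dissecting this intersection. I would then feed each $\omega\in\ker(P_q)\cap\ker(d_{q,M})$ through the weak Hodge decomposition \eqref{b1} of $\CS_{\bu,M}$ (which is just \eqref{weak} applied to that complex), writing $\omega=\omega_1+\omega_2+\omega_3$ with $\omega_1\in\ker d_{q,M}\cap\ker\de_{q,m}$, $\omega_2\in\overline{\Im d_{q-1,M}}$ and $\omega_3\in\overline{\Im\de_{q+1,m}}$.

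The next step is to show the three components are constrained as claimed. Since $\de_{q+1,m}=d_{q,M}^{*}$, one has $\overline{\Im\de_{q+1,m}}=(\ker d_{q,M})^{\perp}$, and orthogonality of the decomposition forces $\omega_3=0$. The one point that genuinely needs an argument is the inclusion $\ker d_{q,M}\cap\ker\de_{q,m}\subseteq\ker(P_q)$: because $d_{q-1,M}$ extends $d_{q-1,m}$ we have $\Im d_{q-1,m}\subseteq\Im d_{q-1,M}$, hence $\ker\de_{q,m}=(\overline{\Im d_{q-1,M}})^{\perp}\subseteq(\overline{\Im d_{q-1,m}})^{\perp}=\ker\de_{q,M}$; so a vector in $\ker d_{q,M}\cap\ker\de_{q,m}$ is annihilated by both $d_{q,M}$ and $\de_{q,M}$, lies in $\DS(P_q)$, and is killed by $P_q$. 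Granting this, $\omega_1\in\ker P_q\cap\ker d_{q,M}$, hence $\omega_2=\omega-\omega_1$ lies there as well, and since $\overline{\Im d_{q-1,M}}\subseteq\ker d_{q,M}$ this means $\omega_2\in\overline{\Im d_{q-1,M}}\cap\ker P_q$. The reverse inclusion is immediate, so I obtain the orthogonal direct sum
\[
\ker(P_q)\cap\ker(d_{q,M})=\bigl(\ker d_{q,M}\cap\ker\de_{q,m}\bigr)\oplus\bigl(\overline{\Im d_{q-1,M}}\cap\ker P_q\bigr).
\]

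Finally I would pass to the quotient by $\Im(d'_{q-1})$. By Lemma \ref{l0}, $\Im(d'_{q-1})\subseteq\ker P_q$, and obviously $\Im(d'_{q-1})\subseteq\Im(d_{q-1,M})\subseteq\overline{\Im d_{q-1,M}}$, so $\Im(d'_{q-1})$ is contained entirely in the second summand above (equivalently, $\Im(d_{q-1,M})\perp\ker\de_{q,m}$). Hence the first summand passes untouched to the quotient while the second contributes $(\overline{\Im d_{q-1,M}}\cap\ker P_q)/\Im(d'_{q-1})$, which gives exactly the asserted isomorphism. The main obstacle is not analytic but bookkeeping around the mixed operator $P_q$: one must keep straight which adjoint is which ($\de_{q,M}=d_{q-1,m}^{*}$ against $\de_{q,m}=d_{q-1,M}^{*}$) and track carefully which inclusions of kernels and images are forced by the extension relation between $\CS_{\bu,m}$ and $\CS_{\bu,M}$; once that is set up, everything rests only on orthogonality relations already encoded in the weak Hodge decomposition.
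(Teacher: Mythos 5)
Your proof is correct and follows essentially the same route as the paper: identify $\ker(d'_q)=\ker(P_q)\cap\ker(d_{q,M})$, run it through the weak Hodge decomposition \eqref{b1}, kill the $\overline{\Im(\de_{q+1,m})}$ component by orthogonality, and observe that $\Im(d'_{q-1})$ sits entirely inside the summand $\overline{\Im(d_{q-1,M})}\cap\ker(P_q)$ before passing to the quotient. Your explicit justification of the inclusion $\ker(d_{q,M})\cap\ker(\de_{q,m})\subseteq\ker(P_q)$ via $\ker(\de_{q,m})\subseteq\ker(\de_{q,M})$ is a detail the paper asserts without proof, and it is argued correctly.
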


Observe now that $\ker (P_q)\cap \overline{\Im(d_{q-1,M})}$ is contained in the domain of $\de_{q,M}$, since the domain of $P_q$ is so. Therefore we can introduce the following map
\[
\de'_{q,M}:\ker(P_q)\cap 
\overline{\Im (d_{q-1,M})} \to \HF_{q-1},
\]
which is simply defined as the restriction of $\de_{q,M}$ on $\ker(P_q)\cap \overline{\Im(d_{q-1,M})}$. We have the following 

\begin{prop}
\label{morphism}
The map $\de'_{q,M}:\ker (P_q)\cap 
\overline{\Im (d_{q-1,M})} \to \HF_{q-1}$ induces a morphism 
$$\hat \de_{q}:\frac{\ker (P_q)\cap 
\overline{\Im (d_{q-1,M})}}{\Im(d'_{q-1})}
\to \frac{\ker(d_{q-1,M})}{\Im(d_{q-2,M})}.$$
\end{prop}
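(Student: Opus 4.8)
The plan is to show that $\de'_{q,M}$ descends to the indicated quotients, which amounts to checking two things: that the image of $\de'_{q,M}$ lands in $\ker(d_{q-1,M})$, and that it kills $\Im(d'_{q-1})$ so that the induced map on the quotient is well defined. For the first point, let $\om\in\ker(P_q)\cap\overline{\Im(d_{q-1,M})}$. By Lemma \ref{l0.1} we have $\de_{q,M}\om\in\ker(P_{q-1})$, and since $\om\in\ker(P_q)$ the identity $d_{q-1,M}(\de_{q,M}\om)=-\de_{q+1,M}(d_{q,M}\om)$ holds; but $\om\in\overline{\Im(d_{q-1,M})}$ together with the Hodge decomposition forces $\om\in\ker(\de_{q+1,m})^{\perp}$-type considerations to give $d_{q,M}\om=0$ — more precisely, $\overline{\Im(d_{q-1,M})}\subseteq\ker(d_{q,M})$ since $d_{q,M}\circ d_{q-1,M}=0$ and $d_{q,M}$ is closed, hence $d_{q,M}\om=0$ and therefore $d_{q-1,M}(\de_{q,M}\om)=0$, i.e. $\de'_{q,M}\om\in\ker(d_{q-1,M})$. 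Thus $\de'_{q,M}$ maps into the numerator of the target quotient.

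Next I would check that $\de'_{q,M}$ sends $\Im(d'_{q-1})$ into $\Im(d_{q-2,M})$. Take $\eta=d'_{q-1}u=d_{q-1,M}u$ with $u\in\ker(P_{q-1})$; note $\eta$ indeed lies in $\overline{\Im(d_{q-1,M})}$ and in $\ker(P_q)$ by Lemma \ref{l0}, so it is a legitimate argument for $\de'_{q,M}$. Now $\de_{q,M}\eta=\de_{q,M}d_{q-1,M}u$, and using $u\in\ker(P_{q-1})$, i.e. $d_{q-2,M}\de_{q-1,M}u+\de_{q,M}d_{q-1,M}u=0$, we get $\de_{q,M}\eta=-d_{q-2,M}(\de_{q-1,M}u)\in\Im(d_{q-2,M})$. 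Hence the class of $\de'_{q,M}\eta$ in $\ker(d_{q-1,M})/\Im(d_{q-2,M})$ is zero, and $\hat\de_q$ is well defined on the quotient by $\hat\de_q([\om])=[\de_{q,M}\om]$. Linearity is immediate since $\de_{q,M}$ is linear.

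The main obstacle is really the first step: one must be careful that $\om\in\overline{\Im(d_{q-1,M})}$, not merely $\om\in\Im(d_{q-1,M})$, still yields $d_{q,M}\om=0$. This follows because $d_{q,M}$ is a closed operator vanishing on the dense-in-$\overline{\Im(d_{q-1,M})}$ subspace $\Im(d_{q-1,M})$, so its closed graph forces it to vanish on the closure; alternatively, $\overline{\Im(d_{q-1,M})}$ is orthogonal to $\overline{\Im(\de_{q+1,M})}$ and to $\ker(d_{q,M})\cap\ker(\de_{q,M})$ in the weak Hodge decomposition \eqref{weak}, which identifies $\overline{\Im(d_{q-1,M})}$ as a subspace of $\ker(d_{q,M})$. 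Everything else is a direct manipulation of the two defining identities of $\ker(P_\bu)$ together with Lemmas \ref{l0} and \ref{l0.1}, so no further subtlety arises. I would also remark in passing that $\hat\de_q$ is the natural candidate for assembling the isomorphism of Theorem \ref{THint}, which motivates recording it here.
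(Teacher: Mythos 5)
Your proposal is correct and follows essentially the same route as the paper: you show $\de_{q,M}\om\in\ker(d_{q-1,M})$ by combining $P_q\om=0$ with the observation $\overline{\Im(d_{q-1,M})}\subseteq\ker(d_{q,M})$, and you verify well-definedness on the quotient by computing $\de_{q,M}(d_{q-1,M}v)=-d_{q-2,M}(\de_{q-1,M}v)$ for $v\in\ker(P_{q-1})$. The extra care you take in justifying $d_{q,M}\om=0$ on the closure of the image is a worthwhile explicit remark, but the argument is the same as the paper's.
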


\begin{proof}
Let $u\in \ker(P_q)\cap
\overline{\Im (d_{q-1,M})} $. Since $u\in  \ker (P_q)\cap \ker(d_{q,M})$, we have
\[
0=P_q u=d_{q-1,M}(\de_{q,M}u),
\]
and hence, $\de_{q,M}u \in \ker( d_{q-1,M})$. Assume now that  $u\in \Im (d'_{q-1})$, then $u=d_{q-1,M} v$, for some $v\in \ker(P_{q-1})$. Since $P_{q-1} v=0$, we have that $d_{q-2,M}(\de_{q-1,M} v)=-\de_{q,M}( d_{q-1,M} v)$, and therefore $\de_{q,M}u=\de_{q,M}(d_{q-1,M} v)=-d_{q-2,M}(\de_{q-1,M}v)$ which allows us to conclude that $\de_{q,M}u\in \Im (d_{q-2,M}$).
\end{proof}

We conclude this section with the following lemma that we believe having a per se  interest because it shows a Hodge decomposition that to the best of our knowledge is new.

\begin{lem}
\label{new-Hodge-weak}
Let $\CS_{\bu,M}$ be a weak Fredholm complex. Then, the following {Hodge decomposition} holds true:
$$\HF_q=\left(\ker(d_{q,m})\cap\ker(\de_{q,m})\right)\oplus \left(\overline{\Im(d_{q-1,M})}+\overline{\Im(\de_{q+1,M})}\right).$$
\end{lem}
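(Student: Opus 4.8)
The plan is to show that the two subspaces on the right are orthogonal complements of one another and that the second one is in fact \emph{closed}; granted this, the asserted identity is just the orthogonal splitting of $\HF_q$ along a closed subspace. Abbreviate $R:=\overline{\Im(d_{q-1,M})}$, $S:=\overline{\Im(\de_{q+1,M})}$, $S_0:=\overline{\Im(\de_{q+1,m})}$ and $W:=R+S$. First I would do the easy bookkeeping: since $\de_{q,m}=d^*_{q-1,M}$, the identity $\ker(T^*)=(\overline{\Im T})^\perp$ gives $\ker(\de_{q,m})=R^\perp$; and since $d_{q,m}$ is closed and densely defined, $d_{q,m}=(d^*_{q,m})^*=(\de_{q+1,M})^*$, so likewise $\ker(d_{q,m})=S^\perp$. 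As $R$ and $S$ are subspaces, $(R+S)^\perp=R^\perp\cap S^\perp$, hence $\ker(d_{q,m})\cap\ker(\de_{q,m})=W^\perp$. Thus the lemma reduces to the single claim that $W$ is closed, because then $\HF_q=W\oplus W^\perp$ is precisely the stated decomposition.

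The heart of the argument, and the only place the weak Fredholm hypothesis enters, is the closedness of $W$. Here I would use the weak Hodge decomposition of $\CS_{\bu,M}$ itself, i.e. \eqref{b1}, which reads $\HF_q=H\oplus R\oplus S_0$ (an orthogonal direct sum) with $H:=\ker(d_{q,M})\cap\ker(\de_{q,m})$; by \eqref{weak1} and weak Fredholmness, $H\cong\bar H^q(\CS_{\bu,M})$ is finite-dimensional. The elementary but essential observation is that $S_0\subseteq S$: from $d_{q,m}\subset d_{q,M}$ one gets the \emph{reversed} inclusion $\de_{q+1,m}=d^*_{q,M}\subset d^*_{q,m}=\de_{q+1,M}$ for the adjoints, hence $\Im(\de_{q+1,m})\subseteq\Im(\de_{q+1,M})$, so $S_0\subseteq S$. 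Now let $P$ be the orthogonal projection of $\HF_q$ onto $R^\perp=H\oplus S_0$. Because $R$ is closed, $W=R\oplus P(S)$. Splitting $S=S_0\oplus(S\ominus S_0)$: on $S_0\subseteq R^\perp$ the map $P$ is the identity, while $S\ominus S_0\subseteq S_0^\perp=H\oplus R$, so $P$ sends $S\ominus S_0$ into $H$. Hence $P(S)=S_0\oplus F$ with $F\subseteq H$ finite-dimensional, and therefore $W=R\oplus S_0\oplus F$ — an orthogonal sum of the closed subspace $R\oplus S_0$ with a finite-dimensional subspace, which is closed.

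Combining the two steps, $\HF_q=W\oplus W^\perp=\bigl(\overline{\Im(d_{q-1,M})}+\overline{\Im(\de_{q+1,M})}\bigr)\oplus\bigl(\ker(d_{q,m})\cap\ker(\de_{q,m})\bigr)$, which is the claim. The main obstacle is exactly the closedness of $W$: the sum of two closed but non-orthogonal subspaces need not be closed, so one genuinely must use finite-dimensionality of the reduced cohomology of $\CS_{\bu,M}$; the delicate point is to apply the Hodge decomposition \eqref{b1} to the \emph{right} complex and to get the adjoint inclusion the right way round, so that $S_0\subseteq S$ (and not the reverse) — that is what makes the ``defect'' $F$ land inside the finite-dimensional space $H$.
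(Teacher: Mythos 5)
Your proof is correct and follows essentially the same route as the paper: both arguments use the weak Hodge decomposition of $\CS_{\bu,M}$ together with the inclusion $\overline{\Im(\de_{q+1,m})}\subseteq\overline{\Im(\de_{q+1,M})}$ to show that $\overline{\Im(d_{q-1,M})}+\overline{\Im(\de_{q+1,M})}$ differs from the closed subspace $\overline{\Im(d_{q-1,M})}\oplus\overline{\Im(\de_{q+1,m})}$ only by a finite-dimensional piece inside the (finite-dimensional) reduced harmonic space, hence is closed, and then identify its orthogonal complement with $\ker(d_{q,m})\cap\ker(\de_{q,m})$. Your explicit projection argument producing the defect $F\subseteq H$ is just a more hands-on version of the paper's $X\cap\bigl(\overline{\Im(d_{q-1,M})}+\overline{\Im(\de_{q+1,M})}\bigr)$.
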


\begin{proof}
Let us denote with $X$ the orthogonal complement of $\overline{\Im(d_{q-1,M})}\oplus\overline{\Im(\de_{q+1,m})}$. Since $\CS_{\bu,M}$ is a weak-Fredholm complex, we know that  $\overline{\Im(d_{q-1,M})}\oplus\overline{\Im(\de_{q+1,m})}$ is a closed subspace of $\HF_q$ and $X$ has finite dimension. Moreover, $\overline{\Im(d_{q-1,M})}\oplus\overline{\Im(\de_{q+1,m})}\subset \overline{\Im(d_{q-1,M})}+\overline{\Im(\de_{q+1,M})}$. This tells us that we have the following orthogonal decomposition for $\overline{\Im(d_{q-1,M})}+\overline{\Im(\de_{q+1,M})}$: 
\begin{multline}
\nonumber \overline{\Im(d_{q-1,M})}+\overline{\Im(\de_{q+1,M})}=\\
\left(\overline{\Im(d_{q-1,M})}\oplus\overline{\Im(\de_{q+1,m})}\right)\oplus \left(X\cap \left(\overline{\Im(d_{q-1,M})}+\overline{\Im(\de_{q+1,M})}\right)\right).
\end{multline}
Note that $X \cap \left(\overline{\Im(d_{q-1,M})}+\overline{\Im(\de_{q+1,M})}\right)$ is a finite dimensional vector space because $X$ is so. Therefore, $X \cap \left(\overline{\Im(d_{q-1,M})}+\overline{\Im(\de_{q+1,M})}\right)$ is closed in $\HF_q$ and so we can conclude that $\overline{\Im(d_{q-1,M})}+\overline{\Im(\de_{q+1,M})}$ is also closed in $\HF_q$ because it is the sum of two mutually orthogonal closed subspace of $\HF_q$. Finally, since it is  immediate to verify that 
\[
\left(\ker(d_{q,m})\cap\ker(\de_{q,m})\right)^{\bot}=\overline{\overline{\Im(d_{q-1,M})}+\overline{\Im(\de_{q+1,M})}}=\overline{\Im(d_{q-1,M})}+\overline{\Im(\de_{q+1,M})},
\] 
and  analogously 
\[
\left(\overline{\Im(d_{q-1,M})}+\overline{\Im(\de_{q+1,M})}\right)^{\bot}=\ker(d_{q,m})\cap\ker(\de_{q,m}),
\]
we can conclude that the stated orthogonal decomposition  holds true. 
\end{proof}

\section{Relative harmonic cohomology decomposition for general Hilbert complexes}
\label{s5}

Analogously to the previous section we start to describe the structure of the cohomology of the complex of relative harmonic vectors. Consider the vector space defined in \eqref{homology-b}
\[
H^q(\CS''_{\bu})=\frac{\ker (d''_q:\ker (P_q)\cap \DS(d_{q,m})\to \ker (P_{q+1})\cap \DS(d_{q+1,m}))}{\Im (d''_{q-1}:\ker (P_{q-1})\cap \DS(d_{q-1,m})\to \ker (P_q)\cap \DS(d_{q,m}))}
\]

\noindent and let

\beq\label{b1-b}
\HF^q=\left(\ker (d_{q,m})\cap \ker (\de_{q,M})\right)\oplus \overline{\Im (d_{{q-1},m})} \oplus \overline{\Im (\de_{q+1,M})}
\eeq
be the Hodge decomposition induced by the  complex $(\DS(d_{\bu,m}),d_{\bu,m})$, see \eqref{weak}. Let $\omega\in \ker(d''_q)$ and let $\omega=\omega_1+\omega_2+\omega_3$ be the decomposition of $\omega$ w.r.t. \eqref{b1-b}, with $\omega_1\in \ker (d_{q,m})\cap \ker( \de_{q,M})$, $\omega_2\in \overline{\Im (d_{{q-1},m})}$ and  $\omega_3\in  \overline{\Im (\de_{q+1,M})}$. 
First note that $\ker(d''_q)=\ker(P_q)\cap \ker(d_{q,m})$ and that $\omega_3=0$, since $\omega\in \ker(P_q)\cap \ker(d_{q,m})$ and $\ker(d_{q,m})$  is orthogonal to $\overline{\Im(\de_{q+1,M})}$. 
Note in addition that $$\left(\ker (d_{q,m})\cap \ker (\de_{q,M})\right)\subset \left(\ker(P_q)\cap \ker(d_{q,m})\right)$$ and thus 
$$\omega_2\in \ker(P_q)\cap \ker(d_{q,m})\cap \overline{\Im(d_{q-1,m})}=\ker(P_q)\cap\overline{\Im(d_{q-1,m})}.$$ 
Therefore, we can deduce that 
$$\ker(P_q)\cap \ker(d_{q,m})\subset (\ker (d_{q,m})\cap \ker (\de_{q,M}))\oplus \left(\overline{\Im(d_{q-1,m})}\cap\ker(P_q)\right).$$ 
Conversely, if $\omega_1$ and $\omega_2$ are arbitrarily fixed forms lying in $\ker (d_{q,m})\cap \ker (\de_{q,M})$ and $\overline{\Im(d_{q-1,m})}\cap\ker(P_q)$, respectively, then $\omega_1+\omega_2\in \ker(P_q)\cap \ker(d_{q,m})$ and so we can conclude that $$\ker(P_q)\cap \ker(d_{q,m})\supset (\ker d_{q,m}\cap \ker \de_{q,M})\oplus \left(\overline{\Im(d_{q-1,m})}\cap\ker(P_q)\right).$$
So far we proved that 
$$\ker(P_q)\cap \ker(d_{q,m})= \left(\ker( d_{q,m})\cap \ker( \de_{q,M})\right)\oplus \left(\overline{\Im(d_{q-1,m})}\cap\ker(P_q)\right)$$ and consequently, by applying the weak Hodge decomposition \eqref{weak}, we get  
$$\frac{\ker(P_q)\cap \ker(d_{q,m})}{\Im(d''_{q-1})}\cong (\ker (d_{q,m})\cap \ker (\de_{q,M}))\oplus \frac{\overline{\Im(d_{q-1,m})}\cap\ker(P_q)}{\Im(d''_{q-1})}$$ since $\Im(d''_q)\subset \Im(d_{q,m})$ and $\Im(d_{q,m})$ is orthogonal to $\ker(\de_{q,M})$. Summarizing we have

\begin{prop}
\label{firststep-b}
Let $H^q(\CS''_{\bu})$ be the vector space defined in \eqref{homology}. Then, we have: $$H^q(\CS''_{\bu})\cong (\ker (d_{q,m})\cap \ker (\de_{q,M}))\oplus \frac{\overline{\Im(d_{q-1,m})}\cap\ker(P_q)}{\Im(d''_{q-1})}.$$
\end{prop}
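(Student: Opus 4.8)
The plan is to mimic, step by step, the computation carried out before Proposition~\ref{firststep} in Section~\ref{s4}, exchanging the roles of $\CS_{\bu,M}$ and $\CS_{\bu,m}$ and working with the weak Hodge decomposition \eqref{b1-b} attached to $(\DS(d_{\bu,m}),d_{\bu,m})$ in place of \eqref{b1}. The first point to record is that the kernel of $d''_q$ does not feel the domain restriction $\DS(d_{q,m})$: since $d''_q$ is the restriction of $d_{q,m}$ to $\ker(P_q)\cap\DS(d_{q,m})$ and any vector annihilated by $d_{q,m}$ automatically lies in $\DS(d_{q,m})$, one has $\ker(d''_q)=\ker(P_q)\cap\ker(d_{q,m})$, so the numerator of $H^q(\CS''_{\bu})$ is exactly $\ker(P_q)\cap\ker(d_{q,m})$.

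Next I would prove the orthogonal splitting
\[
\ker(P_q)\cap\ker(d_{q,m})=\bigl(\ker(d_{q,m})\cap\ker(\de_{q,M})\bigr)\oplus\bigl(\overline{\Im(d_{q-1,m})}\cap\ker(P_q)\bigr).
\]
For ``$\subseteq$'', given $\omega\in\ker(P_q)\cap\ker(d_{q,m})$ I would decompose $\omega=\omega_1+\omega_2+\omega_3$ as in \eqref{b1-b}; since $\de_{q+1,M}=d_{q,m}^*$ one has $\overline{\Im(\de_{q+1,M})}\perp\ker(d_{q,m})$, so pairing $\omega$ with $\omega_3$ and using $\omega_3\perp\omega_1,\omega_2$ forces $\omega_3=0$. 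I would then check $\ker(d_{q,m})\cap\ker(\de_{q,M})\subseteq\ker(P_q)$: such an $\omega_1$ lies in $\DS(d_{q,M})\cap\DS(\de_{q,M})$, satisfies $d_{q,M}\omega_1=d_{q,m}\omega_1=0\in\DS(\de_{q+1,M})$ and $\de_{q,M}\omega_1=0\in\DS(d_{q-1,M})$, hence $\omega_1\in\DS(P_q)$ with $P_q\omega_1=0$. Therefore $\omega_2=\omega-\omega_1\in\ker(P_q)$, and because $\omega_2\in\overline{\Im(d_{q-1,m})}\subseteq\ker(d_{q,m})$ this is the required decomposition. The inclusion ``$\supseteq$'' is immediate from linearity of $P_q$ and $d_{q,m}$ together with $\overline{\Im(d_{q-1,m})}\subseteq\ker(d_{q,m})$, and the sum is orthogonal because $\ker(\de_{q,M})=\ker(d_{q-1,m}^*)=\bigl(\overline{\Im(d_{q-1,m})}\bigr)^{\bot}$.

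Finally I would pass to the quotient by $\Im(d''_{q-1})$. By Lemma~\ref{l0-b} we have $\Im(d''_{q-1})\subseteq\ker(P_q)$, and trivially $\Im(d''_{q-1})\subseteq\Im(d_{q-1,m})\subseteq\overline{\Im(d_{q-1,m})}$; since $\Im(d_{q-1,m})\perp\ker(d_{q-1,m}^*)=\ker(\de_{q,M})$, the subspace $\Im(d''_{q-1})$ sits entirely inside the second summand above. Quotienting the orthogonal direct sum therefore leaves the first summand intact and yields
\[
H^q(\CS''_{\bu})=\frac{\ker(P_q)\cap\ker(d_{q,m})}{\Im(d''_{q-1})}\cong\bigl(\ker(d_{q,m})\cap\ker(\de_{q,M})\bigr)\oplus\frac{\overline{\Im(d_{q-1,m})}\cap\ker(P_q)}{\Im(d''_{q-1})}.
\]
The step I expect to demand the most care is showing $\ker(d_{q,m})\cap\ker(\de_{q,M})\subseteq\DS(P_q)$ with $P_q$ vanishing on it, since this is where the maximal- and minimal-type domains interact; everything else is bookkeeping with the standard orthogonality relations between ranges and kernels of mutually adjoint operators.
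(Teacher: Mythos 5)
Your proposal is correct and follows essentially the same route as the paper: decompose a vector of $\ker(d''_q)=\ker(P_q)\cap\ker(d_{q,m})$ via the weak Hodge decomposition \eqref{b1-b} of the complex $(\DS(d_{\bu,m}),d_{\bu,m})$, kill the $\overline{\Im(\de_{q+1,M})}$-component by orthogonality, obtain the splitting $\ker(P_q)\cap\ker(d_{q,m})=(\ker(d_{q,m})\cap\ker(\de_{q,M}))\oplus(\overline{\Im(d_{q-1,m})}\cap\ker(P_q))$, and pass to the quotient by $\Im(d''_{q-1})$, which lies in the second summand. Your explicit verifications that $\ker(d_{q,m})\cap\ker(\de_{q,M})\subseteq\ker(P_q)$ and that the domain restriction does not affect $\ker(d''_q)$ are details the paper leaves implicit, but the argument is the same.
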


Consider now the following map
\[
\de''_{q,M}:\ker(P_q)\cap 
\overline{\Im (d_{q-1,m})} \to \HF_{q-1},
\]
which is simply defined as the restriction of $\de_{q,M}$ on $\ker(P_q)\cap \overline{\Im(d_{q-1,m})}$. We have the following 

\begin{prop}
\label{morphism-b}
The map $\de''_{q,M}:\ker (P_q)\cap 
\overline{\Im (d_{q-1,m})} \to \HF_{q-1}$ induces a morphism $$\check \de_q:\frac{\ker (P_q)\cap 
\overline{\Im (d_{q-1,m})}}{\Im(d''_{q-1})}
\to \frac{\ker(d_{q-1,M})}{\Im(d_{q-2,M})}.$$
\end{prop}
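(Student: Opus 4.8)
$\textbf{Proof plan.}$ The statement to prove is Proposition~\ref{morphism-b}: the map $\de''_{q,M}$, restricted to $\ker(P_q)\cap\overline{\Im(d_{q-1,m})}$, descends to a morphism $\check\de_q$ on the quotient by $\Im(d''_{q-1})$, landing in $H^{q-1}(\CS_{\bu,M})=\ker(d_{q-1,M})/\Im(d_{q-2,M})$. The structure of the argument should mirror exactly the proof of Proposition~\ref{morphism} in the previous section, only with the roles of the domains adjusted: the source side now uses the minimal differential $d_{\bu,m}$ (so elements live in $\overline{\Im(d_{q-1,m})}$ and in $\DS(d_{\bu,m})$), while the target remains the cohomology of $\CS_{\bu,M}$, since $\de_{q,M}=d_{q-1,m}^*$ carries things that are expressible via $d_{q-1,M}$, not $d_{q-1,m}$.

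$\textbf{Step 1: well-definedness of the image.}$ Let $u\in\ker(P_q)\cap\overline{\Im(d_{q-1,m})}$. First I would observe that such $u$ lies in $\DS(\de_{q,M})$ because $\DS(P_q)\subset\DS(\de_{q,M})$, so $\de_{q,M}u$ is defined. Next, since $u\in\overline{\Im(d_{q-1,m})}\subset\overline{\Im(d_{q-1,M})}$, $u$ is orthogonal to $\ker(\de_{q,m})$, and in particular (as in Section~\ref{s4}) $u\in\ker(d_{q,M})$ — this uses the weak Hodge decomposition \eqref{weak} applied to $\CS_{\bu,M}$, together with $u\in\ker(P_q)$; indeed $\ker(P_q)\cap\overline{\Im(d_{q-1,M})}\subset\ker(d_{q,M})$ as was already used implicitly. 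Then from $0=P_qu=d_{q-1,M}(\de_{q,M}u)+\de_{q+1,M}(d_{q,M}u)=d_{q-1,M}(\de_{q,M}u)$ we get $\de_{q,M}u\in\ker(d_{q-1,M})$, so $\de_{q,M}u$ represents a class in $H^{q-1}(\CS_{\bu,M})$.

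$\textbf{Step 2: passage to the quotient.}$ I then need to check that $\de'' _{q,M}(\Im(d''_{q-1}))\subset\Im(d_{q-2,M})$. Take $u=d_{q-1,m}v$ with $v\in\ker(P_{q-1})\cap\DS(d_{q-1,m})$ (this is the domain of $d''_{q-1}$; note $u$ then automatically lies in $\Im(d_{q-1,m})\subset\overline{\Im(d_{q-1,m})}$, consistent with the source space). From $P_{q-1}v=0$ we have $d_{q-2,M}(\de_{q-1,M}v)=-\de_{q,M}(d_{q-1,M}v)$. The subtlety here — and the one place I expect to have to be slightly careful, rather than merely transcribe the previous proof — is that $u=d_{q-1,m}v$, whereas the identity involves $d_{q-1,M}v$; but since $d_{q-1,M}$ extends $d_{q-1,m}$ and $v\in\DS(d_{q-1,m})$, we have $d_{q-1,M}v=d_{q-1,m}v=u$, so $\de_{q,M}u=\de_{q,M}(d_{q-1,M}v)=-d_{q-2,M}(\de_{q-1,M}v)\in\Im(d_{q-2,M})$. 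Hence the class of $\de_{q,M}u$ in $H^{q-1}(\CS_{\bu,M})$ vanishes, and $\check\de_q$ is well defined. The map is obviously linear since $\de_{q,M}$ is, so nothing further is needed.

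$\textbf{Main obstacle.}$ There is no real obstacle: the proof is essentially a verbatim adaptation of Proposition~\ref{morphism}, the only genuine point requiring attention being the bookkeeping in Step~2 of which differential ($d_{q-1,m}$ versus $d_{q-1,M}$) appears where, and the remark that on $\DS(d_{q-1,m})$ these agree. One should also double-check in Step~1 that $u\in\ker(P_q)\cap\overline{\Im(d_{q-1,m})}$ does imply $u\in\ker(d_{q,M})$ — this follows because $u\in\overline{\Im(d_{q-1,M})}$ forces $\de_{q+1,M}(d_{q,M}u)=-d_{q-1,M}\de_{q,M}u\in\overline{\Im(d_{q-1,M})}$ while also $d_{q,M}u\in\ker(d_{q+1,M})$ gives, via the weak Hodge decomposition for $\CS_{\bu,M}$, that $d_{q,M}u$ is orthogonal to $\overline{\Im(\de_{q+2,m})}$, and combined with $P_qu=0$ one concludes $d_{q,M}u=0$; alternatively this is already recorded in Section~\ref{s4}.
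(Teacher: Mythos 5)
Your proof is correct and follows the paper's argument essentially verbatim: one shows $\de_{q,M}u\in\ker(d_{q-1,M})$ from $0=P_qu=d_{q-1,M}(\de_{q,M}u)$, and then checks compatibility with $\Im(d''_{q-1})$ by writing $u=d_{q-1,m}v=d_{q-1,M}v$ and using $P_{q-1}v=0$. The only (cosmetic) difference is in Step 1, where your justification that $u\in\ker(d_{q,M})$ is more roundabout than necessary: it suffices to note that $\overline{\Im(d_{q-1,m})}\subseteq\overline{\Im(d_{q-1,M})}\subseteq\ker(d_{q,M})$ because $\ker(d_{q,M})$ is closed, so neither $\ker(P_q)$ nor the weak Hodge decomposition is needed for that particular point.
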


\begin{proof}
Let $u\in \ker(P_q)\cap\overline{\Im (d_{q-1,m})} $. Since $u\in  \ker (P_q)\cap \ker(d_{q,m})$, we have
\[
0=P_q u=d_{q-1,M}(\de_{q,M}u),
\]
and hence, $\de_{q,M}u \in \ker( d_{q-1,M})$. Assume now that  $u\in \Im (d''_{q-1})$, then $u=d_{q-1,m} v$, for some $v\in \ker(P_{q-1})$. Since $P_{q-1} v=0$, we have that $d_{q-2,M}(\de_{q-1,M} v)=-\de_{q,M}( d_{q-1,M} v)$, and therefore $\de_{q,M}u=\de_{q,M}(d_{q-1,M} v)=-d_{q-2,M}(\de_{q-1,M}v)$, which allows us to conclude that $\de_{q,M}u\in \Im (d_{q-2,M}$).
\end{proof}

\section{Harmonic cohomology decomposition for Fredholm complexes}
\label{s6}

This section contains one of the main result of this paper. We show that  Prop. \ref{firststep} can be significantly refined if assume that $\CS_{\bu,M}$ is  a Fredholm complex. So we make the following

\begin{ass} From now on we assume that $\CS_{\bu,M}$ is  a Fredholm complex.
\end{ass}

First, we start with the following refinement of Lemma \ref{new-Hodge-weak}.

\begin{lem}
\label{newHodge}
The following properties hold true:
\begin{enumerate}
\item We have the following Hodge decomposition $$\HF_q=\left(\ker(d_{q,m})\cap\ker(\de_{q,m})\right)\oplus \left(\Im(d_{q-1,M})+\Im(\de_{q+1,M})\right).$$ In particular $\Im(d_{q-1,M})+\Im(\de_{q+1,M})$ is a closed subspace of $\HF_q$.

\item The range of $P_q:\DS(P_q)\subset\HF_q\rightarrow \HF_q$ satisfies $$\Im(P_q)=\Im(d_{q-1,M})+\Im(\de_{q+1,M}).$$ In particular $P_q$ is a closed range operator. 
\end{enumerate}
\end{lem}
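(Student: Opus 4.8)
The plan is to bootstrap from Lemma \ref{new-Hodge-weak} using the extra strength of the Fredholm hypothesis. First I would observe that since $\CS_{\bu,M}$ is Fredholm, the strong Hodge decomposition recorded in Section \ref{s1} gives that $\Im(d_{q-1,M})$ and $\Im(\de_{q,M})=\Im(d^*_{q-1,m})$ are both closed; more precisely $\Im(d_{q-1,M})$ is closed and, by the general fact that $T$ has closed range iff $T^*$ does, $\Im(\de_{q+1,M})=\Im(d^*_{q,m})$ is closed. Thus the already-proved decomposition of Lemma \ref{new-Hodge-weak}, namely $\HF_q=\left(\ker(d_{q,m})\cap\ker(\de_{q,m})\right)\oplus\left(\overline{\Im(d_{q-1,M})}+\overline{\Im(\de_{q+1,M})}\right)$, immediately rewrites as the claimed decomposition in (1) with the closures removed. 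One still owes the reader the assertion that the sum $\Im(d_{q-1,M})+\Im(\de_{q+1,M})$ is itself closed: this follows because it is a finite-codimensional (indeed it contains the closed finite-codimensional subspace $\Im(d_{q-1,M})\oplus\Im(\de_{q+1,m})$ appearing in the proof of Lemma \ref{new-Hodge-weak}, now with closures removed by Fredholmness) and a finite-codimensional subspace containing a closed subspace is closed; alternatively, one invokes the same mutually-orthogonal-closed-summands argument as in that proof. This disposes of (1).

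For (2), the containment $\Im(P_q)\subseteq\Im(d_{q-1,M})+\Im(\de_{q+1,M})$ is immediate from the formula $P_q s=d_{q-1,M}(\de_{q,M}s)+\de_{q+1,M}(d_{q,M}s)$, since the first summand lies in $\Im(d_{q-1,M})$ and the second in $\Im(\de_{q+1,M})$ whenever $s\in\DS(P_q)$. For the reverse containment, fix $w=d_{q-1,M}a+\de_{q+1,M}b$ with $a\in\DS(d_{q-1,M})$ and $b\in\DS(\de_{q+1,M})$. Using the strong Hodge decomposition for the complex $\CS_{\bu,M}$ one may assume $a\in\Im(\de_{q,M})\subseteq\ker(d_{q-1,M})^{\perp}$-type component, i.e. replace $a$ by its component in $\overline{\Im(\de_{q,M})}=\Im(\de_{q,M})$ (the $\ker d_{q-1,M}$-part of $a$ contributes nothing to $d_{q-1,M}a$); write $a=\de_{q,M}u_1$ with $u_1\in\DS(P_{q-1})$ actually $u_1\in\DS(d_{q-1,M})\cap\DS(\de_{q,M})$ chosen in a harmonic-orthogonal complement so that $u_1\in\DS(P_q)$, and similarly $b=d_{q,M}u_2$ with $u_2$ chosen so that $u_2\in\DS(P_q)$. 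Then one checks $d_{q-1,M}a=d_{q-1,M}\de_{q,M}u_1=P_q u_1-\de_{q+1,M}d_{q,M}u_1$, and if $u_1$ is taken in $\Im(\de_{q,M})$ one further has $d_{q,M}u_1=0$ so $d_{q-1,M}a=P_q u_1$; symmetrically $\de_{q+1,M}b=P_q u_2$ with $u_2$ in the $d_{q,M}$-exact part, so $w=P_q(u_1+u_2)\in\Im(P_q)$. Once equality is established, (1) shows $\Im(P_q)$ is closed.

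The main obstacle is the bookkeeping in the reverse inclusion of (2): one must produce genuine domain elements $u_1,u_2$ lying in $\DS(P_q)$ whose images under $P_q$ recover the prescribed pieces $d_{q-1,M}a$ and $\de_{q+1,M}b$, and the natural preimages must be chosen in the \emph{orthogonal complement of the harmonic space} (equivalently inside $\Im(\de_{q,M})$ and $\Im(d_{q,M})$ respectively) so that the cross terms $\de_{q+1,M}d_{q,M}u_1$ and $d_{q-1,M}\de_{q,M}u_2$ vanish; verifying that these choices indeed land in $\DS(P_q)$ is exactly where one uses that $\CS_{\bu,M}$ is Fredholm (so the relevant ranges are closed and the Hodge decomposition is the strong one, allowing the preimages to be taken in the closed image subspaces and hence in the domains of the next operator in the chain). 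Everything else is formal: containments, orthogonality of images and kernels, and the elementary fact that a finite-codimensional subspace containing a closed subspace is closed.
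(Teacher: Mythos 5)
Your overall strategy coincides with the paper's: for (1), upgrade Lemma \ref{new-Hodge-weak} using closedness supplied by Fredholmness, and for (2), produce a $P_q$-preimage of each summand by applying the strong Hodge decomposition twice. However, two steps as written are wrong. In (1) you claim that $\Im(\de_{q+1,M})=\Im(d_{q,m}^{*})$ is closed by the closed range theorem; that would require $\Im(d_{q,m})$ to be closed, i.e.\ Fredholmness of $\CS_{\bu,m}$, which is \emph{not} assumed here (only $\CS_{\bu,M}$ is Fredholm --- this is precisely the subtlety the lemma is designed to circumvent, which is why only the \emph{sum} is asserted to be closed). Your fallback --- the sum contains the closed, finite-codimensional subspace $\Im(d_{q-1,M})\oplus\Im(\de_{q+1,m})$, hence is closed, and then equals $\left(\ker(d_{q,m})\cap\ker(\de_{q,m})\right)^{\bot}$ --- is correct and is exactly the paper's argument, so (1) survives, but only via the fallback and not by ``removing the closures'' term by term from Lemma \ref{new-Hodge-weak}.

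In (2) the exact subspaces in which you place the preimages are swapped (and there are degree slips). To realize $d_{q-1,M}a$ as $P_q u_1$ you need the cross term $\de_{q+1,M}(d_{q,M}u_1)$ to vanish, so $u_1$ must be taken in $\ker(d_{q,M})$, i.e.\ in $\Im(d_{q-1,M})$ --- not in ``$\Im(\de_{q,M})$'' as you write, which lives in degree $q-1$ and whose elements are in any case not $d_{q,M}$-closed; dually, the preimage of the $\de_{q+1,M}b$ piece must lie in the $\de$-exact part, not in ``the $d_{q,M}$-exact part''. Moreover the intermediate representatives must be taken in $\Im(\de_{q,m})$ (minimal), since the strong Hodge decompositions available under Fredholmness of $\CS_{\bu,M}$ only control $\Im(\de_{\bu,m})$, not $\Im(\de_{\bu,M})$. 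The paper's chain is: write $d_{q-1,M}a=d_{q-1,M}\beta_2$ with $\beta_2\in\DS(d_{q-1,M})\cap\Im(\de_{q,m})$, then $\beta_2=\de_{q,m}\beta_3$ with $\beta_3\in\Im(d_{q-1,M})\cap\DS(\de_{q,m})$, and finally check $\beta_3\in\DS(P_q)$ with $P_q\beta_3=d_{q-1,M}a$. With these corrections your argument becomes the paper's.
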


\begin{proof}
The first item above follows by adopting the same strategy used in the proof of Lemma \ref{new-Hodge-weak}. Indeed, let $X$ be the orthogonal complement of $\Im(d_{q-1,M})\oplus\Im(\de_{q+1,m})$. Since $\CS_{\bu,M}$ is a Fredholm complex, we know that  $\Im(d_{q-1,M})\oplus\Im(\de_{q+1,m})$ is a closed subspace of $\HF_q$, $X$ has finite dimension, and moreover $\Im(d_{q-1,M})\oplus\Im(\de_{q+1,m})\subset \Im(d_{q-1,M})+\Im(\de_{q+1,M})$. Therefore, we have the following orthogonal decomposition for $\Im(d_{q-1,M})+\Im(\de_{q+1,M})$: 
\begin{multline}
\nonumber \Im(d_{q-1,M})+\Im(\de_{q+1,M})=\\\left(\Im(d_{q-1,M})\oplus\Im(\de_{q+1,m})\right)\oplus \left(X\cap \left(\Im(d_{q-1,M})+\Im(\de_{q+1,M})\right)\right).\end{multline}
Note that 
$X \cap \left(\Im(d_{q-1,M})+\Im(\de_{q+1,M})\right)$ is a finite dimensional vector space because $X$ is so. Therefore, $X \cap \left(\Im(d_{q-1,M})+\Im(\de_{q+1,M})\right)$ is closed in $\HF_q$, and  we can conclude that $\Im(d_{q-1,M})+\Im(\de_{q+1,M})$ is also closed in $\HF_q$, because it is the sum of two mutually orthogonal closed subspace of $\HF_q$. Finally, given that $$\left(\ker(d_{q,m})\cap\ker(\de_{q,m})\right)^{\bot}=\overline{\Im(d_{q-1,M})+\Im(\de_{q+1,M})}=\Im(d_{q-1,M})+\Im(\de_{q+1,M})$$ and  analogously $$\left(\Im(d_{q-1,M})+\Im(\de_{q+1,M})\right)^{\bot}=\ker(d_{q,m})\cap\ker(\de_{q,m})$$ we can conclude that the stated orthogonal decomposition holds true. Let us deal know with the second part. It suffices to show that $\Im(P_q)\supset (\Im(d_{q-1,M})+\Im(\de_{q+1,M}))$, because the other inclusion, namely  $\Im(P_q)\subset (\Im(d_{q-1,M})+\Im(\de_{q+1,M}))$, it is clear. Let $\beta_1\in \Im(d_{q-1,M})$. Since $\CS_{\bu,M}$ is a Fredholm complex, we can decompose $\HF_{q-1}$ as $$\HF_{q-1}=(\ker(d_{q-1,M})\cap \ker(\de_{q-1,m}))\oplus \Im(d_{q-2,M})\oplus \Im(\de_{q,m}).$$ 

It is therefore clear that there exists $\beta_2\in \DS(d_{q-1,M})\cap \Im(\de_{q,m})$ such that $d_{q-1,M}\beta_2=\beta_1$.  By applying the Hodge decomposition to $\HF_q$, $$\HF_q=(\ker(d_{q,M})\cap \ker(\de_{q,m}))\oplus \Im(d_{q-1,M})\oplus \Im(\de_{q+1,m})$$ we obtain the existence of an element $\beta_3\in \Im(d_{q-1,M})\cap \DS(\de_{q,m})$ such that $\de_{q,m}\beta_3=\beta_2$. Let $\beta_4\in \DS(d_{q-1,M})$ be such that $d_{q-1,M}\beta_4=\beta_3$. Summarizing we have $$\beta_1=d_{q-1,M}(\de_{q,m}(d_{q-1,M}\beta_4))=P_q(d_{q-1,M}\beta_4)$$ and so we can conclude that $\Im(d_{q-1,M})\subset \Im(P_q)$. A completely analogous argument proves that $\Im(\de_{q+1,M})\subset \Im(P_q)$. We can thus conclude that $$\Im(P_q)\supset (\Im(d_{q-1,M})+\Im(\de_{q+1,M}))$$ and eventually that $$\Im(P_q)= (\Im(d_{q-1,M})+\Im(\de_{q+1,M}))$$ as required.
\end{proof}

Let us now consider the vector space $$\frac{\ker(P_q)\cap \Im(d_{q-1,M})}{\Im(d'_{q-1})}\oplus \mathcal{H}^{q-1}_{m}(\CS_{\bu,M},\CS_{\bu,m})$$
with $$\mathcal{H}^{q-1}_{m}(\CS_{\bu,M},\CS_{\bu,m}):=\ker(d_{q-1,m})\cap \ker(\de_{q-1,m}).$$ 

Note that $$\mathcal{H}^{q-1}_{m}(\CS_{\bu,M},\CS_{\bu,m})\subset \ker(d_{q-1,M})\cap\ker(\de_{q-1,m})$$ and, since $(\CS_{q,M})$ is a Fredholm complex, we get an inclusion $$\mathcal{H}^{q-1}_{m}(\CS_{\bu,M},\CS_{\bu,m})\hookrightarrow H^{q-1}(\CS_{\bu,M})$$ given by $$v\mapsto [v]$$ with $[v]$ the class induced by $v$ in $H^q(\CS_{\bu,M})$. Let us define now $$\tilde{\de}_q:\frac{\ker(P_q)\cap \Im(d_{q-1,M})}{\Im(d'_{q-1})}\oplus \mathcal{H}^{q-1}_{m}(\CS_{\bu,M},\CS_{\bu,m})\rightarrow H^{q-1}(\CS_{\bu,M})$$ as  the morphism given by  
\begin{equation}
\label{def}
\tilde{\de_q}([u]\oplus v):=\hat{\de}_{q}([u])+[v]
\end{equation}
 for any $[u]\in \frac{\ker(P_q)\cap \Im(d_{q-1,M})}{\Im(d'_{q-1})}$ and $v\in \mathcal{H}^{q-1}_{m}(\CS_{\bu,M},\CS_{\bu,m})$, and where $\hat{\de}_q$ is the morphism introduced in Prop. \ref{morphism}. Note that by Prop. \ref{morphism} $\hat{\de}_{q}([u])$ is a cohomology class in $H^{q-1}(\CS_{\bu,M})$ and, by the above discussion, $[v]$ is also a  cohomology class in $H^{q-1}(\CS_{\bu,M})$. Hence on the right-hand side of \eqref{def} we have the sum of two cohomology classes in $H^{q-1}(\CS_{\bu,M})$. The next theorem is the crucial step toward the main result of this section. 

\begin{theo}
\label{crucial}
Let $\CS_{\bu,M}$ and $\CS_{\bu,m}$ be two Hilbert complexes as in \eqref{M}-\eqref{m} with  $\CS_{\bu,M}$ a Fredholm complex. Then the morphism of vector spaces:
$$\tilde{\de}_q:\frac{\ker(P_q)\cap \Im(d_{q-1,M})}{\Im(d'_{q-1})}\oplus \mathcal{H}^{q-1}_{m}(\CS_{\bu,M},\CS_{\bu,m})\rightarrow H^{q-1}(\CS_{\bu,M})$$ defined above is an isomorphism.
\end{theo}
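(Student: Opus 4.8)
The plan is to prove that $\tilde{\de}_q$ is both injective and surjective, handling each separately. The key input will be Lemma \ref{newHodge}, which tells us that $\Im(P_q)=\Im(d_{q-1,M})+\Im(\de_{q+1,M})$ is closed, together with the strong Hodge decompositions available for the Fredholm complex $\CS_{\bu,M}$. The map $\hat{\de}_q$ from Prop. \ref{morphism} is induced by $\de_{q,M}$ on $\ker(P_q)\cap\Im(d_{q-1,M})$, so understanding its kernel and image on the nose is the heart of the matter; the summand $\mathcal{H}^{q-1}_m$ will precisely correct for the defect.

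\textbf{Surjectivity.} First I would show $\tilde{\de}_q$ is onto. Fix a class $[\alpha]\in H^{q-1}(\CS_{\bu,M})$ with $\alpha\in\ker(d_{q-1,M})$; since $\CS_{\bu,M}$ is Fredholm we may take $\alpha\in\ker(d_{q-1,M})\cap\ker(\de_{q-1,m})$, i.e. $\alpha$ harmonic for $\CS_{\bu,M}$ (using $\de_{q-1,m}=d_{q-2,M}^*$). Now decompose $\alpha$ using the new Hodge decomposition of Lemma \ref{newHodge} applied in degree $q-1$: $\HF_{q-1}=(\ker(d_{q-1,m})\cap\ker(\de_{q-1,m}))\oplus(\Im(d_{q-2,M})+\Im(\de_{q,M}))$. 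Write $\alpha=v+\beta$ accordingly, with $v\in\mathcal H^{q-1}_m$ and $\beta\in\Im(d_{q-2,M})+\Im(\de_{q,M})$. Since $\alpha$ and $v$ both lie in $\ker(d_{q-1,M})\cap\ker(\de_{q-1,m})$, so does $\beta$; but $\beta\in\ker(d_{q-1,M})$ forces its $\Im(\de_{q,M})$-component to vanish (orthogonality), hence $\beta\in\ker(P_{q-1})\cap\Im(d_{q-2,M})$, and $\beta$ represents a class $[\beta]\in\frac{\ker(P_{q-1})\cap\Im(d_{q-2,M})}{\Im(d'_{q-2})}$. Wait --- the domain of $\tilde\de_q$ involves $\ker(P_q)\cap\Im(d_{q-1,M})$, so I must instead produce $u\in\ker(P_q)\cap\Im(d_{q-1,M})$ with $\de_{q,M}u=\beta$ (mod $\Im(d_{q-2,M})$). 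Using $\Im(P_q)=\Im(d_{q-1,M})+\Im(\de_{q+1,M})$ from Lemma \ref{newHodge}, and the fact that $d_{q-1,M}\beta=0$ with $\beta\in\Im(d_{q-2,M})$... actually the cleanest route: since $\beta\in\Im(d_{q-2,M})$ we can (by Fredholmness, exactly as in the proof of Lemma \ref{newHodge}) write $\beta=\de_{q,M}u$ for some $u\in\ker(P_q)\cap\Im(d_{q-1,M})$ --- run the chain $\beta=d_{q-2,M}(\cdots)$ backwards through the Hodge pieces to land on such a $u$. Then $\hat\de_q([u])=[\de_{q,M}u]=[\beta]=[\alpha]-[v]$, so $\tilde\de_q([u]\oplus v)=[\alpha]$.

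\textbf{Injectivity.} Suppose $\tilde\de_q([u]\oplus v)=0$, i.e. $\de_{q,M}u+v=d_{q-2,M}w$ for some $w\in\DS(d_{q-2,M})$, with $u\in\ker(P_q)\cap\Im(d_{q-1,M})$ and $v\in\mathcal H^{q-1}_m$. Project onto the new Hodge decomposition of $\HF_{q-1}$: the term $\de_{q,M}u$ lies in $\Im(\de_{q,M})$, the term $d_{q-2,M}w$ lies in $\Im(d_{q-2,M})$, and $v$ lies in the harmonic summand $\ker(d_{q-1,m})\cap\ker(\de_{q-1,m})$. Since the three pieces $\ker(d_{q-1,m})\cap\ker(\de_{q-1,m})$, $\overline{\Im(d_{q-2,M})}$ and $\overline{\Im(\de_{q,M})}$ are mutually orthogonal in the ordinary weak Hodge decomposition of the complex $\CS_{\bu,M}$ in degree $q-1$, I can read off $v=0$, and also $\de_{q,M}u=d_{q-2,M}w\in\Im(d_{q-2,M})\cap\Im(\de_{q,M})=\{0\}$, so $\de_{q,M}u=0$. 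Combined with $u\in\ker(P_q)\cap\ker(d_{q,M})$ (recall $\Im(d_{q-1,M})\subset\ker(d_{q,M})$), this gives $u\in\ker(d_{q,M})\cap\ker(\de_{q,M})$; but $u$ also lies in $\Im(d_{q-1,M})$, which is orthogonal to $\ker(\de_{q,M})$ (as $\de_{q,M}=d_{q-1,m}^*$ and $\Im(d_{q-1,M})\supset\Im(d_{q-1,m})$; more precisely $u\perp\ker(\de_{q,M})$ because $u\in\overline{\Im(d_{q-1,M})}=\overline{\Im(d_{q-1,m}^{**})}$... I need to be slightly careful here and argue that $\de_{q,M}u=0$ together with $u\in\Im(d_{q-1,M})$ forces $u=0$, which follows since $\ker(\de_{q,M})\perp\overline{\Im(d_{q-1,M})}$). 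Hence $u=0$, so $[u]\oplus v=0$ and $\tilde\de_q$ is injective.

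\textbf{Main obstacle.} The delicate point is surjectivity: producing a genuine $u\in\ker(P_q)\cap\Im(d_{q-1,M})$ (not merely in some closure, and genuinely in the \emph{maximal} domain) whose $\de_{q,M}$-image hits a prescribed exact class in $H^{q-1}(\CS_{\bu,M})$. This is where Fredholmness is used in an essential way --- it guarantees closed ranges so that the backward chain $\beta\rightsquigarrow u$ through the strong Hodge decompositions of $\HF_{q-1}$ and $\HF_q$ actually terminates, exactly in the spirit of the second half of the proof of Lemma \ref{newHodge}. The orthogonality bookkeeping between the ``mixed'' decomposition of Lemma \ref{newHodge} (which mixes $m$ and $M$ boundary conditions) and the pure $M$-decomposition \eqref{b1} must be tracked carefully throughout; this is routine once set up but is where a careless sign or domain confusion would break the argument.
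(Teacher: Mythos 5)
Your reduction of injectivity to the injectivity of $\hat\de_q$ (using Lemma \ref{newHodge} to force $v=0$) matches the paper, and your surjectivity sketch is in the right spirit. However, the core of your injectivity argument contains a genuine error. You assert that $\de_{q,M}u=d_{q-2,M}w$ lies in $\Im(d_{q-2,M})\cap\Im(\de_{q,M})=\{0\}$ because these pieces are ``mutually orthogonal in the ordinary weak Hodge decomposition of $\CS_{\bu,M}$ in degree $q-1$.'' They are not: the decomposition \eqref{weak} for $\CS_{\bu,M}$ in degree $q-1$ pairs $\overline{\Im(d_{q-2,M})}$ with $\overline{\Im(\de_{q,m})}$, where $\de_{q,m}=d_{q-1,M}^{*}$, \emph{not} with $\overline{\Im(\de_{q,M})}=\overline{\Im(d_{q-1,m}^{*})}$. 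The maximal--maximal pair $\Im(d_{q-2,M})$ and $\Im(\de_{q,M})$ is precisely the pair that fails to be orthogonal and can intersect nontrivially; this is why Lemmas \ref{new-Hodge-weak} and \ref{newHodge} are stated with a sum $+$ rather than $\oplus$. Concretely, for any $z\in\ker(P_{q-1})$ one has $\de_{q,M}(d_{q-1,M}z)=-d_{q-2,M}(\de_{q-1,M}z)$, which lies in that intersection and is generically nonzero. Your final step fails for the same reason: $\Im(d_{q-1,M})$ is orthogonal to $\ker(\de_{q,m})$, not to $\ker(\de_{q,M})$ (think of $dh$ for $h$ harmonic on a manifold with boundary: it is exact for the maximal $d$ and killed by the maximal $\de$). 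Indeed your argument would conclude $u=0$, which is too strong to be true: any nonzero $a\in\Im(d'_{q-1})$ satisfies $\de_{q,M}a\in\Im(d_{q-2,M})$, so it meets your hypotheses with $v=0$, yet $a\neq 0$.

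What must actually be shown is the weaker statement $u\in\Im(d'_{q-1})$, i.e.\ $u=d_{q-1,M}z$ for some $z\in\ker(P_{q-1})$, and this is where the real work lies. The paper's proof constructs $z$ explicitly: write $u=d_{q-1,M}\xi$ with $\xi\in\Im(\de_{q,m})$ via the strong Hodge decomposition, observe that $\xi\in\DS(P_{q-1})$ with $P_{q-1}\xi=\de_{q,M}u=d_{q-2,M}c$, normalize $c$ to lie in $\Im(\de_{q-1,m})$ so that $d_{q-2,M}c=P_{q-1}(d_{q-2,M}y)$ for suitable $y$, and set $z=\xi-d_{q-2,M}y$. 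A milder version of the same confusion appears in your surjectivity step: since $\Im(d_{q-2,M})+\Im(\de_{q,M})$ is not a direct sum, ``its $\Im(\de_{q,M})$-component'' is not well defined, and the backward chain producing $u$ is left unexecuted. The paper's device there is to choose $u_2\in\Im(d_{q-1,M})$ via the strong Hodge decomposition of $\HF_q$, write $u_2=d_{q-1,M}\gamma$, and verify directly that $P_qu_2=0$.
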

\begin{proof}
Let $w\in \ker(P_q)\cap \Im(d_{q-1,M})$, $v\in \mathcal{H}^{q-1}_{m}(\CS_{\bu,M},\CS_{\bu,m})$ and $z\in \DS(d_{q-2,M})$ such that $\delta_{q,M}w+v=d_{q-2,M}z$. Thanks to the Hodge decomposition of Lemma \ref{newHodge} we get that $v=0$ and $\delta_{q,M}w=d_{q-2,M}z$. Therefore $\tilde{\de}_{q}$ is injective if and only if $$\hat \de_q:\frac{\ker (P_q)\cap 
\Im (d_{q-1,M})}{\Im(d'_{q-1})}
\to \frac{\ker(d_{q-1,M})}{\Im(d_{q-2,M})}$$ is injective. Let us show now that $\hat{\de}_{q}$ is actually injective.  Take $a\in \ker (P_q)\cap
\Im (d_{q-1,M})$ and assume that $\de_{q,M} a\in \Im( d_{q-2,M})$. Then, we have to show that $a\in \Im (d'_{q-1})$. Since $a\in \Im (d_{q-1,M})$, we may write $a=d_{q-1,M}u$, with $u\in \DS (d_{q-1,M})\cap \Im (\de_{q,m})$ thanks to the Hodge decomposition 
$$
\HF_{q-1}=\left(\ker (d_{q-1,M})\cap \ker( \de_{q-1,m})\right)\oplus \Im( d_{q-2,M}) \oplus \Im (\de_{q,m}).
$$ 
Moreover, by applying again the Hodge decomposition in degree $q$ and arguing in the same manner, we find an element $v\in \DS(\de_{q,m})\cap \Im(d_{q-1,M})$ such that $\delta_{q,m}v=u$ and $d_{q-1,M}(\delta_{q,m}v)=a.$ Note that $u=\delta_{q,m}v\in \DS(P_{q-1})$ and $P_{q-1}u=\delta_{q,M}a$.

By assumption we know that  $d_{q-2,M}c=\de_{q,M}a$   whence  $P_{q-1} u=d_{q-2,M}c$, for some $c\in \DS(d_{q-2,M})$. Using the Hodge decomposition
\[
\HF_{q-2}=\left(\ker (d_{q-2,M})\cap \ker (\de_{q-2,m})\right)\oplus \Im( d_{ q-3,M}) \oplus \Im (\de_{q-1,m}),
\]
%if $c\in \DS(\d_{q-2,M})$ and $c\not=0$, then necessarily,
 we can assume without loss of generality that   
\[
c\in \DS(\d_{q-2,M})\cap \Im \de_{q-1,m}.
\]

Therefore, $c=\de_{q-1,m}x$, for some $x\in\DS(\de_{q-1,m})$. Repeating the same argument, we can find $y\in \DS(\d_{q-2,M})$, such that
\[
c=\de_{q-1,m}x=\de_{q-1,m}(\d_{q-2,M}y).
\]

Since the domain of the minimal extension is contained in the domain of the maximal extension, we have
\[
P_{q-1} u=d_{q-2,M}c=d_{q-2,M}(\de_{q-1,m}(\d_{q-2,M}y))=P_{q-1}(\d_{q-2,M}y).
\]

Set $z=u-\d_{q-2,M}y$. Then, on one side
\[
P_{q-1} z=0,
\]
and on the other
\[
a=d_{q-1,M}u=d_{q-1,M}(u- d_{q-2,M}y)=d_{q-1,M}z,
\]
i.e. $a\in \Im(d'_{q-1})$, as desired. Now we show that $\tilde{\de}_q$ is surjective. Let $u\in \ker(d_{q-1,M})$. Thanks to the Hodge decomposition of Lemma \ref{newHodge}, we can write $u=u_0+d_{q-2,M}u_1+\de_{q,M}u_2$ with $u_0\in \mathcal{H}^{q-1}_{\min}(\CS_{\bu,M},\CS_{\bu,m})$. Moreover, by applying the Hodge decomposition to $\HF_q$: $$\HF_{q}=\left(\ker (d_{q,M})\cap \ker( \de_{q,m})\right)\oplus \Im( d_{q-1,M}) \oplus \Im (\de_{q+1,m})
$$ we obtain that $u_2=d_{q-1,M}\gamma$. Note that $d_{q-1,M}\gamma\in \DS(P_q)$ and $P_q(d_{q-1,M}\gamma)=0$. Indeed, it is obvious that $d_{q-1,M}\gamma\in \ker(d_{q,M})$. Furthermore, $d_{q-1,M}\gamma=u_2\in \DS(\de_{q,M})$ and  $\de_{q,M}(u_2)=u-u_0-d_{q-2,M}u_1\in \ker(d_{q-1,M})$. Therefore, $d_{q-1,M}\gamma\in \DS(P_q)$ and $P_q(d_{q-1,M}\gamma)=d_{q-1,M}(u-u_0-d_{q-2,M}u_1)=0$. Summarizing, we have $$d_{q-1,M}\gamma\in \ker(P_{q})\cap \Im(d_{q-1,M})\quad \mathrm{and}\quad \hat{\de}_q([d_{q-1,M}\gamma])=[u-u_0]\in H^{q-1}(\CS_{\bu,M}).$$ We can thus conclude that $$\tilde{\de_q}([d_{q-1,M}\gamma]\oplus u_0)=[u]\in H^{q-1}(\CS_{\bu,M})$$ and eventually that $$\tilde{\de}_q:\frac{\ker(P_q)\cap \Im(d_{q-1,M})}{\Im(d'_{q-1})}\oplus \mathcal{H}^q_{m}(\CS_{\bu,M},\CS_{\bu,m})\rightarrow H^{q-1}(\CS_{\bu,M})$$ is an isomorphism of vector spaces.
\end{proof}

\begin{theo}
\label{TH}
Let $\CS_{\bu,M}$ and $\CS_{\bu,m}$ be two Hilbert complexes as in \eqref{M}-\eqref{m} with  $\CS_{\bu,M}$ a Fredholm complex. Let $H^{q}(\CS'_{\bu})$ be the cohomology  of the complex of  harmonic vectors of the pair $(\CS_{\bu,M},\CS_{\bu,m})$, see \eqref{homology}. Then, we have an isomorphism of vector spaces
$$H^{q}(\CS'_{\bu})\oplus \mathcal{H}^{q-1}_{m}(\CS_{\bu,M},\CS_{\bu,m})\cong H^{q}(\CS_{\bu,M})\oplus H^{q-1}(\CS_{\bu,M}).$$ 
\end{theo}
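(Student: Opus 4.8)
The plan is to obtain the statement by assembling two results already established, namely Proposition \ref{firststep} and Theorem \ref{crucial}, using the Fredholm hypothesis on $\CS_{\bu,M}$ to sharpen the former. So this theorem is essentially a formal corollary of the crucial Theorem \ref{crucial}, and the heart of the matter has already been dealt with there.

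First I would invoke Proposition \ref{firststep}, which gives
$$H^q(\CS'_{\bu})\cong \bigl(\ker (d_{q,M})\cap \ker( \de_{q,m})\bigr)\oplus \frac{\overline{\Im(d_{q-1,M})}\cap\ker(P_q)}{\Im(d'_{q-1})}.$$
Since $\CS_{\bu,M}$ is Fredholm, $\Im(d_{q-1,M})$ is closed, so $\overline{\Im(d_{q-1,M})}=\Im(d_{q-1,M})$, and moreover the strong Hodge decomposition for the complex $\CS_{\bu,M}$ identifies $\ker (d_{q,M})\cap \ker( \de_{q,m})$ with $H^q(\CS_{\bu,M})$. Hence
$$H^q(\CS'_{\bu})\cong H^q(\CS_{\bu,M})\oplus \frac{\Im(d_{q-1,M})\cap\ker(P_q)}{\Im(d'_{q-1})}.$$

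Next I would take the direct sum of both sides with $\mathcal{H}^{q-1}_{m}(\CS_{\bu,M},\CS_{\bu,m})$ and apply Theorem \ref{crucial}, which asserts precisely that
$$\frac{\ker(P_q)\cap \Im(d_{q-1,M})}{\Im(d'_{q-1})}\oplus \mathcal{H}^{q-1}_{m}(\CS_{\bu,M},\CS_{\bu,m})\cong H^{q-1}(\CS_{\bu,M}).$$
Combining the two displays yields
$$H^q(\CS'_{\bu})\oplus \mathcal{H}^{q-1}_{m}(\CS_{\bu,M},\CS_{\bu,m})\cong H^q(\CS_{\bu,M})\oplus H^{q-1}(\CS_{\bu,M}),$$
which is the claim.

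The genuine content of the theorem lies in Theorem \ref{crucial} — the injectivity and surjectivity of $\tilde{\de}_q$, which in turn rest on the refined Hodge decomposition of Lemma \ref{newHodge} and on the closed-range property of $P_q$. Once that input is granted, the present statement presents no further obstacle; the only point requiring a moment's care is the bookkeeping of the direct summand $\mathcal{H}^{q-1}_{m}$ added on both sides, and observing that all the isomorphisms involved are isomorphisms of vector spaces, which is all that is asserted.
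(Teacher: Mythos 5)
Your proposal is correct and follows exactly the paper's own argument: combine Proposition \ref{firststep} with Theorem \ref{crucial}, using the Fredholm hypothesis to identify $\overline{\Im(d_{q-1,M})}$ with $\Im(d_{q-1,M})$ and $\ker(d_{q,M})\cap\ker(\de_{q,m})$ with $H^q(\CS_{\bu,M})$. Nothing further is needed.
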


\begin{proof}
Thanks to Prop. \ref{firststep} and Th. \ref{crucial}, we have the following chain of isomorphisms:
$$
\begin{aligned}
&H^{q}(\CS'_{\bu})\oplus \mathcal{H}^{q-1}_{m}(\CS_{\bu,M},\CS_{\bu,m})\cong\\
&\cong (\ker (d_{q,M})\cap \ker( \de_{q,m}))\oplus \frac{\overline{\Im(d_{q-1,M})}\cap\ker(P_q)}{\Im(d'_{q-1})}\oplus \mathcal{H}^{q-1}_{m}(\CS_{\bu,M},\CS_{\bu,m})\\
&\cong H^q(\CS_{\bu,M})\oplus H^{q-1}(\CS_{\bu,M}).
\end{aligned}
$$
\end{proof}

Looking at the above theorem, it is clear that $H^{q}(\CS'_{\bu})$ is not fully determined by the cohomology of the complexes $\CS_{\bu,M}$ and $\CS_{\bu,m}$. However, we have upper and lower bounds for $\dim(H^{q}(\CS'_{\bu}))$, that only depend  on the cohomology of $\CS_{\bu,M}$ and $\CS_{\bu,m}$. More precisely, let us denote with $\Im(H^{q}(\CS_{\bu,m})\rightarrow H^{q}(\CS_{\bu,M}))$ the image of $H^{q}(\CS_{\bu,m})$ into $H^{q}(\CS_{\bu,M})$ induced by the inclusion of complexes $i:\CS_{\bu,m}\rightarrow \CS_{\bu,M}$. We then have:

\begin{corol}
\label{finiteness}
In the setting of Th. \ref{TH}, the vector space $H^{q}(\CS'_{\bu})$ is finite dimensional and moreover 
$$\begin{aligned}
\dim(H^{q}(\CS_{\bu,M}))+&\dim(H^{q-1}(\CS_{\bu,M}))-\dim(\Im(H^{q-1}(\CS_{\bu,m})\rightarrow H^{q-1}(\CS_{\bu,M})))\\ 
&\leq\dim(H^{q}(\CS'_{\bu}))\\
&\leq \dim(H^{q}(\CS_{\bu,M}))+\dim(H^{q-1}(\CS_{\bu,M})).
\end{aligned}$$
\end{corol}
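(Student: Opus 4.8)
The whole statement is designed to be a bookkeeping consequence of Theorem~\ref{TH}, so the plan is to extract it from that isomorphism. First I would note that since $\CS_{\bu,M}$ is Fredholm, both $H^{q}(\CS_{\bu,M})$ and $H^{q-1}(\CS_{\bu,M})$ are finite dimensional; hence the right-hand side of the isomorphism in Theorem~\ref{TH} is finite dimensional, and therefore so is its direct summand $H^{q}(\CS'_{\bu})$ on the left. Taking dimensions in that isomorphism gives the identity
$$\dim H^{q}(\CS'_{\bu})=\dim H^{q}(\CS_{\bu,M})+\dim H^{q-1}(\CS_{\bu,M})-\dim \mathcal{H}^{q-1}_{m}(\CS_{\bu,M},\CS_{\bu,m}),$$
so the corollary reduces to the two estimates
$$0\le \dim \mathcal{H}^{q-1}_{m}(\CS_{\bu,M},\CS_{\bu,m})\le \dim\big(\Im(H^{q-1}(\CS_{\bu,m})\to H^{q-1}(\CS_{\bu,M}))\big).$$
The left inequality is trivial and yields immediately the claimed upper bound for $\dim H^{q}(\CS'_{\bu})$.

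For the right inequality, which produces the lower bound, I would argue as follows. Let $i\colon\CS_{\bu,m}\to\CS_{\bu,M}$ be the inclusion of complexes and $i_*\colon H^{q-1}(\CS_{\bu,m})\to H^{q-1}(\CS_{\bu,M})$ the induced map. Any $v\in \mathcal{H}^{q-1}_{m}(\CS_{\bu,M},\CS_{\bu,m})=\ker(d_{q-1,m})\cap\ker(\de_{q-1,m})$ lies in particular in $\ker(d_{q-1,m})$, hence determines a class $[v]_{m}\in H^{q-1}(\CS_{\bu,m})$ which $i_*$ sends to the class $[v]_{M}\in H^{q-1}(\CS_{\bu,M})$; thus the linear map $v\mapsto [v]_{M}$ carries $\mathcal{H}^{q-1}_{m}(\CS_{\bu,M},\CS_{\bu,m})$ into $\Im(i_*)$. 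On the other hand, as already recorded just before Theorem~\ref{crucial}, $\mathcal{H}^{q-1}_{m}(\CS_{\bu,M},\CS_{\bu,m})\subseteq \ker(d_{q-1,M})\cap\ker(\de_{q-1,m})$, and since $\CS_{\bu,M}$ is Fredholm the natural map $\ker(d_{q-1,M})\cap\ker(\de_{q-1,m})\to H^{q-1}(\CS_{\bu,M})$ is an isomorphism; restricting it to $\mathcal{H}^{q-1}_{m}(\CS_{\bu,M},\CS_{\bu,m})$ shows that $v\mapsto[v]_{M}$ is injective there. Combining the two observations gives an injection $\mathcal{H}^{q-1}_{m}(\CS_{\bu,M},\CS_{\bu,m})\hookrightarrow \Im(i_*)$, hence $\dim \mathcal{H}^{q-1}_{m}(\CS_{\bu,M},\CS_{\bu,m})\le \dim\Im(i_*)$, and the lower bound follows by substituting into the dimension identity.

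I do not expect a genuine obstacle: once Theorem~\ref{TH} is available the corollary is essentially a dimension count. The only point needing a moment's care is that the assignment $v\mapsto[v]_{M}$ is simultaneously injective on $\mathcal{H}^{q-1}_{m}(\CS_{\bu,M},\CS_{\bu,m})$ (from Fredholmness of $\CS_{\bu,M}$, giving harmonic representatives in degree $q-1$) and has image inside $\Im(i_*)$ (because every $v$ in that space is already $d_{q-1,m}$-closed, not merely $d_{q-1,M}$-closed), so that the two facts can be juxtaposed to bound $\dim\mathcal{H}^{q-1}_m$ from above by $\dim\Im(i_*)$.
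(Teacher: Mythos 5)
Your proof is correct and follows essentially the same route as the paper: deduce finite dimensionality and the upper bound directly from Theorem~\ref{TH}, then obtain the lower bound by showing $\dim \mathcal{H}^{q-1}_{m}(\CS_{\bu,M},\CS_{\bu,m})\le \dim\Im\bigl(H^{q-1}(\CS_{\bu,m})\to H^{q-1}(\CS_{\bu,M})\bigr)$ via injectivity of $v\mapsto i_*[v]$. The only (immaterial) difference is in how that injectivity is verified: you invoke the harmonic-representative isomorphism $\ker(d_{q-1,M})\cap\ker(\de_{q-1,m})\cong H^{q-1}(\CS_{\bu,M})$ for Fredholm complexes, while the paper argues by contradiction using the orthogonality of $\overline{\Im(d_{q-2,M})}$ and $\ker(\de_{q-1,m})$ — the same underlying Hodge-decomposition fact.
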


\begin{proof}
Since $\CS_{\bu,M}$ is a Fredholm complex, we know that $H^{q}(\CS_{\bu,M})$ is finite dimensional for every $q$ and thus, by Th. \ref{TH}, we know that 
$H^{q}(\CS'_{\bu})$ is finite dimensional and $$\dim(H^{q}(\CS'_{\bu}))\leq\dim(H^{q}(\CS_{\bu,M}))+\dim(H^{q-1}(\CS_{\bu,M})).$$ We are left to show that 
\begin{multline}
\nonumber
\dim(H^{q}(\CS'_{\bu}))\geq\\
 \dim(H^{q}(\CS_{\bu,M}))+\dim(H^{q-1}(\CS_{\bu,M}))-\dim(\Im(H^{q-1}(\CS_{\bu,m})\rightarrow H^{q-1}(\CS_{\bu,M}))).
\end{multline}
 In order to prove the above inequality, we claim that 
\begin{equation}
\label{claim}
\dim(\mathcal{H}^{q-1}_{m}(\CS_{\bu,M},\CS_{\bu,m}))\leq \dim(\Im(H^{q-1}(\CS_{\bu,m})\rightarrow H^{q-1}(\CS_{\bu,M}))).
\end{equation} 
Let us assume  for the moment the validity of \eqref{claim}. By Th. \ref{TH} we have $$\dim(H^{q}(\CS'_{\bu}))=\dim(H^{q}(\CS_{\bu,M}))+\dim(H^{q-1}(\CS_{\bu,M}))-\dim( \mathcal{H}^{q-1}_{m}(\CS_{\bu,M},\CS_{\bu,m}))$$ and consequently 
\begin{multline}
\nonumber
\dim(H^{q}(\CS'_{\bu}))\geq\\
 \dim(H^{q}(\CS_{\bu,M}))+\dim(H^{q-1}(\CS_{\bu,M}))-\dim(\Im(H^{q-1}(\CS_{\bu,m})\rightarrow H^{q-1}(\CS_{\bu,M})))
\end{multline}
 as required. Finally, we  prove \eqref{claim}.\\ First, note that $\Im(H^{q-1}(\CS_{\bu,m})\rightarrow H^{q-1}(\CS_{\bu,M}))$ can be described in the following manner, which is immediate to verify:
\begin{equation}
\label{alternative}
\Im(H^{q-1}(\CS_{\bu,m})\rightarrow H^{q-1}(\CS_{\bu,M}))=\frac{\ker(d_{q-1,m})}{\Im(d_{q-2,M})\cap\DS(d_{q-1,m})}.
\end{equation} 
Now let $0\neq \psi\in \mathcal{H}^{q-1}_{m}(\CS_{\bu,M},\CS_{\bu,m})$, let $[\psi]$ be the corresponding class in $H^{q-1}(\CS_{\bu,m})$, and finally, let $i_*[\psi]$ the image of $[\psi]$ in $H^{q-1}(\CS_{\bu,M})$ through the map $i_*:H^{q-1}(\CS_{\bu,m})\rightarrow H^{q-1}(\CS_{\bu,M})$ induced by the inclusion of complexes.\\ If $i_*[\psi]$ were trivial in $\Im(H^{q-1}(\CS_{\bu,m})\rightarrow H^{q-1}(\CS_{\bu,M}))$ then, by \eqref{alternative}, there would exists $\zeta\in \DS(d_{q-2,M})$ such that $d_{q-2,M}\zeta=\psi$. This is clearly not possible because $\psi\in \ker(\delta_{q-1,m})$, $\overline{\Im(d_{q-2,M})}$ is orthogonal to $\ker(\delta_{q-1,m})$ and we assumed that $0\neq \psi$. Therefore, we can conclude that the map $\mathcal{H}^{q-1}_{m}(\CS_{\bu,M},\CS_{\bu,m})\rightarrow \Im(H^{q-1}(\CS_{\bu,m})\rightarrow H^{q-1}(\CS_{\bu,M}))$ given by $\psi\mapsto i_*[\psi]$ is injective.
\end{proof}

Finally, we have the following corollaries

\begin{corol}
\label{rough-estimate}
In the setting of Th. \ref{TH} we have
$$\dim(H^{q}(\CS_{\bu,M}))\leq\dim(H^{q}(\CS'_{\bu})).$$
\end{corol}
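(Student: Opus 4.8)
The plan is to read the inequality off from the quantitative results already proved, with no new analytic input; in fact the statement is essentially a corollary of Corollary \ref{finiteness}. First I would pass from the isomorphism of Theorem \ref{TH} to the dimension identity
\[
\dim(H^{q}(\CS'_{\bu})) = \dim(H^{q}(\CS_{\bu,M})) + \dim(H^{q-1}(\CS_{\bu,M})) - \dim(\mathcal{H}^{q-1}_{m}(\CS_{\bu,M},\CS_{\bu,m})),
\]
which is legitimate because the Fredholmness of $\CS_{\bu,M}$ forces $H^{q}(\CS_{\bu,M})$ and $H^{q-1}(\CS_{\bu,M})$ (and hence, by Corollary \ref{finiteness}, $H^{q}(\CS'_{\bu})$ and $\mathcal{H}^{q-1}_{m}(\CS_{\bu,M},\CS_{\bu,m})$) to be finite dimensional.

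Next I would bound $\dim(\mathcal{H}^{q-1}_{m}(\CS_{\bu,M},\CS_{\bu,m})) \le \dim(H^{q-1}(\CS_{\bu,M}))$. This is exactly the injectivity of the map $\mathcal{H}^{q-1}_{m}(\CS_{\bu,M},\CS_{\bu,m}) \hookrightarrow H^{q-1}(\CS_{\bu,M})$, $v \mapsto [v]$, recorded right before Theorem \ref{crucial} and whose injectivity is verified inside the proof of Corollary \ref{finiteness}. Equivalently one may argue directly: $\mathcal{H}^{q-1}_{m}(\CS_{\bu,M},\CS_{\bu,m}) = \ker(d_{q-1,m}) \cap \ker(\de_{q-1,m})$ is a linear subspace of $\ker(d_{q-1,M}) \cap \ker(\de_{q-1,m})$, and the strong Hodge decomposition for the Fredholm complex $\CS_{\bu,M}$ identifies the latter with $H^{q-1}(\CS_{\bu,M})$. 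Substituting this inequality into the displayed identity yields $\dim(H^{q}(\CS'_{\bu})) \ge \dim(H^{q}(\CS_{\bu,M}))$.

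Alternatively, and even more quickly, I would just invoke the lower bound of Corollary \ref{finiteness}, namely $\dim(H^{q}(\CS_{\bu,M})) + \dim(H^{q-1}(\CS_{\bu,M})) - \dim(\Im(H^{q-1}(\CS_{\bu,m}) \to H^{q-1}(\CS_{\bu,M})))$, and observe that this image is a subspace of $H^{q-1}(\CS_{\bu,M})$, so the subtracted term is at most $\dim(H^{q-1}(\CS_{\bu,M}))$, leaving $\dim(H^{q}(\CS'_{\bu})) \ge \dim(H^{q}(\CS_{\bu,M}))$. There is no real obstacle here: the whole argument is a one-line dimension count, and the only point that needs a word of care is the finiteness of all the dimensions involved, which is granted for free by the standing assumption that $\CS_{\bu,M}$ is Fredholm.
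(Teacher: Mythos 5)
Your proposal is correct and its second route is exactly the paper's own proof: invoke the lower bound of Corollary \ref{finiteness} and note that the subtracted term $\dim(\Im(H^{q-1}(\CS_{\bu,m})\rightarrow H^{q-1}(\CS_{\bu,M})))$ is at most $\dim(H^{q-1}(\CS_{\bu,M}))$. Your first route is just an equivalent unwinding of the same dimension count via Theorem \ref{TH}, so there is nothing substantively different to compare.
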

\begin{proof}
This follows by Cor. \ref{finiteness} by noticing that $$\dim(H^{q-1}(\CS_{\bu,M}))-\dim(\Im(H^{q-1}(\CS_{\bu,m})\rightarrow H^{q-1}(\CS_{\bu,M})))\geq 0.$$
\end{proof}

\begin{corol}
\label{simply}
In the setting of Th. \ref{TH}, if in addition $\mathcal{H}_m^{q-1}(\CS_{\bu,M},\CS_{\bu,m})=\{0\}$, then, we have $$H^{q}(\CS'_{\bu})\cong H^{q}(\CS_{\bu,M})\oplus H^{q-1}(\CS_{\bu,M}).$$ In particular, the above isomorphism holds true whenever $$\Im(H^{q-1}(\CS_{\bu,m})\rightarrow H^{q-1}(\CS_{\bu,M}))=\{0\}.$$
\end{corol}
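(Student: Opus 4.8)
The plan is to read the statement off directly from Theorem \ref{TH}. That theorem provides an isomorphism of vector spaces
$$H^{q}(\CS'_{\bu})\oplus \mathcal{H}^{q-1}_{m}(\CS_{\bu,M},\CS_{\bu,m})\cong H^{q}(\CS_{\bu,M})\oplus H^{q-1}(\CS_{\bu,M}),$$
so as soon as $\mathcal{H}_m^{q-1}(\CS_{\bu,M},\CS_{\bu,m})=\{0\}$ the left-hand summand disappears and one obtains $H^{q}(\CS'_{\bu})\cong H^{q}(\CS_{\bu,M})\oplus H^{q-1}(\CS_{\bu,M})$ with no further argument.

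For the ``in particular'' clause, I would invoke the inequality \eqref{claim} from the proof of Corollary \ref{finiteness},
$$\dim(\mathcal{H}^{q-1}_{m}(\CS_{\bu,M},\CS_{\bu,m}))\leq \dim(\Im(H^{q-1}(\CS_{\bu,m})\rightarrow H^{q-1}(\CS_{\bu,M}))),$$
which shows that the vanishing of $\Im(H^{q-1}(\CS_{\bu,m})\rightarrow H^{q-1}(\CS_{\bu,M}))$ forces $\mathcal{H}^{q-1}_{m}(\CS_{\bu,M},\CS_{\bu,m})=\{0\}$, thereby reducing this case to the first assertion. Alternatively, one may re-run the last paragraph of the proof of Corollary \ref{finiteness}: a nonzero $\psi\in \mathcal{H}^{q-1}_{m}(\CS_{\bu,M},\CS_{\bu,m})$ would produce a nonzero class $i_*[\psi]$ in $\Im(H^{q-1}(\CS_{\bu,m})\rightarrow H^{q-1}(\CS_{\bu,M}))$, because $\psi\in\ker(\de_{q-1,m})$ while $\overline{\Im(d_{q-2,M})}$ is orthogonal to $\ker(\de_{q-1,m})$, so $\psi$ cannot lie in $\Im(d_{q-2,M})$.

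I do not expect any genuine obstacle: the corollary is a plain specialization of Theorem \ref{TH}, and the only delicate point is making the ``in particular'' implication airtight, which is precisely what \eqref{claim} supplies. It may be worth remarking that under this hypothesis $H^{q}(\CS'_{\bu})$ becomes entirely determined by the cohomology of $\CS_{\bu,M}$, which is the closest our framework comes to the situation studied in \cite{CDGM}.
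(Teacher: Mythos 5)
Your proposal is correct and matches the paper's proof exactly: the authors likewise deduce the first assertion by setting $\mathcal{H}_m^{q-1}(\CS_{\bu,M},\CS_{\bu,m})=\{0\}$ in Theorem \ref{TH}, and obtain the ``in particular'' clause from the inequality \eqref{claim} established in the proof of Corollary \ref{finiteness}.
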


\begin{proof}
This follows by Th. \ref{TH} and \eqref{claim}.
\end{proof}

\begin{corol}
\label{simply-r}
In the setting of Th. \ref{TH}, if in addition:
\begin{enumerate}
\item $H^{q-1}(\CS_{\bu,m})=\{0\}$, then, $H^{q}(\CS'_{\bu})\cong H^{q}(\CS_{\bu,M})\oplus H^{q-1}(\CS_{\bu,M})$;
\item $H^{q-1}(\CS_{\bu,M})=\{0\}$, then, $H^{q}(\CS'_{\bu})\cong H^{q}(\CS_{\bu,M})$.
\end{enumerate}
\end{corol}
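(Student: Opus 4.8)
The plan is to deduce both statements directly from Theorem \ref{TH}, together with the auxiliary facts already established in the proof of Corollary \ref{finiteness}; no new analysis is required, only bookkeeping with the isomorphisms obtained so far.

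For item (1), the point is that the hypothesis $H^{q-1}(\CS_{\bu,m})=\{0\}$ trivially forces $\Im(H^{q-1}(\CS_{\bu,m})\rightarrow H^{q-1}(\CS_{\bu,M}))=\{0\}$, the image of the zero space being zero. Corollary \ref{simply} then applies verbatim and yields $H^{q}(\CS'_{\bu})\cong H^{q}(\CS_{\bu,M})\oplus H^{q-1}(\CS_{\bu,M})$. Equivalently, one may invoke the claim \eqref{claim} to conclude $\mathcal{H}^{q-1}_{m}(\CS_{\bu,M},\CS_{\bu,m})=\{0\}$ and then read the isomorphism off Theorem \ref{TH} directly.

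For item (2), I would first record that the injection $\mathcal{H}^{q-1}_{m}(\CS_{\bu,M},\CS_{\bu,m})\hookrightarrow \Im(H^{q-1}(\CS_{\bu,m})\rightarrow H^{q-1}(\CS_{\bu,M}))$, $\psi\mapsto i_*[\psi]$, exhibited at the end of the proof of Corollary \ref{finiteness} (or, more simply, the chain $\dim\mathcal{H}^{q-1}_{m}(\CS_{\bu,M},\CS_{\bu,m})\le\dim\Im(H^{q-1}(\CS_{\bu,m})\rightarrow H^{q-1}(\CS_{\bu,M}))\le\dim H^{q-1}(\CS_{\bu,M})$ coming from \eqref{claim}), forces $\mathcal{H}^{q-1}_{m}(\CS_{\bu,M},\CS_{\bu,m})=\{0\}$ as soon as $H^{q-1}(\CS_{\bu,M})=\{0\}$. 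Substituting this vanishing, together with $H^{q-1}(\CS_{\bu,M})=\{0\}$, into the isomorphism of Theorem \ref{TH} leaves $H^{q}(\CS'_{\bu})\cong H^{q}(\CS_{\bu,M})$.

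The only mild subtlety, hardly an obstacle, is to keep track that every isomorphism used is genuinely an isomorphism of vector spaces rather than a mere equality of dimensions; this is automatic, since Theorem \ref{TH} and Corollary \ref{simply} are stated as isomorphisms, and the vanishing of a direct summand turns a direct-sum isomorphism into an isomorphism onto the remaining summand.
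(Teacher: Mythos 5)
Your proof is correct and follows essentially the same route as the paper: both hypotheses force $\Im(H^{q-1}(\CS_{\bu,m})\rightarrow H^{q-1}(\CS_{\bu,M}))=\{0\}$, whence $\mathcal{H}^{q-1}_{m}(\CS_{\bu,M},\CS_{\bu,m})=\{0\}$ by the injectivity established in the proof of Corollary \ref{finiteness} (i.e.\ \eqref{claim}), and the conclusions then read off from Theorem \ref{TH}.
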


\begin{proof}
The conclusions follow by Th. \ref{TH} by noticing that both the above assumptions imply the vanishing of $\Im(H^{q-1}(\CS_{\bu,m})\rightarrow H^{q-1}(\CS_{\bu,M}))$ and consequently the vanishing of $\mathcal{H}_m^{q-1}(\CS_{\bu,M},\CS_{\bu,m})$.
\end{proof}

\section{Relative harmonic cohomology decomposition for Fredholm complexes}
\label{s7}

Following the lines of the previous section, this section is devoted to the study of the cohomology of the complex of relative harmonic vectors of the pair $(\CS_{\bu,M},\CS_{\bu,m})$ under the additional assumption that both $\CS_{\bu,M}$ and $\CS_{\bu,m}$ are Fredholm complexes. As we will see in a moment this assumption will make possible to substantially  refine Prop. \ref{morphism-b}. So we start with the following

\begin{ass} From now on we assume that both $\CS_{\bu,M}$ and $\CS_{\bu,m}$  are Fredholm complexes.
\end{ass}

Let us now consider the vector space $$\frac{\ker(P_q)\cap \Im(d_{q-1,m})}{\Im(d''_{q-1})}\oplus \mathcal{H}^{q-1}_{m}(\CS_{\bu,M},\CS_{\bu,m})$$
with 
$$\mathcal{H}^{q-1}_{m}(\CS_{\bu,M},\CS_{\bu,m}):=\ker(d_{q-1,m})\cap \ker(\de_{q-1,m}).$$ 

As above we have the inclusion 
$$\mathcal{H}^{q-1}_{m}(\CS_{\bu,M},\CS_{\bu,m})\hookrightarrow H^{q-1}(\CS_{\bu,M})$$ given by $$v\mapsto [v]$$ with $[v]$ the class induced by $v$ in $H^{q-1}(\CS_{\bu,m})$. 
Let us define 
$$\overline{\de}_q:\frac{\ker(P_q)\cap \Im(d_{q-1,m})}{\Im(d''_{q-1})}\oplus \mathcal{H}^{q-1}_{m}(\CS_{\bu,M},\CS_{\bu,m})\rightarrow H^{q-1}(\CS_{\bu,M})$$ as  the morphism given by  
\begin{equation}
\label{def-b}
\overline{\de}_q([u]\oplus v):=\check{\de}_{q}([u])+[v]
\end{equation}
for any $[u]\in \frac{\ker(P_q)\cap \Im(d_{q-1,m})}{\Im(d''_{q-1})}$ and $v\in \mathcal{H}^{q-1}_{m}(\CS_{\bu,M},\CS_{\bu,m})$. Note that by Prop. \ref{morphism-b} $\check{\de}_{q}([u])$ is a cohomology class in $H^{q-1}(\CS_{\bu,M})$ and, by the above discussion, $[v]$ is also a  cohomology class in $H^{q-1}(\CS_{\bu,M})$. Hence on the right-hand side of \eqref{def-b} we have the sum of two cohomology classes in $H^{q-1}(\CS_{\bu,M})$. The next theorem is the crucial step toward the main result of this section.

\begin{theo}
\label{crucial-b}
Let $\CS_{\bu,M}$ and $\CS_{\bu,m}$ be two Fredholm complexes as in \eqref{M}-\eqref{m}. Then, the morphism of vector spaces:
$$\overline{\de}_q:\frac{\ker(P_q)\cap \Im(d_{q-1,m})}{\Im(d''_{q-1})}\oplus \mathcal{H}^{q-1}_{m}(\CS_{\bu,m},\CS_{\bu,m})\rightarrow H^{q-1}(\CS_{\bu,M})$$ defined above is an isomorphism.
\end{theo}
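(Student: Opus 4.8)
The plan is to follow the same two-step architecture used to prove Theorem \ref{crucial}, replacing $\CS_{\bu,M}$ with $\CS_{\bu,m}$ in the appropriate places and using that \emph{both} complexes are now Fredholm. First I would reduce the isomorphism claim to the injectivity of $\check\de_q$. Given $w\in\ker(P_q)\cap\Im(d_{q-1,m})$, $v\in\mathcal{H}^{q-1}_{m}(\CS_{\bu,M},\CS_{\bu,m})$ and $z\in\DS(d_{q-2,M})$ with $\de_{q,M}w+v=d_{q-2,M}z$, I would invoke the strong Hodge decomposition of Lemma \ref{newHodge} (applicable since $\CS_{\bu,m}$ is Fredholm, with $d_{q-1,m}$, $\de_{q+1,m}$ in place of the $M$-operators) to split $\HF_{q-1}$ and conclude $v=0$ and $\de_{q,M}w=d_{q-2,M}z$, so that injectivity of $\overline{\de}_q$ is equivalent to injectivity of $\check\de_q$.

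Next I would prove $\check\de_q$ is injective, mimicking the core of Theorem \ref{crucial}. Take $a\in\ker(P_q)\cap\Im(d_{q-1,m})$ with $\de_{q,M}a\in\Im(d_{q-2,M})$; the goal is $a\in\Im(d''_{q-1})$, i.e.\ $a=d_{q-1,m}z$ for some $z\in\ker(P_{q-1})\cap\DS(d_{q-1,m})$. Using the Fredholm Hodge decomposition of $\HF_{q-1}$ adapted to $\CS_{\bu,m}$, namely $\HF_{q-1}=(\ker(d_{q-1,m})\cap\ker(\de_{q-1,M}))\oplus\Im(d_{q-2,m})\oplus\Im(\de_{q,M})$, write $a=d_{q-1,m}u$ with $u\in\DS(d_{q-1,m})\cap\Im(\de_{q,M})$; then in degree $q$ find $v$ with $\de_{q,M}v=u$, so $u=\de_{q,M}v\in\DS(P_{q-1})$ and $P_{q-1}u=\de_{q,M}a$. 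From $\de_{q,M}a=d_{q-2,M}c$ for some $c$, and using the degree $q-2$ Hodge decomposition to arrange $c\in\Im(\de_{q-1,m})$, then $c=\de_{q-1,m}x=\de_{q-1,m}(d_{q-2,M}y)$, I get $P_{q-1}u=d_{q-2,M}c=P_{q-1}(d_{q-2,M}y)$ and set $z=u-d_{q-2,M}y$, so $P_{q-1}z=0$ and $a=d_{q-1,m}z$. The only extra point to check here, beyond the argument of Theorem \ref{crucial}, is that $z\in\DS(d_{q-1,m})$: this holds because $u\in\DS(d_{q-1,m})$ by construction and $d_{q-2,M}y\in\Im(d_{q-2,m})$ after the first Hodge reduction, hence lies in $\DS(d_{q-1,m})$ since it is killed by $d_{q-1,m}$.

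Finally I would establish surjectivity. Given $u\in\ker(d_{q-1,M})$, apply Lemma \ref{newHodge} (again for $\CS_{\bu,m}$) to write $u=u_0+d_{q-2,M}u_1+\de_{q,M}u_2$ with $u_0\in\mathcal{H}^{q-1}_{m}(\CS_{\bu,M},\CS_{\bu,m})$; applying the degree $q$ decomposition $\HF_q=(\ker(d_{q,m})\cap\ker(\de_{q,M}))\oplus\Im(d_{q-1,m})\oplus\Im(\de_{q+1,M})$ gives $u_2=d_{q-1,m}\gamma$. One checks $d_{q-1,m}\gamma\in\ker(P_q)\cap\Im(d_{q-1,m})$ exactly as in Theorem \ref{crucial}: it lies in $\ker(d_{q,M})$ since it lies in $\ker(d_{q,m})$, it lies in $\DS(\de_{q,M})$ with $\de_{q,M}(d_{q-1,m}\gamma)=\de_{q,M}u_2=u-u_0-d_{q-2,M}u_1\in\ker(d_{q-1,M})$, and then $P_q(d_{q-1,m}\gamma)=d_{q-1,M}(u-u_0-d_{q-2,M}u_1)=0$. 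Thus $\check\de_q([d_{q-1,m}\gamma])=[u-u_0]$ and $\overline{\de}_q([d_{q-1,m}\gamma]\oplus u_0)=[u]$ in $H^{q-1}(\CS_{\bu,M})$.

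I expect the main obstacle to be bookkeeping the two different minimal/maximal domain conditions consistently: one must be careful that every Hodge decomposition invoked is the correct one for the complex being used (the $m$-decomposition uses $d_{\bu,m}$ and $\de_{\bu,M}=d_{\bu-1,m}^*$), and in the injectivity step one must verify at each stage that the auxiliary elements produced actually lie in the domain $\DS(d_{q-1,m})$ required for the target complex $\CS''_{\bu}$, rather than merely in $\DS(d_{q-1,M})$ as in the absolute case. Modulo this care, the argument is a faithful transcription of the proof of Theorem \ref{crucial}.
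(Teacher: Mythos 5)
Your overall architecture (reduce to injectivity of $\check\de_q$ via Lemma \ref{newHodge}, then mimic Theorem \ref{crucial} for injectivity and surjectivity) is exactly the paper's, and your reduction step and surjectivity argument are correct. However, there is a genuine gap in the injectivity step, precisely at the point you yourself flag as ``the only extra point to check.'' You arrange $c\in\Im(\de_{q-1,m})$ and then write $c=\de_{q-1,m}x$ with $x$ reduced, via the decomposition $\HF_{q-1}=\ker(\de_{q-1,m})\oplus\Im(d_{q-2,M})$, to $x=d_{q-2,M}y$ with $y\in\DS(d_{q-2,M})$. These are the \emph{maximal}-type decompositions, so the correction term you subtract is $d_{q-2,M}y\in\Im(d_{q-2,M})$; your assertion that ``$d_{q-2,M}y\in\Im(d_{q-2,m})$ after the first Hodge reduction'' does not follow from anything in your construction. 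Consequently $z=u-d_{q-2,M}y$ is only known to lie in $\ker(P_{q-1})$ with $d_{q-1,M}z=a$, which shows $a\in\Im(d'_{q-1})$ but not $a\in\Im(d''_{q-1})$: for the latter you need $z\in\DS(d_{q-1,m})$, and an element of $\Im(d_{q-2,M})$ need not lie in $\DS(d_{q-1,m})$ (for a manifold with boundary, an exact form $d\eta$ with $\eta$ arbitrary need not satisfy the boundary condition defining the minimal domain).

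The fix is to run the reductions of $c$ and $x$ with the decompositions adapted to $\CS_{\bu,m}$ rather than $\CS_{\bu,M}$, which is what the paper does: use $\HF_{q-2}=\left(\ker(d_{q-2,m})\cap\ker(\de_{q-2,M})\right)\oplus\Im(d_{q-3,m})\oplus\Im(\de_{q-1,M})$ to arrange $c\in\DS(d_{q-2,M})\cap\Im(\de_{q-1,M})$ (the discarded component lies in $\ker(d_{q-2,m})\subset\ker(d_{q-2,M})$, so $d_{q-2,M}c$ is unchanged), write $c=\de_{q-1,M}x$, and then use $\HF_{q-1}=\ker(\de_{q-1,M})\oplus\Im(d_{q-2,m})$ to get $c=\de_{q-1,M}(d_{q-2,m}y)$ with $y\in\DS(d_{q-2,m})$. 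Then the correction term is $d_{q-2,m}y$, which lies in $\DS(d_{q-1,m})$ automatically (and is killed by $d_{q-1,m}$), so $z=u-d_{q-2,m}y\in\ker(P_{q-1})\cap\DS(d_{q-1,m})$ and $a=d_{q-1,m}z\in\Im(d''_{q-1})$ as required. With this substitution your proof coincides with the paper's.
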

\begin{proof}
Let $w\in \ker(P_q)\cap \Im(d_{q-1,m})$, $v\in \mathcal{H}^{q-1}_{m}(\CS_{\bu,M},\CS_{\bu,m})$ and $z\in \DS(d_{q-2,M})$ such that $\delta_{q,M}w+v=d_{q-2,M}z$. Thanks to the Hodge decomposition of Lemma \ref{newHodge}, we get that $v=0$ and $\delta_{q,M}w=d_{q-2,M}z$. Therefore, $\overline{\de}_{q,M}$ is injective if and only if 
$$\check{\de}_q:\frac{\ker (P_q)\cap 
\Im (d_{q-1,m})}{\Im(d''_{q-1})}
\to \frac{\ker(d_{q-1,M})}{\Im(d_{q-2,M})}$$ 
is injective. 
Let us show now that $\check{\de}_{q}$ is actually injective.  Take $a\in \ker (P_q)\cap
\Im (d_{q-1,m})$ and assume that $\de_{q,M} a\in \Im( d_{q-2,M})$. 
Then, we have to show that $a\in \Im (d''_{q-1})$. Since $a\in \Im (d_{q-1,m})$, by applying first the Hodge decomposition
$$
\HF^{q-1}=\left(\ker (d_{q-1,m})\cap \ker( \de_{q-1,M})\right)\oplus \Im( d_{q-2,m}) \oplus \Im (\de_{q,M})
$$  and then the Hodge decomposition
$$
\HF^{q}=\left(\ker (d_{q,m})\cap \ker( \de_{q,M})\right)\oplus \Im( d_{q-1,m}) \oplus \Im (\de_{q+1,M})
$$ 
we find an element $v\in \DS(\de_{q,M})\cap \Im(d_{q-1,m})$ such that $d_{q-1,m}(\delta_{q,M}v)=a.$ Moreover, by denoting $u:=\delta_{q,M}v$, we get $u\in \DS(P_{q-1})$ and $P_{q-1}u=\delta_{q,M}a$. By assumption we know that  there exists  $c\in \DS(d_{q-2,M})$ such that $d_{q-2,M}c=\de_{q,M}a$   and consequently $P_{q-1} u=d_{q-2,M}c$. Using the Hodge decomposition
\[
\HF^{q-2}=\left(\ker (d_{q-2,m})\cap \ker (\de_{q-2,M})\right)\oplus \Im( d_{ q-3,m}) \oplus \Im (\de_{q-1,M}),
\]
we can assume without loss of generality that  
\[
c\in \DS(\d_{q-2,M})\cap \Im \de_{q-1,M}.
\]
Therefore, $c=\de_{q-1,M}x$, for some $x\in\DS(\de_{q-1,M})$. Repeating the same argument, we can find $y\in \DS(\d_{q-2,m})$, such that
\[
c=\de_{q-1,M}x=\de_{q-1,M}(\d_{q-2,m}y).
\]

Since the domain of the minimal extensions are contained in the domains of the maximal extensions, we have
\[
P_{q-1} u=d_{q-2,M}c=d_{q-2,M}(\de_{q-1,M}(\d_{q-2,m}y))=P_{q-1}(\d_{q-2,m}y).
\]

Set $z=u-\d_{q-2,m}y$. Then, on one side
\[
P_{q-1} z=0,
\]
and on the other
\[
a=d_{q-1,m}u=d_{q-1,m}(u- d_{q-2,m}y)=d_{q-1,m}z,
\]
i.e. $a\in \Im(d''_{q-1})$, as desired. Now we proceed by showing that $\tilde{\de}_q$ is surjective. Let $u\in \ker(d_{q-1,M})$. Thanks to the Hodge decomposition of Lemma \ref{newHodge}, we can write $u=u_0+d_{q-2,M}u_1+\de_{q,M}u_2$ with $u_0\in \mathcal{H}^{q-1}_{\min}(\CS_{\bu,M},\CS_{\bu,m})$. Moreover, by applying the Hodge decomposition to $\HF_q$: 
$$
\HF^{q}=\left(\ker (d_{q,m})\cap \ker( \de_{q,M})\right)\oplus \Im( d_{q-1,m}) \oplus \Im (\de_{q+1,M})
$$ 
we obtain that $u_2=d_{q-1,m}\gamma$. Note that $d_{q-1,m}\gamma\in \DS(P_q)$ and $P_q(d_{q-1,m}\gamma)=0$. Indeed, it is obvious that $d_{q-1,m}\gamma\in \ker(d_{q,m})$. Furthermore, $d_{q-1,m}\gamma=u_2\in \DS(\de_{q,M})$ and  $\de_{q,M}u_2=u-u_0-d_{q-2,M}u_1\in \ker(d_{q-1,M})$. Therefore, $d_{q-1,m}\gamma\in \DS(P_q)$ and $P_q(d_{q-1,m}\gamma)=d_{q-1,M}(u-u_0-d_{q-2,M}u_1)=0$. 
Summarizing, we have 
$$d_{q-1,m}\gamma\in \ker(P_{q})\cap \Im(d_{q-1,m})\quad \mathrm{and}\quad \check{\de}_q([d_{q-1,m}\gamma])=[u-u_0]\in H^{q-1}(\CS_{\bu,M}).$$ 

We can thus conclude that 
$$\overline{\de}_q([d_{q-1,m}\gamma]\oplus u_0)=[u]\in H^{q-1}(\CS_{\bu,M})$$ and eventually that 
$$\overline{\de}_q:\frac{\ker(P_q)\cap \Im(d_{q-1,m})}{\Im(d''_{q-1})}\oplus \mathcal{H}^{q-1}_{m}(\CS_{\bu,M},\CS_{\bu,m})\rightarrow H^{q-1}(\CS_{\bu,M})$$ 
is an isomorphism of vector spaces.
\end{proof}

%\textcolor{red}{----------------------------}

\begin{theo}
\label{TH-b}
Let $\CS_{\bu,M}$ and $\CS_{\bu,m}$ be two Fredholm complexes as in \eqref{M}-\eqref{m}. Let $H^{q}(\CS''_{\bu})$ be the cohomology  groups defined in \eqref{homology-b}. Then, we have an isomorphism of vector spaces
$$H^{q}(\CS''_{\bu})\oplus \mathcal{H}^{q-1}_{m}(\CS_{\bu,M},\CS_{\bu,m})\cong H^{q}(\CS_{\bu,m})\oplus H^{q-1}(\CS_{\bu,M}).$$ 
\end{theo}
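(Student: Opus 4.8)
The plan is to run the relative analogue of the proof of Theorem~\ref{TH}, combining Proposition~\ref{firststep-b} with Theorem~\ref{crucial-b} and using the Fredholmness of $\CS_{\bu,m}$ to turn closures into honest images and to identify harmonic spaces with cohomology.

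First I would invoke Proposition~\ref{firststep-b}, which already provides
\[
H^{q}(\CS''_{\bu})\cong\bigl(\ker(d_{q,m})\cap\ker(\de_{q,M})\bigr)\oplus\frac{\overline{\Im(d_{q-1,m})}\cap\ker(P_q)}{\Im(d''_{q-1})}.
\]
Since $\CS_{\bu,m}$ is now assumed Fredholm, $\Im(d_{q-1,m})$ is closed (Section~\ref{s1}), so $\overline{\Im(d_{q-1,m})}=\Im(d_{q-1,m})$ and the second summand becomes $\dfrac{\Im(d_{q-1,m})\cap\ker(P_q)}{\Im(d''_{q-1})}$. Taking the direct sum of both sides with $\mathcal{H}^{q-1}_{m}(\CS_{\bu,M},\CS_{\bu,m})$ and applying Theorem~\ref{crucial-b} (the isomorphism $\overline{\de}_q$) to that middle term, I obtain
\[
H^{q}(\CS''_{\bu})\oplus\mathcal{H}^{q-1}_{m}(\CS_{\bu,M},\CS_{\bu,m})\cong\bigl(\ker(d_{q,m})\cap\ker(\de_{q,M})\bigr)\oplus H^{q-1}(\CS_{\bu,M}).
\]

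It then remains only to recognise $\ker(d_{q,m})\cap\ker(\de_{q,M})$ as $H^{q}(\CS_{\bu,m})$. By definition $\de_{q,M}=d^{*}_{q-1,m}$ is exactly the codifferential of the Hilbert complex $\CS_{\bu,m}$, so $\ker(d_{q,m})\cap\ker(\de_{q,M})$ is its space of harmonic vectors, and since $\CS_{\bu,m}$ is Fredholm the isomorphism recalled in Section~\ref{s1} gives $H^{q}(\CS_{\bu,m})\cong\ker(d_{q,m})\cap\ker(\de_{q,M})$. Substituting this into the previous display yields the asserted isomorphism. I do not expect any genuinely new obstacle here: the two substantial inputs — Proposition~\ref{firststep-b} and the fact that $\overline{\de}_q$ is an isomorphism (Theorem~\ref{crucial-b}, which rests on the Hodge decomposition of Lemma~\ref{newHodge} and hence on $\CS_{\bu,M}$ being Fredholm) — are already in hand. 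The only thing requiring attention is the bookkeeping of hypotheses: Fredholmness of $\CS_{\bu,m}$ is invoked twice (to replace $\overline{\Im(d_{q-1,m})}$ by $\Im(d_{q-1,m})$, and to identify the harmonic space of $\CS_{\bu,m}$ with $H^{q}(\CS_{\bu,m})$), whereas Fredholmness of $\CS_{\bu,M}$ is what underlies Theorem~\ref{crucial-b}; both are in force under the assumptions of the statement.
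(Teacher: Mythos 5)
Your proposal is correct and follows essentially the same route as the paper: the paper's proof is exactly the chain Proposition~\ref{firststep-b} $\Rightarrow$ Theorem~\ref{crucial-b} $\Rightarrow$ identification of the harmonic space of $\CS_{\bu,m}$ with $H^q(\CS_{\bu,m})$, only written more tersely. Your extra bookkeeping (using Fredholmness of $\CS_{\bu,m}$ to replace $\overline{\Im(d_{q-1,m})}$ by $\Im(d_{q-1,m})$ before invoking Theorem~\ref{crucial-b}) is exactly the detail the paper leaves implicit, and it is correct.
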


\begin{proof}
Thanks to Prop. \ref{firststep-b} and Th. \ref{crucial-b} we have the following chain of isomorphisms:
$$
\begin{aligned}
&H^{q}(\CS''_{\bu})\oplus \mathcal{H}^{q-1}_{m}(\CS_{\bu,M},\CS_{\bu,m})\cong\\
&\cong (\ker (d_{q,M})\cap \ker( \de_{q,m}))\oplus \frac{\overline{\Im(d_{q-1,M})}\cap\ker(P_q)}{\Im(d'_{q-1})}\oplus \mathcal{H}^{q-1}_{m}(\CS_{\bu,M},\CS_{\bu,m})\\
&\cong H^q(\CS_{\bu,m})\oplus H^{q-1}(\CS_{\bu,M}).
\end{aligned}
$$
\end{proof}

We collect now various consequences of Th. \ref{TH-b} that follow by arguing as in the proofs of Cor. \ref{finiteness}, Cor. \ref{simply} and Cor. \ref{simply-r}. 

\begin{corol}
\label{finiteness-rel}
In the setting of Th. \ref{TH-b} the vector space $H^{q}(\CS''_{\bu})$ is finite dimensional and moreover 
$$\begin{aligned}
\dim(H^{q}(\CS_{\bu,m}))+&\dim(H^{q-1}(\CS_{\bu,M}))-\dim(\Im(H^{q-1}(\CS_{\bu,m})\rightarrow H^{q-1}(\CS_{\bu,M})))\\ 
&\leq\dim(H^{q}(\CS''_{\bu}))\\
&\leq \dim(H^{q}(\CS_{\bu,m}))+\dim(H^{q-1}(\CS_{\bu,M})).
\end{aligned}$$
\end{corol}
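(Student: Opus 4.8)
The plan is to mirror, step by step, the argument used for Cor. \ref{finiteness}, replacing Th. \ref{TH} by Th. \ref{TH-b}. First I would invoke Th. \ref{TH-b}, which yields the isomorphism of vector spaces
$$H^{q}(\CS''_{\bu})\oplus \mathcal{H}^{q-1}_{m}(\CS_{\bu,M},\CS_{\bu,m})\cong H^{q}(\CS_{\bu,m})\oplus H^{q-1}(\CS_{\bu,M}).$$
Since in the present setting \emph{both} $\CS_{\bu,M}$ and $\CS_{\bu,m}$ are assumed Fredholm, the spaces $H^{q}(\CS_{\bu,m})$ and $H^{q-1}(\CS_{\bu,M})$ are finite dimensional, hence so is the right-hand side, and therefore $H^{q}(\CS''_{\bu})$ is finite dimensional as well. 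Taking dimensions in the displayed isomorphism gives
$$\dim(H^{q}(\CS''_{\bu}))=\dim(H^{q}(\CS_{\bu,m}))+\dim(H^{q-1}(\CS_{\bu,M}))-\dim(\mathcal{H}^{q-1}_{m}(\CS_{\bu,M},\CS_{\bu,m})),$$
and since $\dim(\mathcal{H}^{q-1}_{m}(\CS_{\bu,M},\CS_{\bu,m}))\geq 0$ the upper bound follows at once.

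For the lower bound I would reuse inequality \eqref{claim}, namely $\dim(\mathcal{H}^{q-1}_{m}(\CS_{\bu,M},\CS_{\bu,m}))\leq \dim(\Im(H^{q-1}(\CS_{\bu,m})\rightarrow H^{q-1}(\CS_{\bu,M})))$. This inequality was established in the proof of Cor. \ref{finiteness}, and its proof is completely independent of which complex of harmonic vectors ($\CS'_{\bu}$ or $\CS''_{\bu}$) one is considering: it only uses the alternative description \eqref{alternative} of $\Im(H^{q-1}(\CS_{\bu,m})\rightarrow H^{q-1}(\CS_{\bu,M}))$ together with the orthogonality of $\overline{\Im(d_{q-2,M})}$ to $\ker(\de_{q-1,m})$, in order to show that $\psi\mapsto i_*[\psi]$ is injective on $\mathcal{H}^{q-1}_{m}(\CS_{\bu,M},\CS_{\bu,m})$. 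Hence this step carries over verbatim. Combining it with the displayed dimension formula yields
$$\dim(H^{q}(\CS''_{\bu}))\geq \dim(H^{q}(\CS_{\bu,m}))+\dim(H^{q-1}(\CS_{\bu,M}))-\dim(\Im(H^{q-1}(\CS_{\bu,m})\rightarrow H^{q-1}(\CS_{\bu,M}))),$$
which is the asserted lower bound.

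There is essentially no substantive obstacle here: the only point that requires a (routine) check is that inequality \eqref{claim} really does transfer without modification, which it does because its proof never refers to $\CS'_{\bu}$ but only to the cohomological data of $\CS_{\bu,M}$ and $\CS_{\bu,m}$ and to the ambient Hilbert-space orthogonality relations. One should also note that the stronger hypothesis in the present section --- that $\CS_{\bu,m}$ is Fredholm, not merely a Hilbert complex --- is exactly what guarantees that $H^{q}(\CS_{\bu,m})$ is finite dimensional, so that both bounds are meaningful; this is already built into the statement of Th. \ref{TH-b}.
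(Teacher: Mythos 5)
Your proof is correct and follows exactly the route the paper intends: the authors state that Cor.~\ref{finiteness-rel} ``follows by arguing as in the proof of Cor.~\ref{finiteness}'', which is precisely what you do, including the key observation that inequality \eqref{claim} transfers verbatim since its proof never references the harmonic complex itself.
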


\begin{corol}
\label{simply-rel}
In the setting of Th. \ref{TH-b} if in addition $\mathcal{H}_m^{q-1}(\CS_{\bu,M},\CS_{\bu,m})=\{0\}$, then we have $$H^{q}(\CS''_{\bu})\cong H^{q}(\CS_{\bu,M})\oplus H^{q-1}(\CS_{\bu,M}).$$ In particular the above isomorphism holds true whenever $$\Im(H^{q-1}(\CS_{\bu,m})\rightarrow H^{q-1}(\CS_{\bu,M}))=\{0\}.$$
\end{corol}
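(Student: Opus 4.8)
The plan is to obtain both assertions directly from Theorem \ref{TH-b}, supplemented by the dimension estimate \eqref{claim} established in the course of proving Corollary \ref{finiteness}; no new analytic input is required. Concretely, I would first invoke Theorem \ref{TH-b}, which under the standing assumption that both $\CS_{\bu,M}$ and $\CS_{\bu,m}$ are Fredholm gives
$$H^{q}(\CS''_{\bu})\oplus \mathcal{H}^{q-1}_{m}(\CS_{\bu,M},\CS_{\bu,m})\cong H^{q}(\CS_{\bu,m})\oplus H^{q-1}(\CS_{\bu,M}).$$
Imposing the extra hypothesis $\mathcal{H}^{q-1}_{m}(\CS_{\bu,M},\CS_{\bu,m})=\{0\}$ annihilates the second summand on the left, and the isomorphism collapses to $H^{q}(\CS''_{\bu})\cong H^{q}(\CS_{\bu,m})\oplus H^{q-1}(\CS_{\bu,M})$, which is the first displayed conclusion.

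For the ``in particular'' clause I would show that the hypothesis $\Im(H^{q-1}(\CS_{\bu,m})\rightarrow H^{q-1}(\CS_{\bu,M}))=\{0\}$ already forces $\mathcal{H}^{q-1}_{m}(\CS_{\bu,M},\CS_{\bu,m})=\{0\}$, so that the first part applies. The key input is the inequality \eqref{claim},
$$\dim(\mathcal{H}^{q-1}_{m}(\CS_{\bu,M},\CS_{\bu,m}))\leq \dim(\Im(H^{q-1}(\CS_{\bu,m})\rightarrow H^{q-1}(\CS_{\bu,M}))),$$
proved inside Corollary \ref{finiteness} by checking that the map $\psi\mapsto i_*[\psi]$ is injective, using that $\psi\in\ker(\de_{q-1,m})$ and that $\overline{\Im(d_{q-2,M})}$ is orthogonal to $\ker(\de_{q-1,m})$. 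Since this inequality involves only the spaces $\mathcal{H}^{q-1}_{m}(\CS_{\bu,M},\CS_{\bu,m})$ and $\Im(H^{q-1}(\CS_{\bu,m})\rightarrow H^{q-1}(\CS_{\bu,M}))$, it is insensitive to the absolute/relative distinction and transfers unchanged to the present setting; a vanishing image then forces the vanishing of $\mathcal{H}^{q-1}_{m}(\CS_{\bu,M},\CS_{\bu,m})$.

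The one step deserving care, and the closest thing here to an obstacle, concerns the degree-$q$ summand: Theorem \ref{TH-b} delivers $H^{q}(\CS_{\bu,m})$ rather than $H^{q}(\CS_{\bu,M})$, so the isomorphism should carry the minimal complex $\CS_{\bu,m}$ in degree $q$ (unlike the absolute statement of Corollary \ref{simply}, where $\CS_{\bu,M}$ is correct because it is fed by Theorem \ref{TH}). This is purely a matter of correctly transcribing the output of Theorem \ref{TH-b}; there is no analytic difficulty beyond that.
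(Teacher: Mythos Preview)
Your proposal is correct and follows exactly the approach the paper indicates: the corollary is listed among ``various consequences of Th.~\ref{TH-b} that follow by arguing as in the proofs of Cor.~\ref{finiteness}, Cor.~\ref{simply} and Cor.~\ref{simply-r},'' i.e., apply Th.~\ref{TH-b} with the vanishing hypothesis and use inequality \eqref{claim} for the ``in particular'' clause. You have also correctly spotted that the displayed conclusion should read $H^{q}(\CS_{\bu,m})\oplus H^{q-1}(\CS_{\bu,M})$ rather than $H^{q}(\CS_{\bu,M})\oplus H^{q-1}(\CS_{\bu,M})$; this is a typo in the paper's statement, as Th.~\ref{TH-b} indeed produces the minimal complex in degree $q$.
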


\begin{corol}
\label{simply-s}
In the setting of Th. \ref{TH} if in addition:
\begin{enumerate}
\item $H^{q-1}(\CS_{\bu,m})=\{0\}$ then $H^{q}(\CS'_{\bu})\cong H^{q}(\CS_{\bu,m})\oplus H^{q-1}(\CS_{\bu,M})$;
\item $H^{q-1}(\CS_{\bu,M})=\{0\}$ then $H^{q}(\CS'_{\bu})\cong H^{q}(\CS_{\bu,m})$.
\end{enumerate}
\end{corol}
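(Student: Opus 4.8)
The plan is to derive both isomorphisms directly from Th.~\ref{TH}, reducing each of the two hypotheses to the single condition that the correction term $\mathcal{H}^{q-1}_{m}(\CS_{\bu,M},\CS_{\bu,m})$ vanishes. Recall that Th.~\ref{TH} provides the isomorphism
\[
H^{q}(\CS'_{\bu})\oplus \mathcal{H}^{q-1}_{m}(\CS_{\bu,M},\CS_{\bu,m})\cong H^{q}(\CS_{\bu,M})\oplus H^{q-1}(\CS_{\bu,M});
\]
hence, once the left-hand correction term is shown to be trivial, the desired splitting of $H^{q}(\CS'_{\bu})$ is immediate, and the two separate conclusions will differ only by whether the summand $H^{q-1}(\CS_{\bu,M})$ survives.

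The core step is to observe that each of the two hypotheses forces the vanishing of the image $\Im(H^{q-1}(\CS_{\bu,m})\rightarrow H^{q-1}(\CS_{\bu,M}))$. Indeed, under hypothesis (1) the source $H^{q-1}(\CS_{\bu,m})$ is trivial, whereas under hypothesis (2) the target $H^{q-1}(\CS_{\bu,M})$ is trivial; in both situations this image is the zero subspace. I would then feed this into the inequality \eqref{claim},
\[
\dim(\mathcal{H}^{q-1}_{m}(\CS_{\bu,M},\CS_{\bu,m}))\leq \dim(\Im(H^{q-1}(\CS_{\bu,m})\rightarrow H^{q-1}(\CS_{\bu,M}))),
\]
established in the proof of Cor.~\ref{finiteness}, to conclude that $\mathcal{H}^{q-1}_{m}(\CS_{\bu,M},\CS_{\bu,m})=\{0\}$ in either case.

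With this vanishing the left-hand side of Th.~\ref{TH} collapses to $H^{q}(\CS'_{\bu})$, giving $H^{q}(\CS'_{\bu})\cong H^{q}(\CS_{\bu,M})\oplus H^{q-1}(\CS_{\bu,M})$, which proves case (1). For case (2) I would additionally invoke the hypothesis $H^{q-1}(\CS_{\bu,M})=\{0\}$ to discard the second summand on the right, obtaining $H^{q}(\CS'_{\bu})\cong H^{q}(\CS_{\bu,M})$. Since Th.~\ref{TH} and the inequality \eqref{claim} are both already at our disposal, no new analytic input is required; the only point deserving attention is the uniform passage from the two distinct vanishing hypotheses to the common conclusion $\mathcal{H}^{q-1}_{m}=\{0\}$, which rests on the description \eqref{alternative} of the image and the injectivity of $\psi\mapsto i_*[\psi]$ exploited in the proof of \eqref{claim}.
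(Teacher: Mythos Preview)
Your argument is internally sound but it does not prove the statement as written. From Th.~\ref{TH} together with $\mathcal{H}^{q-1}_{m}=\{0\}$ you correctly obtain
\[
H^{q}(\CS'_{\bu})\cong H^{q}(\CS_{\bu,M})\oplus H^{q-1}(\CS_{\bu,M})\quad\text{and}\quad H^{q}(\CS'_{\bu})\cong H^{q}(\CS_{\bu,M}),
\]
with the subscript $M$ throughout. The corollary, however, asserts isomorphisms with $H^{q}(\CS_{\bu,m})$ (subscript $m$), and there is no reason in general for $H^{q}(\CS_{\bu,M})$ and $H^{q}(\CS_{\bu,m})$ to agree. What you have actually reproved is Cor.~\ref{simply-r}, not Cor.~\ref{simply-s}.

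The resolution is that the printed statement contains typos. Corollary~\ref{simply-s} sits in Section~\ref{s7}, immediately after Th.~\ref{TH-b}, and is introduced as one of the ``consequences of Th.~\ref{TH-b} that follow by arguing as in the proofs of Cor.~\ref{finiteness}, Cor.~\ref{simply} and Cor.~\ref{simply-r}.'' The intended statement is about the \emph{relative} complex: one should read $H^{q}(\CS''_{\bu})$ in place of $H^{q}(\CS'_{\bu})$ and ``in the setting of Th.~\ref{TH-b}'' in place of ``in the setting of Th.~\ref{TH}.'' With that reading, the right-hand sides $H^{q}(\CS_{\bu,m})\oplus H^{q-1}(\CS_{\bu,M})$ and $H^{q}(\CS_{\bu,m})$ are exactly what Th.~\ref{TH-b} produces once $\mathcal{H}^{q-1}_{m}=\{0\}$. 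Your reduction step---each hypothesis forces $\Im(H^{q-1}(\CS_{\bu,m})\to H^{q-1}(\CS_{\bu,M}))=\{0\}$, hence $\mathcal{H}^{q-1}_{m}=\{0\}$ by \eqref{claim}---is precisely the paper's intended argument (it is the verbatim transplant of the proof of Cor.~\ref{simply-r}); you simply need to feed it into Th.~\ref{TH-b} rather than Th.~\ref{TH}.
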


Interestingly the difference between $\dim(H^{q}(\CS'_{\bu}))$ and $\dim(H^{q}(\CS''_{\bu}))$ is fully determined by the cohomology of the complexes $\CS_{\bu,m}$ and $\CS_{\bu,M}$. More precisely we have 

\begin{corol}
\label{difference}
In the setting of Th. \ref{TH-b} we have $$\dim(H^{q}(\CS'_{\bu}))-\dim(H^{q}(\CS''_{\bu}))=\dim(H^{q}(\CS_{\bu,M}))-\dim(H^{q}(\CS_{\bu,m}))$$
\end{corol}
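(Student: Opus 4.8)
The plan is to derive Corollary \ref{difference} as an immediate consequence of the two main theorems, Th. \ref{TH} and Th. \ref{TH-b}, since these two isomorphisms have been engineered precisely so that their right-hand sides differ only in the summands $H^{q}(\CS_{\bu,M})$ versus $H^{q}(\CS_{\bu,m})$, while their left-hand sides differ only in the summands $H^{q}(\CS'_{\bu})$ versus $H^{q}(\CS''_{\bu})$. Concretely, under the standing assumption that both $\CS_{\bu,M}$ and $\CS_{\bu,m}$ are Fredholm complexes, all four cohomology spaces $H^{q}(\CS'_{\bu})$, $H^{q}(\CS''_{\bu})$, $H^{q}(\CS_{\bu,M})$, $H^{q}(\CS_{\bu,m})$, together with $\mathcal{H}^{q-1}_{m}(\CS_{\bu,M},\CS_{\bu,m})$, are finite dimensional (the first by Cor. \ref{finiteness}, the second by Cor. \ref{finiteness-rel}, the third and fourth by the Fredholm hypothesis, and the fifth because it injects into $H^{q-1}(\CS_{\bu,M})$), so all the dimensions below are well-defined natural numbers.

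The first step is to write down, from Th. \ref{TH}, the equality
\begin{equation}
\nonumber
\dim\big(H^{q}(\CS'_{\bu})\big)+\dim\big(\mathcal{H}^{q-1}_{m}(\CS_{\bu,M},\CS_{\bu,m})\big)=\dim\big(H^{q}(\CS_{\bu,M})\big)+\dim\big(H^{q-1}(\CS_{\bu,M})\big),
\end{equation}
obtained by taking dimensions of both sides of the isomorphism of vector spaces and using that $\dim$ is additive over direct sums. The second step is to write down, from Th. \ref{TH-b}, the analogous equality
\begin{equation}
\nonumber
\dim\big(H^{q}(\CS''_{\bu})\big)+\dim\big(\mathcal{H}^{q-1}_{m}(\CS_{\bu,M},\CS_{\bu,m})\big)=\dim\big(H^{q}(\CS_{\bu,m})\big)+\dim\big(H^{q-1}(\CS_{\bu,M})\big).
\end{equation}
The third and final step is to subtract the second equality from the first: the term $\dim\big(\mathcal{H}^{q-1}_{m}(\CS_{\bu,M},\CS_{\bu,m})\big)$ cancels on the left, the term $\dim\big(H^{q-1}(\CS_{\bu,M})\big)$ cancels on the right, and one is left with exactly
\begin{equation}
\nonumber
\dim\big(H^{q}(\CS'_{\bu})\big)-\dim\big(H^{q}(\CS''_{\bu})\big)=\dim\big(H^{q}(\CS_{\bu,M})\big)-\dim\big(H^{q}(\CS_{\bu,m})\big),
\end{equation}
which is the claimed identity.

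There is essentially no obstacle here: the whole content is a bookkeeping subtraction of two previously established identities, and the only point that requires a (trivial) remark is that all five vector spaces involved are finite dimensional, so that the subtraction of dimensions is legitimate and no $\infty-\infty$ ambiguity arises. If one wished to be slightly more careful, one could note that Th. \ref{TH-b} is stated under the assumption that \emph{both} complexes are Fredholm, which is exactly the standing assumption of Section \ref{s7} in which Cor. \ref{difference} lives, so Th. \ref{TH} (which only needs $\CS_{\bu,M}$ Fredholm) applies a fortiori; thus both inputs are available simultaneously and the argument goes through verbatim.
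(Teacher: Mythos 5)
Your proposal is correct and is exactly the argument the paper intends: the paper's proof of this corollary simply says it follows at once from Th. \ref{TH} and Th. \ref{TH-b}, i.e.\ by taking dimensions of the two isomorphisms and subtracting, with the common summands $\dim(\mathcal{H}^{q-1}_{m}(\CS_{\bu,M},\CS_{\bu,m}))$ and $\dim(H^{q-1}(\CS_{\bu,M}))$ cancelling. Your extra remark on finite dimensionality, which rules out any $\infty-\infty$ ambiguity, is a sensible (if implicit in the paper) precaution.
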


\begin{proof}
This follows at once by Th. \ref{TH} and Th. \ref{TH-b}.
\end{proof}

\section{Geometric applications}
\label{s8}

In this last part we collect various applications of the previous results to smoothly Thom-Mather stratified spaces. In order to make this section as self-contained as possible, we start with a  concise introduction to smoothly Thom-Mather stratified spaces and intersection cohomology.

\subsection{Riemannian manifolds and $L^2$-cohomology}
\label{s8.1}

 The aim of this section is to  recall briefly some basic notions about $L^2$-spaces and $L^2$-cohomology. Let $(M,g)$  be an open and possibly incomplete Riemannian manifold of dimension $m$ and let $\mathrm{dvol}_g$ be the one-density associated to $g$.  We consider $M$ endowed with the corresponding Riemannian measure.  A $k$-form $\omega$ is said {\em measurable} if, for any trivialization $(U,\phi)$ of $\Lambda^kT^*M$, $\phi(\omega|_U)$ is given by a $k$-tuple of measurable functions.  Given a measurable $k$-form $\omega$ the pointwise norm $|\omega|_g$ is defined as $|\omega|_g:= (g(\omega,\omega))^{1/2}$, where with a little abuse of notation we still label by $g$ the metric induced by $g$ on $\Lambda^kT^*M$. Then we can define $L^2\Omega^k(M,g)$ as the  space  of measurable  $k$-forms such that    $$\|\omega\|_{L^2\Omega^k(M,g)}:=\left(\int_{M}|\omega|_g^2\mathrm{dvol}_g\right)^{1/2}<\infty.$$ It is well known that we have a Hilbert space whose inner product is $$\langle \omega, \eta \rangle_{L^2\Omega^k(M,g)}:= \int_{M}g(\omega,\eta)\mathrm{dvol}_g$$ and such that $\Omega^k_c(M)$,  the space of smooth $k$-forms with compact support,  is a dense subspace. Consider now the de Rham complex of smooth forms with compact support: $$\cdot\cdot\cdot\stackrel{d_{k-1}}{\rightarrow} \Omega^k_c(M)\stackrel{d_k}{\rightarrow} \Omega_c^{k+1}(M)\stackrel{d_{k+1}}{\rightarrow}\cdot\cdot\cdot$$ We want to turn the above complex into a Hilbert complex and to do so we look at $d_k$ as an unbounded, densely defined and closable operator  acting between $L^2\Omega^k(M,g)$ and $L^2\Omega^{k+1}(M,g)$. In general $d_k$ admits several closed extensions. For our purposes we recall now the definitions of the maximal and minimal one. Let $$\delta_{k+1}:\Omega_c^{k+1}(M)\rightarrow \Omega^{k}_c(M)$$ be the formal adjoint of $d_k$. We recall that 
$\delta_{k+1}$ is the first order differential operator uniquely determined by the fact that $$\int_Mg(d_k\alpha,\beta)\mathrm{dvol}_g=\int_Mg(\alpha,\delta_{k+1}\beta)\mathrm{dvol}_g$$ for every $\alpha\in \Omega_c^{k}(M)$ and $\beta\in \Omega_c^{k+1}(M)$. Then, the domain of the {\em maximal extension} of $d_k:L^2\Omega^k(M,g)\longrightarrow L^2\Omega^{k+1}(M,g)$ is defined as
\begin{multline}
\nonumber\DS(d_{k,\max}):=\\
\{\omega\in L^2\Omega^k(M,g): \text{there is}\ \eta\in L^2\Omega^{k+1}(M,g)\ \text{such that} \int_{M}g(\omega,\delta_{k+1}\phi)\mathrm{dvol}_g\\
 =\int_{M}g(\eta,\phi)\mathrm{dvol}_g\ \text{for each}\ \phi\in \Omega^{k+1}_c(M)\}.\ \text{In this case we put}\ d_{k,\max}\omega=\eta.
\end{multline}

In other words the maximal extension of $d_k$ is the one defined in the {\em distributional sense}. The domain of the {\em minimal extension} of $d_k:L^2\Omega^k(M,g)\longrightarrow L^2\Omega^{k+1}(M,g)$ is defined as
\begin{multline}
\nonumber \DS(d_{k,\min}):=\\
\{\omega\in L^2\Omega^k(M,g)\ \text{such that there is a sequence}\ \{\omega_i\}\in \Omega_c^k(M)\ \text{with}\ \omega_i\rightarrow \omega\\ 
   \text{in}\ L^2\Omega^k(M,g)\ \text{and}\ d_k\omega_i\rightarrow \eta\ \text{in}\ L^2\Omega^{k+1}(M,g)\ \text{to some }\ \eta\in L^2\Omega^{k+1}(M,g)\}.
\end{multline}
In this case we put $d_{k,\min}\omega=\eta.$ Briefly the minimal extension of $d_k$ is the closure of $\Omega^{k}_c(M)$ under the graph norm of $d_k$. Clearly $\DS(d_{k,\min})\subset \DS(d_{k,\max})$ and $d_{k,\max}\omega=d_{k,\min}\omega$ for any $\omega\in \DS(d_{k,\min})$. It is easy to verify that if $\omega\in \DS(d_{k,\max/\min})$ then $d_{k,\max/\min}\omega\in \DS(d_{k+1,\max/\min})$ and the corresponding compositions are identically zero, that is $d_{k+1,\max}\circ d_{k,\max}\equiv 0$ and $d_{k+1,\min}\circ d_{k,\min}\equiv 0$. Summarizing we obtained two Hilbert complexes: $$L^2_{\bullet,\max}:=(L^2\Omega^{\bullet}(M,g),d_{\bullet,\max})\quad \mathrm{and}\quad L^2_{\bullet,\min}:=(L^2\Omega^{\bullet}(M,g),d_{\bullet,\min}).$$ 

The $L^2$-maximal/minimal de Rham cohomology of $(M,g)$ is defined as $$H^k_{2,\max/\min}(M,g):=\ker(d_{k,\max})/\Im(d_{k-1,\max/\min})$$ while the reduced $L^2$-maximal/minimal de Rham cohomology of $(M,g)$ is defined as 
$$\overline{H}^k_{2,\max/\min}(M,g):=\ker(d_{k,\max})/\overline{\Im(d_{k-1,\max/\min})}.$$
Furthermore, it is not difficult to check that $$(d_{k,\max/\min})^*=\delta_{k+1,\min/\max}.$$ We continue now by pointing out the following $L^2$-Hodge decompositions:

\begin{prop}
Let $(M,g)$ be a possibly incomplete Riemannian manifold. Then $$L^2\Omega^k(M,g)=\left(\ker(d_{k,\min})\cap \ker(\delta_{k,\min})\right)\oplus\left(\overline{\Im(d_{k-1,\max})}+\overline{\Im(\delta_{k+1,\max})}\right).$$
If $L^2_{\bullet,\max}$ is a Fredholm complex then
$$L^2\Omega^k(M,g)=\left(\ker(d_{k,\min})\cap \ker(\delta_{k,\min})\right)\oplus\left(\Im(d_{k-1,\max})+\Im(\delta_{k+1,\max})\right).$$
\end{prop}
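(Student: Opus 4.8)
The plan is to obtain both decompositions by specialising the abstract machinery of Sections~\ref{s2}--\ref{s6} to the pair of Hilbert complexes $\CS_{\bu,M}=L^2_{\bullet,\max}$ and $\CS_{\bu,m}=L^2_{\bullet,\min}$, so that the first formula becomes an instance of Lemma~\ref{new-Hodge-weak} and the second an instance of Lemma~\ref{newHodge}(1).

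First I would check that the pair $(L^2_{\bullet,\max},L^2_{\bullet,\min})$ satisfies the standing hypotheses of those sections: both are Hilbert complexes over the common spaces $\HF_k=L^2\Omega^k(M,g)$, and $d_{k,\max}$ is a closed extension of $d_{k,\min}$ because $\DS(d_{k,\min})\subset\DS(d_{k,\max})$ and the two operators agree there. The essential input is the identification of the dual operators: from $(d_{k,\max})^*=\delta_{k+1,\min}$ and $(d_{k,\min})^*=\delta_{k+1,\max}$, recalled just above the Proposition, one reads off $\de_{k,M}=d^*_{k-1,m}=(d_{k-1,\min})^*=\delta_{k,\max}$ and $\de_{k,m}=d^*_{k-1,M}=(d_{k-1,\max})^*=\delta_{k,\min}$. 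Hence in this situation the abstract operator $P_k$ of Section~\ref{s2} is $P_k=d_{k-1,\max}\circ\delta_{k,\max}+\delta_{k+1,\max}\circ d_{k,\max}$, the abstract ``minimal harmonic space'' $\ker(d_{k,m})\cap\ker(\de_{k,m})$ is exactly $\ker(d_{k,\min})\cap\ker(\delta_{k,\min})$, and $\overline{\Im(d_{k-1,M})}+\overline{\Im(\de_{k+1,M})}$ (resp.\ $\Im(d_{k-1,M})+\Im(\de_{k+1,M})$) is exactly $\overline{\Im(d_{k-1,\max})}+\overline{\Im(\delta_{k+1,\max})}$ (resp.\ $\Im(d_{k-1,\max})+\Im(\delta_{k+1,\max})$). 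Once this dictionary is in place, the second displayed decomposition is literally Lemma~\ref{newHodge}(1) applied to $L^2_{\bullet,\max}$, which is Fredholm precisely by the hypothesis of the second part of the Proposition, so nothing further is required there.

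For the first decomposition the same dictionary makes it coincide with the conclusion of Lemma~\ref{new-Hodge-weak}; the subtle point, and the one I expect to cost some care, is that Lemma~\ref{new-Hodge-weak} is stated and proved under the assumption that $\CS_{\bu,M}$ is weak-Fredholm. So I would either add the hypothesis that $L^2_{\bullet,\max}$ is weak-Fredholm, or argue directly: start from the standard weak Hodge decompositions \eqref{weak} for $L^2_{\bullet,\max}$ and for $L^2_{\bullet,\min}$, observe that $\ker(d_{k,\min})\cap\ker(\delta_{k,\min})$ is precisely the intersection of the two harmonic spaces occurring there and that its orthogonal complement is $\overline{\Im(d_{k-1,\max})+\Im(\delta_{k+1,\max})}$, so that $L^2\Omega^k=\big(\ker(d_{k,\min})\cap\ker(\delta_{k,\min})\big)\oplus\overline{\Im(d_{k-1,\max})+\Im(\delta_{k+1,\max})}$ holds with no hypothesis at all; the remaining and genuinely non-formal step is then to show that $\overline{\Im(d_{k-1,\max})}+\overline{\Im(\delta_{k+1,\max})}$ is itself already closed, which is exactly the place where a finiteness hypothesis on the cohomology enters (as in the proofs of Lemma~\ref{new-Hodge-weak} and Lemma~\ref{newHodge}). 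I expect this closedness to be the only real obstacle; everything else is bookkeeping through the geometric/abstract translation.
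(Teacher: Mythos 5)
Your proposal is essentially the paper's own proof: the paper disposes of this Proposition in one line by applying Lemma~\ref{new-Hodge-weak} and Lemma~\ref{newHodge} to the pair $L^2_{\bullet,\max}$, $L^2_{\bullet,\min}$, and the max/min dictionary you set up ($\de_{k,M}=\delta_{k,\max}$, $\de_{k,m}=\delta_{k,\min}$, $P_k=d_{k-1,\max}\circ\delta_{k,\max}+\delta_{k+1,\max}\circ d_{k,\max}$) is exactly the translation that is needed, so the second decomposition is indeed immediate from Lemma~\ref{newHodge}(1). The caveat you raise about the first decomposition is genuine and is glossed over in the paper: Lemma~\ref{new-Hodge-weak} is proved under the weak-Fredholm hypothesis on $\CS_{\bu,M}$, while the first statement of the Proposition carries no hypothesis on $(M,g)$. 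What holds unconditionally is the decomposition with $\overline{\Im(d_{k-1,\max})+\Im(\delta_{k+1,\max})}$ as second summand; upgrading this to $\overline{\Im(d_{k-1,\max})}+\overline{\Im(\delta_{k+1,\max})}$ requires that this sum of closed subspaces be closed, and since the two summands need not be orthogonal (because $\delta_{k+1,\max}=(d_{k,\min})^*$ rather than $(d_{k,\max})^*$) this is exactly where the finite-dimensionality of $\bar H^\bu(L^2_{\bullet,\max})$ enters in the lemma's proof. In all of the paper's geometric applications the maximal complex is Fredholm, so nothing downstream is affected, but as a freestanding statement the first identity should be read with the weak-Fredholm hypothesis added — your instinct to either add it or isolate the closedness as the one non-formal step is correct.
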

\begin{proof}
The above proposition follows by applying lemma \ref{new-Hodge-weak} and \ref{newHodge} to $L^2(M,g)$ and the Hilbert complexes $L^2_{\bullet,\max}$ and $L^2_{\bullet,\min}$.
\end{proof}
 
Following the first two sections of this paper we can consider the operators $$P_q:=d_{q-1,\max}\circ \delta_{q,\max}+\delta_{q+1,\max}\circ d_{q,\max}$$ defined on their natural domains, and the complexes of $L^2$-harmonic forms and relative $L^2$-harmonic forms that we denote with $(L^{2'}_{\bullet},d'_{\bullet})$ and $(L^{2''}_{\bullet},d''_{\bullet})$, respectively. In the remaining part of this paper we will exhibit examples of Riemannian manifolds $(M,g)$ such that the complexes $(L^{2'}_{\bullet},d'_{\bullet})$ and $(L^{2''}_{\bullet},d''_{\bullet})$ satisfy the requirements of Th. \ref{TH} and Th. \ref{TH-b} and such that the corresponding cohomology group have a purely topological interpretations or, in the worst case, their dimensions have purely topological upper and lower bounds.\\

\noindent We conclude now this section with the following remark that highlight the fact that the complexes $(L^{2'}_{\bullet},d'_{\bullet})$ and $(L^{2'}_{\bullet},d'_{\bullet})$ produce a new cohomology only when $d_{\bullet,\max}\neq d_{\bullet,\min}$. More precisely we have

 \begin{rem}
\label{r1}
Let $(M,g)$ be a Riemannian manifold of dimension $m$ such that $d_{q,\max}=d_{q,\min}$ and $d_{q-1,\max}=d_{q-1,\min}$ for some $q\in \{0,...,m\}$. 
Passing to the adjoints we obtain  $\delta_{q,\max}=\delta_{q,\min}$ and $\delta_{q+1,\max}=\delta_{q+1,\min}$. Let $$\Delta_{q,\mathrel{abs}}:=d_{q-1,\max}\circ \delta_{q,\min}+\delta_{q+1,\min}\circ d_{q,\max}$$ and $$\Delta_{q,\mathrel{rel}}:=d_{q-1,\min}\circ \delta_{q,\max}+\delta_{q+1,\max}\circ d_{q,\min}$$ be the Laplacians that in degree $q$ are induced by the maximal/minimal $L^2$ de Rham complex, respectively. We then have
$$P_{q}=\Delta_{q,\mathrel{abs}}=\Delta_{q,\mathrel{rel}}$$ which obviously implies that $\ker(P_q)=\ker(\Delta_{q,\mathrm{abs}})=\ker(\Delta_{q,\mathrm{rel}})$. Since $$\ker(\Delta_{q,\mathrm{abs}})=\ker(d_{q,\max})\cap \ker(\delta_{q,\min})\quad\quad \ker(\Delta_{q,\mathrm{rel}})=\ker(d_{q,\min})\cap \ker(\delta_{q,\max})$$ we deduce that both the operators $d'_q$ and $d''_q$ are the zero operator. Similarly in degree $q-1$ we have $$P_{q-1}=d_{q-2,\max}\circ \delta_{q-1,\max}+\delta_{q,\min}\circ d_{q-1,\max}=d_{q,\max}\circ \delta_{q-1,\max}+\delta_{q,\max}\circ d_{q-1,\min}.$$ Since $\delta_{q,\min}\circ d_{q-1,\max}=\delta_{q,\max}\circ d_{q-1,\min}$ and $\Im(d_{q,\max}\circ \delta_{q-1,\max})$ is orthogonal to $\Im(\delta_{q,\min}\circ d_{q-1,\max})$ we can conclude that $\ker(P_{q-1})\subset \ker(d_{q-1,\max})=\ker(d_{q-1,\min})$ and thus also in this case we have that both the operators $d'_q$ and $d''_q$ are the zero operator. Summarizing, if $d_{q,\max}=d_{q,\min}$ and $d_{q-1,\max}=d_{q-1,\min}$  then $$H^{q}(L^{2'}_{\bullet})=H^{q}_{2,\max}(M,g)=H^{q}_{2\min}(M,g)=H^{q}(L^{2''}_{\bullet}).$$
\end{rem}

\subsection{Applications to compact manifolds with boundary}
\label{s8.2}

Let $\overline{M}$ be a compact manifold with boundary and let $M:\overline{M}\setminus \partial \overline{M}$ be the interior of $\overline{M}$. Finally let $\overline{g}$ be an arbitrarily fixed Riemannian metric on $\overline{M}$ and let $g:=\overline{g}|_{M}$. We are in position for the next

\begin{theo}
\label{boundary}
In the above setting we have the following isomorphisms:
$$H^q(L^{2'}_{\bullet})\cong H^{q-1}_{dR}(\overline{M})\oplus H^q_{dR}(\overline{M})$$
and
$$H^q(L^{2''}_{\bullet})\cong H^{q-1}_{dR}(\overline{M})\oplus H^q_{dR}(\overline{M},\partial\overline{M}).$$
\end{theo}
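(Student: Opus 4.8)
The plan is to apply Theorems \ref{TH} and \ref{TH-b} to the pair of Hilbert complexes $\CS_{\bu,M}=L^2_{\bu,\max}$ and $\CS_{\bu,m}=L^2_{\bu,\min}$ associated to $(M,g)$, and then invoke the well-known identification of the $L^2$-cohomology of the interior of a compact manifold with boundary. First I would note that $d_{q,\min}\subset d_{q,\max}$ by construction, so the two $L^2$-complexes indeed form a pair as in \eqref{M}-\eqref{m}. Next, the key classical input is that for $\overline{M}$ compact with boundary and $g=\overline{g}|_M$, the metric is (quasi-)isometric near $\partial\overline{M}$ to a product collar metric; consequently $L^2_{\bu,\max}$ and $L^2_{\bu,\min}$ are both Fredholm complexes, and one has the de Rham-type identifications
\[
H^q_{2,\max}(M,g)\cong H^q_{dR}(\overline{M}),\qquad H^q_{2,\min}(M,g)\cong H^q_{dR}(\overline{M},\partial\overline{M}),
\]
together with the corresponding statements for the harmonic representatives: $\ker(d_{q,\max})\cap\ker(\de_{q,\max})$ computes absolute cohomology and $\ker(d_{q,\min})\cap\ker(\de_{q,\min})$ computes relative cohomology. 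These facts are standard (they go back to the $L^2$-Hodge theory of compact manifolds with boundary) and I would cite the appropriate references rather than reprove them.

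With these inputs, Theorem \ref{TH} applied to $(L^2_{\bu,\max},L^2_{\bu,\min})$ gives
\[
H^q(L^{2'}_{\bu})\oplus \mathcal{H}^{q-1}_{m}\cong H^q_{2,\max}(M,g)\oplus H^{q-1}_{2,\max}(M,g)\cong H^q_{dR}(\overline{M})\oplus H^{q-1}_{dR}(\overline{M}),
\]
where $\mathcal{H}^{q-1}_{m}=\ker(d_{q-1,\min})\cap\ker(\de_{q-1,\min})$. The point is then that this space of ``absolutely-and-relatively harmonic'' forms vanishes: a form that is both closed and coclosed in the minimal sense satisfies Dirichlet \emph{and} Neumann boundary conditions simultaneously, and on a compact manifold with boundary the only such harmonic field is zero (this is the classical fact that $\mathcal{H}_D\cap\mathcal{H}_N=0$, proved e.g. via unique continuation, or via the observation that such a form would have to be both exact relative to $\partial\overline{M}$ and primitively closed). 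Hence $\mathcal{H}^{q-1}_{m}=\{0\}$ and we obtain $H^q(L^{2'}_{\bu})\cong H^{q-1}_{dR}(\overline{M})\oplus H^q_{dR}(\overline{M})$ directly, or equivalently invoke Corollary \ref{simply}. The relative statement is entirely parallel: Theorem \ref{TH-b} gives $H^q(L^{2''}_{\bu})\oplus\mathcal{H}^{q-1}_{m}\cong H^q_{2,\min}(M,g)\oplus H^{q-1}_{2,\max}(M,g)$, and the vanishing of $\mathcal{H}^{q-1}_{m}$ together with the two de Rham identifications yields $H^q(L^{2''}_{\bu})\cong H^{q-1}_{dR}(\overline{M})\oplus H^q_{dR}(\overline{M},\partial\overline{M})$.

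The main obstacle, and the only place where genuine geometric analysis enters, is verifying the two hypotheses that make the abstract machinery applicable: (i) that $L^2_{\bu,\max}$ and $L^2_{\bu,\min}$ are Fredholm complexes for $(M,g)=(\overline{M}\setminus\partial\overline{M},\overline{g}|_M)$, and (ii) the identification of their cohomologies with absolute and relative de Rham cohomology of $\overline{M}$. Both follow from the product structure of $\overline{g}$ near the boundary after passing to a quasi-isometric collar, together with the standard $L^2$-Hodge theory for manifolds with boundary (Friedrichs, Gaffney, Morrey); since quasi-isometric metrics induce the same maximal and minimal domains and hence the same reduced and unreduced $L^2$-cohomology, one may as well assume $\overline{g}$ is exactly a product near $\partial\overline{M}$. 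The vanishing $\mathcal{H}^{q-1}_{m}=\{0\}$ is then immediate from the fact that a smooth form satisfying both boundary conditions on a compact manifold with boundary must vanish. Everything else is a formal consequence of Theorems \ref{TH} and \ref{TH-b} and Corollaries \ref{simply}, \ref{simply-rel}.
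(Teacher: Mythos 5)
Your proposal is correct and follows essentially the same route as the paper: apply Theorems \ref{TH} and \ref{TH-b} to the pair $(L^2_{\bullet,\max},L^2_{\bullet,\min})$, identify the $L^2$-cohomologies with absolute/relative de Rham cohomology via \cite{BL}, and kill the correction term $\mathcal{H}^{q-1}_{m}$ by the classical fact that a harmonic field satisfying both Dirichlet and Neumann conditions vanishes --- which the paper proves exactly by the method you indicate, extending by zero to the double of $\overline{M}$ and invoking unique continuation. (One harmless slip in your write-up: relative $L^2$-cohomology is represented by $\ker(d_{q,\min})\cap\ker(\delta_{q,\max})$, not by $\ker(d_{q,\min})\cap\ker(\delta_{q,\min})$; the latter is precisely the space you subsequently show to be zero.)
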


\begin{proof}
According to \cite[Th. 4.1]{BL} we know that $$H^q_{2,\max}(M,g)\cong H^q_{dR}(\overline{M})\quad \mathrm{and}\quad H^q_{2,\min}(M,g)\cong H^q_{dR}(\overline{M},\partial\overline{M}).$$ In particular, since $\overline{M}$ is compact, we can apply Th. \ref{TH} and Th. \ref{TH-b}  to conclude that 
$$H^q(L^{2''}_{\bullet})\oplus \mathcal{H}^{q-1}_m(L^2_{\bullet,\max}, L^2_{\bullet,\min})\cong H^{q-1}_{dR}(\overline{M})\oplus H^q_{dR}(\overline{M},\partial\overline{M})$$ and $$H^q(L^{2''}_{\bullet})\oplus\mathcal{H}^{q-1}_m(L^2_{\bullet,\max}, L^2_{\bullet,\min})\cong H^{q-1}_{dR}(\overline{M})\oplus H^q_{dR}(\overline{M},\partial\overline{M}).$$ Therefore, we are left to prove that $$\mathcal{H}^{q-1}_m(L^2_{\bullet,\max}, L^2_{\bullet,\min})=\{0\}.$$  First, we remind  that  $$\mathcal{H}^{q-1}_{m}(L^2_{\bullet,\max}, L^2_{\bullet,\min}):=\ker(d_{q-1,\min})\cap \ker(\delta_{q-1,\min}).$$
Let now $\alpha\in \mathcal{H}^{q-1}_m(L^2_{\bullet,\max}, L^2_{\bullet,\min})$. Then, there exist sequences $\{\phi_j\}_{j\in \mathbb{N}}\subset \Omega_c^{k}(M)$ and $\{\psi_j\}_{j\in \mathbb{N}}\subset \Omega_c^{k}(M)$ such that 
\begin{enumerate}
\item $\phi_j\rightarrow \alpha$ in $L^2\Omega^{q-1}(M,g)$ and $\psi_j\rightarrow \alpha$ in $L^2\Omega^{q-1}(M,g)$, as $j\rightarrow +\infty$;
\item $d_{q-1}\phi_j\rightarrow 0$ in $L^2\Omega^{q}(M,g)$ and $\delta_{q-1}\psi_j\rightarrow 0$ in $L^2\Omega^{q-2}(M,g)$, as $j\rightarrow +\infty$;
\end{enumerate}
Let $N$ be the double of $\overline{M}$. Then $N$ contains a submanifold with boundary diffeomorphic to $\overline{M}$ that with a little abuse of notation we still denote with $\overline{M}$. Let $h$ be a Riemannian metric on $N$ such that $h|_{M}=g$ and let $\alpha'\in L^2\Omega^{q-1}(N,h)$ be defined as $\alpha'|_{M}=\alpha$ and $\alpha'|_{N\setminus M}=0$. Then we can look at $\{\phi_j\}_{j\in \mathbb{N}}$ and $\{\psi_j\}_{j\in\mathbb{N}}$ as sequences with the following properties:
\begin{enumerate}
\item $\{\phi_j\}_{j\in \mathbb{N}}\subset \Omega^{q-1}(N)$ and $\{\psi_j\}_{j\in \mathbb{N}}\subset \Omega^{q-1}(N)$ 
\item $\phi_j\rightarrow \alpha'$ in $L^2\Omega^{q-1}(N,h)$ and $\psi_j\rightarrow \alpha'$ in $L^2\Omega^{q-1}(N,h)$, as $j\rightarrow +\infty$;
\item $d_{q-1}\phi_j\rightarrow 0$ in $L^2\Omega^{q}(N,h)$ and $\delta_{q-1}\psi_j\rightarrow 0$ in $L^2\Omega^{q-2}(N,h)$, as $j\rightarrow +\infty$;
\end{enumerate}
Let $\Delta_{q-1}$ be the Hodge Laplacian of $(N,h)$ acting on forms of degree $q-1$. The above properties imply that $\Delta_{q-1}\alpha'=0$ in the weak sense and since $\Delta_{q-1}$ is elliptic we can conclude that $\alpha'$ is smooth and $\Delta_k\alpha'=0$ in the classical sense. The unique continuation principle \cite[Th. 1.8]{Kazd} implies now that $\alpha'$ vanishes on $N$ since it vanishes on $N\setminus M$. Thus, we can conclude that $\alpha$ vanishes on $M$ and so $\mathcal{H}^{q-1}_m(L^2_{\bullet,\max}, L^2_{\bullet,\min})=\{0\}$, as required.
\end{proof}

\begin{corol}
In the setting of Th. \ref{boundary} we have the following equality:
$$\dim(H^q(L^{2'}_{\bullet}))-\dim(H^q(L^{2''}_{\bullet}))=\dim(H^q_{dR}(\overline{M}))-\dim(H^q_{dR}(\overline{M},\partial{\overline{M}})).$$
\end{corol}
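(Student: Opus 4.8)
The plan is to read the statement off directly from Theorem \ref{boundary}, which already supplies the two isomorphisms
$H^q(L^{2'}_{\bullet})\cong H^{q-1}_{dR}(\overline{M})\oplus H^q_{dR}(\overline{M})$ and $H^q(L^{2''}_{\bullet})\cong H^{q-1}_{dR}(\overline{M})\oplus H^q_{dR}(\overline{M},\partial\overline{M})$. Since $\overline{M}$ is compact, all four de Rham cohomology groups occurring here are finite dimensional, so I may pass to dimensions in both isomorphisms. Subtracting the two resulting identities, the common summand $\dim(H^{q-1}_{dR}(\overline{M}))$ cancels, and what remains is precisely
$\dim(H^q(L^{2'}_{\bullet}))-\dim(H^q(L^{2''}_{\bullet}))=\dim(H^q_{dR}(\overline{M}))-\dim(H^q_{dR}(\overline{M},\partial\overline{M}))$.

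The only point needing a word is the finite-dimensionality used to subtract, and this is immediate from compactness of $\overline{M}$ (equivalently, from the fact that $L^2_{\bullet,\max}$ and $L^2_{\bullet,\min}$ are Fredholm complexes in this setting, as already invoked in the proof of Theorem \ref{boundary} together with Theorems \ref{TH} and \ref{TH-b}). Alternatively, one can present this corollary as the geometric incarnation of Corollary \ref{difference}: applying that abstract statement to the pair $(L^2_{\bullet,\max},L^2_{\bullet,\min})$ yields $\dim(H^q(L^{2'}_{\bullet}))-\dim(H^q(L^{2''}_{\bullet}))=\dim(H^q_{2,\max}(M,g))-\dim(H^q_{2,\min}(M,g))$, and one then substitutes the identifications $H^q_{2,\max}(M,g)\cong H^q_{dR}(\overline{M})$ and $H^q_{2,\min}(M,g)\cong H^q_{dR}(\overline{M},\partial\overline{M})$ coming from \cite[Th. 4.1]{BL}.

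There is no genuine obstacle: the substantive work has already been carried out, namely the vanishing $\mathcal{H}^{q-1}_m(L^2_{\bullet,\max},L^2_{\bullet,\min})=\{0\}$ established in Theorem \ref{boundary} via the unique continuation argument, and the abstract cancellation recorded in Corollary \ref{difference}. The present statement is a purely formal bookkeeping consequence of combining these, so the write-up will be a single short deduction.
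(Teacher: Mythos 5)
Your proof is correct and matches the paper's, which likewise disposes of the corollary in one line by citing Theorem \ref{boundary} together with Corollary \ref{difference}; your remark that the direct subtraction of dimensions from the two isomorphisms of Theorem \ref{boundary} already suffices (finite-dimensionality being guaranteed by compactness of $\overline{M}$) is accurate and makes the citation of Corollary \ref{difference} strictly optional.
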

\begin{proof}
This follows by Th. \ref{boundary} and Cor. \ref{difference}.
\end{proof}

Joining our result with the main result of \cite{CDGM} we get the following
\begin{corol}
In the setting of Th. \ref{boundary} denote by $(\mathrm{Harm}_{\bullet},d_{\bullet})$ the subcomplex of $(L^{2'}_{\bullet},d_{\bullet})$ given by the smooth harmonic forms on $\overline{M}$. Then the natural inclusion $$
j_\bu:(\mathrm{Harm}_{\bullet},d_{\bullet})\rightarrow (L^{2'}_{\bullet},d_{\bullet})$$ is a quasi-isomorphism of complexes.
\end{corol}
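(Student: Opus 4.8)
The plan is to verify that $j_\bu$ is a morphism of complexes, observe that both complexes have finite cohomology of the same dimension in each degree, and then prove that $j_\bu$ is surjective on cohomology; a dimension count then forces $j_\bu$ to be a quasi-isomorphism. For the first point, a smooth $q$-form $\zeta$ with $\Delta\zeta=0$ on the compact manifold $\overline M$, together with all of its derivatives, is bounded, hence lies in $L^2\Omega^q(M,g)$; integrating by parts against forms in $\Omega^{q+1}_c(M)$ (no boundary term appears) shows $\zeta\in\DS(d_{q,\max})$ with $d_{q,\max}\zeta=d\zeta$, and likewise $\delta_{q,\max}\zeta=\delta\zeta$, so $\zeta\in\DS(P_q)$ and $P_q\zeta=\delta d\zeta+d\delta\zeta=\Delta\zeta=0$. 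Thus $\mathrm{Harm}^q\subseteq\ker(P_q)$; since moreover $\Delta(d\zeta)=d\Delta\zeta=0$, the exterior differential maps $\mathrm{Harm}^\bu$ into itself and restricts there to $d'_\bu$, so $j_\bu$ is a chain map.

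Next, since $\overline M$ is compact the complex $L^2_{\bu,\max}$ is Fredholm, so by Cor. \ref{finiteness} each $H^q(L^{2'}_\bu)$ is finite dimensional, and Th. \ref{boundary} gives $\dim H^q(L^{2'}_\bu)=\dim H^{q-1}_{dR}(\overline M)+\dim H^q_{dR}(\overline M)$; by the main theorem of \cite{CDGM} the same equality holds for $\dim H^q(\mathrm{Harm}_\bu)$. Hence $H^q(\mathrm{Harm}_\bu)$ and $H^q(L^{2'}_\bu)$ have equal finite dimension in each degree, and it suffices to show that $j_\bu^*\colon H^q(\mathrm{Harm}_\bu)\to H^q(L^{2'}_\bu)$ is surjective.

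To this end, let $\omega\in\ker(d'_q)=\ker(P_q)\cap\ker(d_{q,\max})$. As in the proof of Prop. \ref{firststep}, the strong Hodge decomposition of the Fredholm complex $L^2_{\bu,\max}$ writes $\omega=\omega_1+\omega_2$ with $\omega_1\in\ker(d_{q,\max})\cap\ker(\delta_{q,\min})$ and $\omega_2\in\ker(P_q)\cap\Im(d_{q-1,\max})$ (the $\Im(\delta_{q+1,\min})$--component vanishes because $\omega$ is $d_{q,\max}$--closed). By elliptic boundary regularity for the Hodge Laplacian — an element of $\ker(d_{q,\max})\cap\ker(\delta_{q,\min})$ solves $\Delta\omega_1=0$ together with the boundary conditions defining $\DS(\delta_{q,\min})$, which is an elliptic boundary value problem — $\omega_1$ is smooth up to $\partial\overline M$, so $\omega_1\in\mathrm{Harm}^q$ and $d\omega_1=0$. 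For $\omega_2$: by Prop. \ref{morphism}, $c:=\delta_{q,\max}\omega_2\in\ker(d_{q-1,\max})$ and $\hat\de_q[\omega_2]=[c]$ in $H^{q-1}_{2,\max}(M,g)\cong\ker(d_{q-1,\max})\cap\ker(\delta_{q-1,\min})$; let $\chi$ be the harmonic representative of $[c]$ under this isomorphism, which is smooth on $\overline M$ by the same regularity, so $\chi\in\mathrm{Harm}^{q-1}$ and $d\chi=0$. Solving the (elliptic) Dirichlet problem $\Delta\rho=\chi$ on $\overline M$ produces a smooth $\rho\in\Omega^{q-1}(\overline M)$; then $d\rho\in\mathrm{Harm}^q$ is closed and lies in $\ker(P_q)\cap\Im(d_{q-1,\max})$, and, computing in $H^{q-1}_{2,\max}(M,g)=\ker(d_{q-1,\max})/\Im(d_{q-2,\max})$,
\[
\hat\de_q\big([d\rho]-[\omega_2]\big)=\big[\delta_{q,\max}(d\rho)\big]-[c]=\big[\Delta\rho-d(\delta\rho)\big]-[c]=[\chi]-[c]=0 .
\]
Since $\hat\de_q$ is injective (by the proof of Th. \ref{crucial}), $d\rho-\omega_2\in\Im(d'_{q-1})$, i.e.\ $[\omega_2]=[d\rho]$ in $H^q(L^{2'}_\bu)$. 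Consequently $\omega_1+d\rho\in\mathrm{Harm}^q$ is $d$--closed and $j_\bu^*[\omega_1+d\rho]=[\omega_1]+[d\rho]=[\omega]$, which proves surjectivity, hence that $j_\bu$ is a quasi-isomorphism.

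The part I expect to be most delicate is the input from Hodge theory on the compact manifold with boundary $\overline M$: the boundary regularity of the forms in $\ker(d_{q,\max})\cap\ker(\delta_{q,\min})$ and the solvability of $\Delta\rho=\chi$ with $\rho$ smooth on $\overline M$, both of which must be invoked with the boundary conditions matching the minimal/maximal extensions. Once these classical facts are in place, the remaining content is the bookkeeping with the maps $\hat\de_q$ and the Hodge decompositions of Sections \ref{s4}--\ref{s6}.
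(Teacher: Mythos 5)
Your proof is correct, but it takes a genuinely more complete route than the paper's, whose entire argument is the two‑line remark that the claim ``follows by Th.~\ref{boundary} and \cite[Th.~1]{CDGM}''. That citation only matches the two dimension counts $\dim H^q(\mathrm{Harm}_{\bu})=\dim H^q(L^{2'}_{\bu})=\dim H^{q}_{dR}(\overline M)+\dim H^{q-1}_{dR}(\overline M)$, i.e.\ it yields an \emph{abstract} isomorphism of cohomology groups; as you rightly observe, one must still show that the inclusion itself induces it, and your surjectivity argument for $j_\bu^*$ is exactly the missing step. Your mechanism is sound: split a $d'_q$-closed element as $\omega_1+\omega_2$ with $\omega_1\in\ker(d_{q,\max})\cap\ker(\de_{q,\min})$ and $\omega_2\in\ker(P_q)\cap\Im(d_{q-1,\max})$ (legitimate since $L^2_{\bu,\max}$ is Fredholm, so the closures in Prop.~\ref{firststep} can be dropped), note that $\omega_1$ is already a smooth harmonic field, replace $\omega_2$ by $d\rho$ with $\Delta\rho=\chi$ the harmonic representative of $\hat\de_q[\omega_2]$, and invoke the injectivity of $\hat\de_q$ established inside the proof of Th.~\ref{crucial} to conclude $[\omega_2]=[d\rho]$ in $H^q(L^{2'}_{\bu})$. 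The only external inputs are the two classical facts you flag: boundary regularity of harmonic fields with absolute boundary conditions (elements of $\ker(d_{q,\max})\cap\ker(\de_{q,\min})$ are smooth up to $\partial\overline M$), and solvability of the Dirichlet problem $\Delta\rho=\chi$ with $\rho$ smooth on $\overline M$ — the latter requires the vanishing of Dirichlet harmonic fields, which follows from the same extension‑to‑the‑double and unique‑continuation argument the paper already uses in the proof of Th.~\ref{boundary}; both should be cited (e.g.\ Morrey, or Schwarz's monograph on the Hodge decomposition for manifolds with boundary) but are standard. What the paper's terse proof ``buys'' is brevity at the cost of leaving the naturality of the isomorphism unaddressed; what yours buys is an actual verification that $j_\bu^*$ is onto, hence bijective by the dimension count. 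Your preliminary check that smooth harmonic forms lie in $\ker(P_q)$ and are stable under $d$ is really the content of the assertion, already built into the statement, that $(\mathrm{Harm}_{\bu},d_{\bu})$ is a subcomplex of $(L^{2'}_{\bu},d'_{\bu})$; it is worth recording but is not the substantive part.
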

\begin{proof}
This follows by Th. \ref{boundary} and \cite[Th. 1]{CDGM}.
\end{proof}

\subsection{Applications to Thom-Mather stratified spaces}
\label{s8.3}

Since it will be used in the definition below we start by recalling that, given a topological space $Z$, $C(Z)$ stands for the cone over $Z$ that is, $C(Z)=Z\times [0,2)/\sim$ where $(p,t)\sim (q,r)$ if and only if $r=t=0$. We have now the following

\begin{defi}   
\label{thom}
 A smoothly Thom-Mather stratified space $X$  of dimension $m$  is a metrizable, locally compact, second countable space which admits a locally finite decomposition into a union of locally closed strata $\mathfrak{G}=\{Y_{\alpha}\}$, where each $Y_{\alpha}$ is a smooth, open and connected manifold, with dimension depending on the index $\alpha$. We assume the following:
\begin{enumerate}
\item[(i)] If $Y_{\alpha}$, $Y_{\beta} \in \mathfrak{G}$ and $Y_{\alpha} \cap \overline{Y}_{\beta} \neq \emptyset$ then $Y_{\alpha} \subset \overline{Y}_{\beta}$
\item[(ii)]  Each stratum $Y$ is endowed with a set of control data $T_{Y} , \pi_{Y}$ and $\rho_{Y}$ ; here $T_{Y}$ is a neighbourhood of $Y$ in $X$ which retracts onto $Y$, $\pi_{Y} : T_{Y} \rightarrow Y$
is a fixed continuous retraction and $\rho_{Y}: T_{Y}\rightarrow [0, 2)$  is a continuous  function in this tubular neighbourhood such that $\rho_{Y}^{-1}(0) = Y$. Furthermore, we require that if $Z \in \mathfrak{G}$ and $Z \cap T_{Y}\neq \emptyset$  then $(\pi_{Y} , \rho_{Y} ) : T_{Y} \cap Z \rightarrow Y\times [0,2) $ is a proper smooth submersion.
\item[(iii)] If $W, Y,Z \in \mathfrak{G}$, and if $p \in T_{Y} \cap T_{Z} \cap W$ and $\pi_{Z}(p) \in T_{Y} \cap Z$ then
$\pi_{Y} (\pi_{Z}(p)) = \pi_{Y} (p)$ and $\rho_{Y} (\pi_{Z}(p)) = \rho_{Y} (p)$.
\item[(iv)] If $Y,Z \in \mathfrak{G}$, then
$Y \cap \overline{Z} \neq \emptyset \Leftrightarrow T_{Y} \cap Z \neq \emptyset$ ,
$T_{Y} \cap T_{Z} \neq \emptyset \Leftrightarrow Y\subset \overline{Z}, Y = Z\ or\ Z\subset \overline{Y} .$
\item[(v)]  For each $Y \in \mathfrak {G}$, the restriction $\pi_{Y} : T_{Y}\rightarrow Y$ is a locally trivial fibration with fibre the cone $C(L_{Y})$ over some other stratified space $L_{Y}$ (called the link over $Y$ ), with atlas $\mathcal{U}_{Y} = \{(\phi,\mathcal{U})\}$ where each $\phi$ is a trivialization
$\pi^{-1}_{Y} (U) \rightarrow U \times C(L_{Y} )$, and the transition functions are stratified isomorphisms  which preserve the rays of each conic
fibre as well as the radial variable $\rho_{Y}$ itself, hence are suspensions of isomorphisms of each link $L_{Y}$ which vary smoothly with the variable $y\in U$.
\item[(vi)] For each $j$ let $X_{j}$ be the union of all strata of dimension less or equal than $j$, then 
$$X\setminus X_{m-1}\ \mathrm{is\ dense\ in}\ X$$
\end{enumerate}
\end{defi}

The \emph{depth} of a stratum $Y$ is largest integer $k$ such that there exists a chain of strata $Y=Y_{k},...,Y_{0}$ such that $Y_{j}\subset \overline{Y_{j-1}}$ for $1\leq j\leq k.$ A stratum of maximal depth is always a closed subset of $X$.  The  maximal depth of any stratum in $X$ is called the \emph{depth of $X$} as stratified spaces. Note that if $\mathrm{depth}(X)=1$ then for any singular stratum $Y\in \mathfrak{G}$ the corresponding link $L_Y$ is a smooth manifold. Consider now the filtration
\begin{equation}
X = X_{m}\supset X_{m-1}\supset X_{m-2}\supset...\supset X_{0}.
\label{pippo}
\end{equation}
 We refer to the open subset $X\setminus X_{m-2}$ as the regular set of $X$ while the union of all other strata is the singular set of $X$,
$$\mathrm{reg}(X):=X\setminus \mathrm{sing}(X)\quad \text{with}\quad \mathrm{sing}(X):=\bigcup_{Y\in \mathfrak{G}, \mathrm{depth}(Y)>0 }Y. $$
The dimension of $X$ is by definition the dimension of $\mathrm{reg}(X)$. Given two smoothly Thom-Mather  stratified spaces  $X$ and $X'$,  a stratified isomorphism between them is a
homeomorphism $F: X\rightarrow X'$ which carries the open strata of $X$ to the open strata of $X'$
diffeomorphically, and such that $\pi'_{F(Y )}\circ  F = F \circ \pi_Y$ , $\rho'_{F(Y)}\circ F=\rho_Y$  for all $Y\in \mathfrak{G}(X)$.
For more details, properties and comments we refer to \cite{ALMP}, \cite{BHS}, \cite{RrHS},  \cite{JMA}. 
We recall in addition that important examples of smoothly Thom-Mather stratified spaces are provided by manifolds with boundary, complex projective varieties  and quotient spaces $M/G$, with $M$ a manifold and $G$ a Lie group acting properly on $M$.\\ 
As a next step we introduce the class of smooth Riemmanian metrics on $\mathrm{reg}(X)$  we will work with. The definition is given by induction on the depth of $X$.  We label by $\hat{c}:=(c_2,...,c_m)$ a $(m-1)$-tuple of non negative real numbers and we recall that  two Riemannian metrics $g$ and $h$ on a manifold $M$  are said to be \emph{quasi-isometric}, briefly $g\sim h$, if there exists a real number $c>0$ such that $c^{-1}h\leq g\leq ch$.

\begin{defi}
\label{iter}
Let $X$ be a smoothly Thom-Mather-stratified space and let $g$ be a Riemannian metric on $\mathrm{reg}(X)$. If $\mathrm{depth}(X)=0$, that is $X$ is a smooth manifold, a $\hat{c}$-iterated conic metric is understood to be any smooth Riemannian metric on $X$. Suppose now that $\mathrm{depth}(X)=k$ and that the definition of $\hat{c}$-iterated conic metric is given in the case $\mathrm{depth}(X)\leq k-1$; then we call a smooth Riemannian metric $g$ on $\mathrm{reg}(X)$ a  $\hat{c}$-\emph{iterated conic metric}  if it satisfies the following properties:
\begin{itemize}
\item Let $Y$ be a stratum of $X$ such that $Y\subset X_{i}\setminus  X_{i-1}$; by   definition \ref{thom} for each $q\in Y$ there exist an open neighbourhood $U$ of $q$ in $Y$ such that 
$$
\phi:\pi_{Y}^{-1}(U)\longrightarrow U\times C(L_{Y})
$$
 is a stratified isomorphism; in particular, 
$$
\phi:\pi_{Y}^{-1}(U)\cap \mathrm{reg}(X)\longrightarrow U\times \mathrm{reg}(C(L_{Y}))
$$
is a smooth diffeomorphism. Then, for each $q\in Y$, there exists one of these trivializations $(\phi,U)$ such that $g$ restricted on $\pi_{Y}^{-1}(U)\cap \mathrm{reg}(X)$ satisfies the following properties:
\begin{equation} 
\label{yhn}
(\phi^{-1})^{*}(g|_{\pi_{Y}^{-1}(U)\cap \mathrm{reg}(X)})\sim  d r^2+h_{U}+r^{2c_{m-i}}g_{L_{Y}}
\end{equation}
 where $m$ is the dimension of $X$, $h_{U}$ is a Riemannian metric defined over $U$ and  $g_{L_{Y}}$ is a $(c_2,...,c_{m-i-1})$-\emph{iterated edge metric}  on $\mathrm{reg}(L_{Y})$, $dr^2+h_{U}+r^{2c_{m-i}}g_{L_{Y}}$ is a Riemannian metric of product type on $U\times \mathrm{reg}(C(L_{Y}))$ and with $\sim$ we mean \emph{quasi-isometric}. 
\end{itemize}
\end{defi} 

Note that if $Y\subset X_{m}\setminus X_{m-1}$ then \eqref{yhn} simplifies to 
$$(\phi^{-1})^{*}(g|_{\pi_{Y}^{-1}(U)\cap \mathrm{reg}(X)})\sim  d r^2+h_{U}$$ because in this case the link is a point. The existence of this kind of Riemannian metrics is unobstructed, as follows from the next

\begin{prop}
Let $X$ be a smoothly Thom-Mather-stratified space of dimension $m$. For any $(m-1)$-tuple  of positive numbers $\hat{c}=(c_2,...,c_m)$, there exists  a smooth Riemannian metric on $\mathrm{reg}(X)$ which is a $\hat{c}$-iterated edge metric.
\end{prop}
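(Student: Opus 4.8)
The plan is to argue by induction on $\mathrm{depth}(X)$, constructing the metric inside the trivializing charts furnished by Definition \ref{thom}(v) and then gluing. Throughout I would invoke the elementary fact that if finitely many metrics on an open set are each quasi-isometric to one common model $g_0$, then every convex combination of them is again quasi-isometric to $g_0$ (with constant the maximum of the individual ones). Hence, once the local models are arranged to be pairwise quasi-isometric on overlaps, a partition of unity subordinate to a locally finite cover will assemble them into a global smooth metric having the prescribed local form. The base case $\mathrm{depth}(X)=0$ is immediate: $X$ is then a smooth manifold and, by Definition \ref{iter}, any smooth Riemannian metric is a $\hat{c}$-iterated edge metric, and one exists by the usual partition of unity argument.

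For the inductive step, assume the statement for stratified spaces of depth $\le k-1$ and let $\mathrm{depth}(X)=k$. For each singular stratum $Y\subset X_i\setminus X_{i-1}$, the link $L_Y$ is again a smoothly Thom-Mather stratified space, of depth $\le k-1$ and compact (the latter because $(\pi_Y,\rho_Y)$ is proper, Definition \ref{thom}(ii)); by the inductive hypothesis I would fix on $\mathrm{reg}(L_Y)$ a $(c_2,\dots,c_{m-i-1})$-iterated edge metric $g_{L_Y}$. I would also fix an arbitrary smooth metric $h_Y$ on each stratum $Y$ (legitimate, since each stratum is an ordinary manifold), cover $Y$ by relatively compact trivializing neighbourhoods $U$, and on each chart $\phi\colon\pi_Y^{-1}(U)\cap\mathrm{reg}(X)\to U\times\mathrm{reg}(C(L_Y))$ take the pulled-back product model $dr^2+h_Y|_U+r^{2c_{m-i}}g_{L_Y}$, which is exactly of the form \eqref{yhn}. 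Over a single stratum these local models agree up to quasi-isometry on overlaps: by Definition \ref{thom}(v) the transition functions preserve the rays and the radial variable $\rho_Y=r$ and are suspensions of a smoothly varying family of isomorphisms of the compact space $L_Y$, so pulling $g_{L_Y}$ back along them stays uniformly quasi-isometric to $g_{L_Y}$ while the $dr^2$ and $h_Y$ parts change by a bounded factor. A partition of unity on $Y$ then glues these into a single metric $g_Y$ on $T_Y\cap\mathrm{reg}(X)$ of the form \eqref{yhn} near $Y$.

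Finally I would glue the $g_Y$, together with an arbitrary smooth metric on $\mathrm{reg}(X)$, using a partition of unity subordinate to the cover $\{\mathrm{reg}(X)\}\cup\{T_Y\}_{Y\in\mathfrak{G}}$, which is locally finite by hypothesis. By Definition \ref{thom}(iv), $T_Y\cap T_Z\ne\emptyset$ forces one of the two strata to lie in the closure of the other, so it suffices to compare, on $T_Y\cap T_Z$ with $Z\subset\overline{Y}$ of strictly larger depth, the model $g_Z$ with $g_Y$ after trivializing over $Y$. There $Z$ meets the conic fibre $C(L_Y)$ precisely in a bundle over $U$ of singular strata of $L_Y$ (Definition \ref{thom}(ii)-(iii)); the inductive hypothesis says $g_{L_Y}$ is genuinely iterated, hence near those strata of $L_Y$ it already has exactly the conic form of Definition \ref{iter}, and the compatibility axioms (iii)-(v) — agreement of the nested retractions and radial functions, and the suspension structure of the conic fibrations — force this form to match the model $g_Z$ up to quasi-isometry. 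Granting that, the patching lemma completes the construction, and smoothness on $\mathrm{reg}(X)$ is automatic since the result is a locally finite convex combination of smooth metrics.

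\emph{The main obstacle} is exactly this last compatibility check: one must trace carefully how the control data $(T_Y,\pi_Y,\rho_Y)$ of $X$ restrict to control data of the link $L_Y$, so that the conic model attached to a deep stratum, read off in a trivializing chart of a shallower stratum containing it in its closure, is seen to coincide — up to quasi-isometry — with the iterated structure carried by $g_{L_Y}$ near the singular locus of $L_Y$. This is the point where the Thom-Mather axioms are genuinely used, and where the induction on depth is made to close; everything else is bookkeeping with partitions of unity.
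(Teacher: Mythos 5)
The paper does not actually prove this proposition: its ``proof'' is a one-line citation to \cite{BHS} and \cite{ALMP} (the latter for $\hat{c}=(1,\dots,1)$). So your proposal cannot be compared with an argument in the text; what it can be compared with is the standard construction in those references, and there your outline is essentially the right one: induction on $\mathrm{depth}(X)$, product models $dr^2+h_U+r^{2c_{m-i}}g_{L_Y}$ in the trivializing charts of Definition \ref{thom}(v), stability of quasi-isometry under convex combinations, and a partition of unity subordinate to $\{\mathrm{reg}(X)\}\cup\{T_Y\}$. The base case and the gluing over a single stratum are handled correctly, and you correctly isolate where the Thom--Mather axioms (ii)--(v) are genuinely needed.

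That said, the one step you flag as ``the main obstacle'' and then dispatch with ``Granting that'' is precisely the nontrivial content of the proposition, and as written it remains a gap. Two points need actual verification there. First, you must check that the restriction of the control data of $X$ to a link $L_Y$ really endows $L_Y$ with the structure of a smoothly Thom--Mather stratified space of strictly smaller depth, with strata $Z\cap L_Y$ for $Z\supset Y$ in its closure relations; this is what licenses the inductive hypothesis on $L_Y$ and is a standard but not automatic consequence of axioms (ii)--(iv). Second, and more delicately, the comparison of $g_Z$ with $g_Y$ on $T_Y\cap T_Z$ involves matching \emph{two} radial variables, $\rho_Y$ and $\rho_Z$, and the exponents $c_{m-i}$ and $c_{m-j}$ attached to strata of different codimensions; the compatibility $\rho_Y\circ\pi_Z=\rho_Y$ of axiom (iii) is what makes the iterated cone-over-cone model quasi-isometric to the model built from $L_Y$ with its induced iterated metric, but this requires an explicit computation in the double cone coordinates, not just the qualitative statement that the retractions agree. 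Without that computation the claim that the two models ``match up to quasi-isometry'' is an assertion, not a proof. Since the paper itself outsources exactly this to \cite{BHS} and \cite{ALMP}, your sketch is a faithful reconstruction of the intended argument, but to stand alone it would need the compatibility computation carried out.
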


\begin{proof}
See \cite{BHS} or \cite{ALMP}, the latter in the case $\hat{c}=(1,...,1,...,1)$.
\end{proof}

 As we will  see in a moment the importance of this class of metrics stems from its deep connection with the topology of $X$. In particular the maximal/minimal $L^2$-cohomology groups of $\mathrm{reg}(X)$ with respect  to a $\hat{c}$-iterated conic metric are isomorphic to certain intersection cohomology  groups of $X$.\\

\noindent We are going now to provide a very succinct introduction to intersection cohomology. We will be very brief and we refer to the seminal papers of  Goresky  and  MacPherson \cite{GoMac}-\cite{GoMac2} and to the  monographs \cite{Bana}, \cite{Friedman-book} and \cite{KiWoo} for  in-depth treatments of this subject. First we start by recalling the notions of perversity and general perversity.

\begin{defi}
\label{perv}
Let $X$ be a smoothly Thom-Mather stratified space. A general perversity  on $X$ is any function
\begin{equation}
\label{ujn}
 p:\{Singular\ Strata\ of\ X\}\rightarrow \mathbb{Z}.
\end{equation}
If $X_{m-1}=X_{m_2}$, $p$ depends only on the codimension of the stratum and in addition $$p(2)=0\ and\ p(k)\leq p(k+1)\leq p(k)+1$$ with $k\in \{2,3,4,...\}$ then we simply call it perversity.
\end{defi}

\noindent Example of perversities are the zero perversity $0(k)=0$, the top perversity $t(k)=k-2$, the upper middle perversity $\overline{m}(k)=[\frac{k-1}{2}]$ and the lower middle perversity $\underline{m}(k)=[\frac{k-2}{2}]$, with $[\bullet]$ denoting the integer part of $\bullet$. Note that $\underline{m}(k)=\overline{m}(k)$ when $k$ is even. Given a (general) perversity $p$, the {\em dual (general) perversity} is defined as $q:=t-p.$\\

\noindent Let us consider now the standard $i$-simplex $\Delta_{i}\subset \mathbb{R}^{i+1}$.  The $j$-skeleton of $\Delta_{i}$ is the set of $j$-subsimplices. Given a smoothly Thom-Mather stratified space $X$ we say that a singular $i$-simplex in $X$, i.e. a continuous map $\sigma:\Delta_{i}\rightarrow X$, is $p$-allowable if
$$ 
\sigma^{-1}(X_{m-k}-X_{m-k-1})\subset \{(i-k+p(k))-\mathrm{skeleton\ of}\ \Delta_{i}\}\ \mathrm{for\ all}\ k\geq 2.
$$
Note that to each singular stratum $Y\subset X$ $p$ assigns the value corresponding to $\mathrm{cod}(Y)$. Given a field $F$, the elements of the space $I^{p}S_{i}(X,F)$ are defined as the finite linear combinations with coefficients in $F$ of singular $i$-simplex $\sigma:\Delta_{i}\rightarrow X$ such that $\sigma$ and $\partial \sigma$ are both $p$-allowable. It is clear that $(I^{p}S_{i}(X,F), \partial_{i})$ is a complex and the  singular intersection homology groups with respect to the perversity $p$, $I^{p}H_{i}(X,F)$, are defined as  the homology groups of this complex.\\

We show now how to associate a general perversity to a given $\hat{c}$-iterated conic metric.

\begin{defi} 
\label{metric-perversity}
Let $X$ be a smoothly stratified space with a Thom-Mather stratification and let $g$ be a $\hat{c}$-iterated conic metric on $\mathrm{reg}(X)$. Then the general perversity $p_{g}$ associated to $g$ is:
\begin{equation}p_{g}(Y):= Y\longmapsto  \left\{
\begin{array}{lll}
0 & l_{Y}=0 \\
\frac{l_{Y}}{2}+[[\frac{1}{2c_{m-i}}]] & l_{Y}\ even,\ l_{Y}\neq 0\ and\ Y\subset X_{m-i}\setminus X_{m-i-1} \\
\frac{l_{Y}-1}{2}+[[\frac{1}{2}+\frac{1}{2c_{m-i}}]]& l_{Y}\ odd\ and\ Y\subset X_{m-i}\setminus X_{m-i-1}
\end{array}
\right.
\end{equation}
with $l_{Y}=dimL_{Y}$ and, given any real and positive number $x$, $[[x]]$ is the greatest integer strictly less than $x$. Besides $p_g$ we consider also the corresponding dual perversity $q_g$, that is $$q_g:=t-p_g$$ with $t$ the top perversity.
\end{defi}

Note that $p_g=\overline{m}$ and  $q_g=\underline{m}$ provided $\hat{c}=(1,...,1)$.\\

 We can now recall the isomorphism theorem between the  $L^2$-cohomology and intersection cohomology. For the definition as well as more details concerning the notion of stratified coefficient system we refer to \cite{IGP}.

\begin{theo}
\label{L2Int}
Let $X$ be a compact smoothly oriented Thom-Mather stratified space of dimension $m$ and let $g$ be a $\hat{c}$-iterated conic metric on $\mathrm{reg}(X)$. We have the following isomorphisms for each $k=0,...,m$:
$$H^k_{2,\max}(\mathrm{reg}(X),g)\cong I^{q_g}H^k(X,\mathcal{R}_0)$$
and 
$$H^k_{2,\min}(\mathrm{reg}(X),g)\cong I^{p_g}H^k(X,\mathcal{R}_0)$$
with $\mathcal{R}_0$ the stratified coefficient system given by the pair of local coefficient systems consisting of $(X-X_{n-1})\times \mathbb{R}$ over $X-X_{n-1}$ where the fibers $\mathbb{R}$ have the discrete topology  and the constant $0$ system on $X_{n-1}$.
\end{theo}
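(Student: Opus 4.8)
The plan is to prove both isomorphisms at once by induction on $\mathrm{depth}(X)$, identifying the sheafified maximal (resp.\ minimal) $L^2$-de Rham complex of $\mathrm{reg}(X)$ with the Deligne-type complex of sheaves on $X$ computing $I^{q_g}H^\bullet(X,\mathcal{R}_0)$ (resp.\ $I^{p_g}H^\bullet(X,\mathcal{R}_0)$); this follows the scheme of \cite{ALMP} and \cite{BHS}. If $\mathrm{depth}(X)=0$ there is nothing to prove: $\mathrm{reg}(X)=X$ is a closed manifold, $d_{k,\max}=d_{k,\min}$, so both $L^2$-cohomologies reduce to $H^k_{dR}(X)$, and, $X$ having no singular strata, both right-hand sides are $H^k(X;\R)$ as well.

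For the inductive step I would assume the theorem for every compact Thom-Mather space of depth $<d$ and treat $\mathrm{depth}(X)=d$. First I would sheafify: to an open $U\subseteq X$ I associate $\mathcal{L}^k_{2,\max}(U):=\{\omega\in L^2\Omega^k(\mathrm{reg}(U),g):\, d_{k,\max}\omega\in L^2\Omega^{k+1}(\mathrm{reg}(U),g)\}$, and similarly $\mathcal{L}^k_{2,\min}$, obtaining complexes of sheaves $\mathcal{L}^\bullet_{2,\max}$, $\mathcal{L}^\bullet_{2,\min}$ on $X$. Using compactness of $X$, a Mayer--Vietoris/hypercohomology spectral sequence argument, and a fine-resolution (softness) argument for these sheaves — the delicate point here being the control of cutoff functions near $\mathrm{sing}(X)$ — I would show that the hypercohomology of $\mathcal{L}^\bullet_{2,\max}$ (resp.\ $\mathcal{L}^\bullet_{2,\min}$) over $X$ is $H^\bullet_{2,\max}(\mathrm{reg}(X),g)$ (resp.\ $H^\bullet_{2,\min}(\mathrm{reg}(X),g)$). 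By the Goresky--MacPherson axiomatic characterisation of intersection cohomology (see \cite{GoMac2}), it then remains to check that these complexes are cohomologically constructible for the stratification, normalised on $\mathrm{reg}(X)$, and satisfy the support and cosupport conditions dictated by $q_g$ (resp.\ $p_g$) along every singular stratum.

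All of this reduces to a local computation near a point $q$ of a stratum $Y\subset X_{m-i}\setminus X_{m-i-1}$. By Definition \ref{iter} and the quasi-isometry invariance of $L^2$-cohomology, near $q$ I may replace $g$ by the model metric $h_U+dr^2+r^{2c_{m-i}}g_{L_Y}$ on $U\times\mathrm{reg}(C(L_Y))$, where $L_Y$ is a compact Thom-Mather space of depth $<d$. A K\"unneth argument — the germ at a point of the flat factor $U$ being trivial outside degree zero — reduces the stalk computation to the $L^2$-cohomology of the truncated metric cone $\bigl((0,\varepsilon)\times\mathrm{reg}(L_Y),\, dr^2+r^{2c_{m-i}}g_{L_Y}\bigr)$. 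Here the inductive hypothesis is used essentially: $L_Y$ being compact of smaller depth, its maximal and minimal $L^2$-de Rham complexes are Fredholm and admit Hodge decompositions, so the $L^2$-complex of the truncated cone decomposes along the Hodge decomposition of $\mathrm{reg}(L_Y)$ and, on each summand, reduces to an explicit radial problem. The resulting cone computation expresses the stalk cohomology of $\mathcal{L}^\bullet_{2,\max}$ (resp.\ $\mathcal{L}^\bullet_{2,\min}$) at $q$ in terms of $H^\bullet_{2,\max}(\mathrm{reg}(L_Y),g_{L_Y})$ (resp.\ $H^\bullet_{2,\min}$), truncated at a degree governed by $c_{m-i}$ and $\dim L_Y$; one then checks that this truncation degree, together with the dual (cosupport) computation, reproduces exactly the conditions prescribed by $q_g$, resp.\ $p_g$, of Definition \ref{metric-perversity} — the integers $[[\tfrac{1}{2c_{m-i}}]]$ and $[[\tfrac12+\tfrac{1}{2c_{m-i}}]]$ appearing there are precisely what is forced by the $L^2$-integrability requirement $\int_0^\varepsilon r^{\alpha}\,dr<\infty$ on the relevant radial exponents combined with the closed/coclosed constraints. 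Matching this with the Deligne construction identifies $\mathcal{L}^\bullet_{2,\max}$ with the complex for $q_g$ and $\mathcal{L}^\bullet_{2,\min}$ with the complex for $p_g$, and taking hypercohomology over $X$ completes the induction.

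The step I expect to be the main obstacle is the cone computation with a general exponent $c_{m-i}$, and above all making separation of variables rigorous when $L_Y$ is itself singular: one needs finite-dimensionality of $H^\bullet_{2,\max/\min}(\mathrm{reg}(L_Y),g_{L_Y})$, a precise understanding of how the maximal and minimal extensions on the link interact with the radial direction, and the genuine splitting of the truncated cone's $L^2$-complex along the link's Hodge decomposition — it is here that the inductive hypothesis does real work rather than mere bookkeeping of strata. By comparison, the quasi-isometric replacement of the metric and the passage from the $L^2$-complex to its hypercohomology should be comparatively routine.
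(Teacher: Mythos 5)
The paper does not actually prove this theorem: its ``proof'' is a pure attribution, citing Cheeger for the case $X_{m-1}=X_{m-2}$ with $\hat{c}=(1,\dots,1)$, Nagase for $X_{m-1}=X_{m-2}$ with $p_g$ a classical perversity, and \cite{FBe} for the general form stated here. Your outline is, in its architecture, exactly the strategy of those references (sheafify the maximal/minimal $L^2$-complexes, show the hypercohomology recovers $H^\bullet_{2,\max/\min}$, verify the Deligne axioms by a local cone computation, induct on depth), so as a roadmap it is the right one and correctly locates where the work is.

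As a proof, however, it has genuine gaps beyond the ones you flag yourself. First, the cone computation is not an ``obstacle you expect'': it is the theorem. The entire content of Definition \ref{metric-perversity} --- the integers $[[\tfrac{1}{2c_{m-i}}]]$ and $[[\tfrac12+\tfrac{1}{2c_{m-i}}]]$, and the case split on the parity of $l_Y$ --- is the output of that computation, and deriving it requires the precise statement of how $d_{\max}$ and $d_{\min}$ on the truncated cone restrict to each summand of the link's Hodge decomposition (including the non-reduced pieces $\Im(d)$ and $\Im(\delta)$ of the link complex, which is where the parity of $l_Y$ and the distinction between $p_g$ and $q_g$ enter); asserting that ``one then checks'' this matches Definition \ref{metric-perversity} is circular at the level of detail you give. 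Second, you invoke ``the Goresky--MacPherson axiomatic characterisation'' from \cite{GoMac2}, but $p_g$ and $q_g$ are \emph{general} perversities in the sense of Definition \ref{perv} (they depend on the stratum and need not satisfy the Goresky--MacPherson growth conditions), and for such perversities the classical Deligne axioms and their uniqueness theorem fail; this is precisely why the statement uses Friedman's stratified coefficient system $\mathcal{R}_0$ and why \cite{IGP} and \cite{FBe} are needed. You mention $\mathcal{R}_0$ in passing but then reason as if the classical axiomatics applied; the correct move is Friedman's modified Deligne construction and its characterisation, and without that substitution the ``matching with the Deligne construction'' step does not go through. Third, the softness/fineness of the presheaves $\mathcal{L}^k_{2,\max/\min}$ near the singular set is nontrivial (multiplication by a cutoff must preserve the maximal, and especially the minimal, domain with $L^2$ control on the error term $d\chi\wedge\omega$); you name this as ``delicate'' but it is a step that must be carried out, not assumed. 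None of these objections says your strategy is wrong --- it is the strategy of \cite{FBe} --- but the proposal as written is a plan for a proof rather than a proof.
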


\begin{proof}
When $X_1=X_2$ and $\hat{c}=(1,....,1)$ this theorem was first proved by Cheeger \cite{JC}. In the case $X_1=X_2$ and $p_g$ is a perversity the proof is due to Nagase \cite{Masa}. Finally the proof of the above general form is given in \cite{FBe}.
\end{proof}

We point out that in Th. \ref{L2Int}, when $X_{m-1}=X_{m-2}$ and $p_g$ is a perversity, then the vector spaces on the right hand sides boil down to 
$I^{q_g}H^k(X,\mathbb{R})$ and $I^{q_g}H^k(X,\mathbb{R})$, see \cite[p. 110]{IGP}.

\begin{corol}
\label{closed}
In the setting of theorem \ref{L2Int}. For any $k=0,...,m$ 
 $$H^k_{2,\max}(\mathrm{reg}(X),g)\ \mathrm{and}\ H^k_{2,\min}(\mathrm{reg}(X),g)$$ are finite dimensional.
In particular the complexes $$(L^2\Omega^k(\mathrm{reg(X)},d_{k,\max}))\ \mathrm{and}\ (L^2\Omega^k(\mathrm{reg(X)},d_{k,\max}))$$ are Fredholm complexes.
\end{corol}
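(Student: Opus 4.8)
The plan is to read off both finiteness statements directly from Theorem~\ref{L2Int}, and then to recognise the Fredholm property as a mere restatement of the definition recalled in Section~\ref{s1}.

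First I would invoke Theorem~\ref{L2Int}. Since $X$ is a compact smoothly oriented Thom--Mather stratified space and $g$ is a $\hat{c}$-iterated conic metric on $\mathrm{reg}(X)$, that theorem provides isomorphisms $H^k_{2,\max}(\mathrm{reg}(X),g)\cong I^{q_g}H^k(X,\mathcal{R}_0)$ and $H^k_{2,\min}(\mathrm{reg}(X),g)\cong I^{p_g}H^k(X,\mathcal{R}_0)$ for every $k=0,\dots,m$. Hence it suffices to observe that the intersection cohomology groups on the right-hand sides are finite dimensional. This is standard: a compact Thom--Mather stratified space admits a finite triangulation compatible with its stratification, so the intersection (co)chain complex is finite dimensional in each degree, and therefore so is its cohomology; the use of the stratified coefficient system $\mathcal{R}_0$ does not affect this, since it only alters the complex over the closed subset $X_{m-1}$ and still yields finitely generated groups. (Alternatively, in the case $X_{m-1}=X_{m-2}$ with $p_g$ a perversity, one may simply use the remark following Theorem~\ref{L2Int} identifying the right-hand sides with $I^{q_g}H^k(X,\mathbb{R})$ and $I^{p_g}H^k(X,\mathbb{R})$.) Thus $\dim H^k_{2,\max}(\mathrm{reg}(X),g)<\infty$ and $\dim H^k_{2,\min}(\mathrm{reg}(X),g)<\infty$ for all $k$.

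For the last assertion I would recall from Section~\ref{s1} that a Hilbert complex $\CS_\bu$ is by definition \emph{Fredholm} precisely when $H^q(\CS_\bu)$ is finite dimensional for every $q$. Applying this to the Hilbert complexes $L^2_{\bullet,\max}$ and $L^2_{\bullet,\min}$ introduced in Section~\ref{s8.1}, whose cohomologies are exactly $H^\bullet_{2,\max}(\mathrm{reg}(X),g)$ and $H^\bullet_{2,\min}(\mathrm{reg}(X),g)$, the finiteness just established immediately gives that both complexes are Fredholm. The only step that genuinely requires an external input, rather than an unwinding of definitions, is the finite dimensionality of the intersection cohomology of the compact space $X$; this is the single substantive point, and it presents no real obstacle.
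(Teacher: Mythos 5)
Your proposal is correct and follows essentially the same route as the paper's own (one-line) proof: invoke Theorem~\ref{L2Int}, note that intersection cohomology of the compact space $X$ is finite dimensional, and conclude via the definition of a Fredholm complex recalled in Section~\ref{s1}. The extra justification you give for finite dimensionality of intersection cohomology (finite triangulation, behaviour of $\mathcal{R}_0$) is a harmless elaboration of what the paper takes for granted.
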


\begin{proof}
This follows immediately by the fact that $X$ is compact and thus $I^{p_g}H^k(X,\mathcal{R}_0)$ and $I^{q_g}H^k(X,\mathcal{R}_0)$ are  finite dimensional. 
\end{proof}

We are finally in the good position to apply Th. \ref{TH} and Th. \ref{TH-b} to a smoothly Thom-Mather stratified space endowed with a $\hat{c}$-iterated conic metric.

\begin{theo}
\label{Strati}
Let $X$ be a compact smoothly oriented Thom-Mather stratified space of dimension $m$ and let $g$ be a $\hat{c}$-iterated conic metric on $\mathrm{reg}(X)$. We then have the following isomorphisms:
$$H^q(L^{2'}_{\bullet})\oplus \mathcal{H}^{q-1}_m(L^2_{\bullet,\max}, L^2_{\bullet,\min})\cong I^{q_q}H^{q-1}(X,\mathcal{R}_0)\oplus I^{q_q}H^{q}(X,\mathcal{R}_0)$$
and
$$H^q(L^{2''}_{\bullet})\oplus \mathcal{H}^{q-1}_m(L^2_{\bullet,\max}, L^2_{\bullet,\min})\cong I^{q_q}H^{q-1}(X,\mathcal{R}_0)\oplus I^{p_q}H^{q}(X,\mathcal{R}_0).$$
\end{theo}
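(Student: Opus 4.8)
The plan is to combine the two main abstract theorems of the paper with the geometric identification results already collected in this section. Concretely, I would observe that by Corollary \ref{closed} the Hilbert complexes $L^2_{\bullet,\max}$ and $L^2_{\bullet,\min}$ are both Fredholm complexes when $X$ is compact, and that $d_{q,\min}$ is a closed extension of itself with $d_{q,\max}$ a closed extension of $d_{q,\min}$; thus the pair $(L^2_{\bullet,\max},L^2_{\bullet,\min})$ fits the setup \eqref{M}-\eqref{m} with the extra Fredholm hypotheses needed for both Theorem \ref{TH} and Theorem \ref{TH-b}. Applying Theorem \ref{TH} to this pair immediately gives
$$H^q(L^{2'}_{\bullet})\oplus \mathcal{H}^{q-1}_m(L^2_{\bullet,\max}, L^2_{\bullet,\min})\cong H^q_{2,\max}(\mathrm{reg}(X),g)\oplus H^{q-1}_{2,\max}(\mathrm{reg}(X),g),$$
and applying Theorem \ref{TH-b} gives
$$H^q(L^{2''}_{\bullet})\oplus \mathcal{H}^{q-1}_m(L^2_{\bullet,\max}, L^2_{\bullet,\min})\cong H^q_{2,\min}(\mathrm{reg}(X),g)\oplus H^{q-1}_{2,\max}(\mathrm{reg}(X),g).$$

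Next I would invoke Theorem \ref{L2Int}, which under the stated hypotheses (compact, smoothly oriented, $\hat c$-iterated conic metric) identifies $H^k_{2,\max}(\mathrm{reg}(X),g)\cong I^{q_g}H^k(X,\mathcal{R}_0)$ and $H^k_{2,\min}(\mathrm{reg}(X),g)\cong I^{p_g}H^k(X,\mathcal{R}_0)$ for all $k$. Substituting these isomorphisms into the two displays above — using the $\max$ identification for every $\max$-term and the $\min$ identification for the single $\min$-term in the relative case — yields exactly the two claimed isomorphisms (modulo the harmless notational discrepancy $q_q$ versus $q_g$ in the statement). This is essentially the same bookkeeping already carried out in the proof of Theorem \ref{boundary}, only with the de Rham cohomology replaced by intersection cohomology, so no new analytic input is required beyond citing the two black boxes.

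I do not expect any serious obstacle: the proof is a routine concatenation of Theorem \ref{TH}, Theorem \ref{TH-b}, Corollary \ref{closed} and Theorem \ref{L2Int}. If there is a delicate point, it is merely the verification that all the Fredholm hypotheses of Theorem \ref{TH-b} (which requires \emph{both} complexes Fredholm, not just $\CS_{\bu,M}$) are met — but this is precisely the content of Corollary \ref{closed}, which gives finite-dimensionality of both $H^k_{2,\max}$ and $H^k_{2,\min}$ and hence the Fredholmness of both $L^2_{\bullet,\max}$ and $L^2_{\bullet,\min}$. One should also note that, unlike Theorem \ref{boundary}, here the term $\mathcal{H}^{q-1}_m(L^2_{\bullet,\max}, L^2_{\bullet,\min})$ need not vanish (the unique continuation argument used on the compact manifold with boundary does not generalize to the stratified setting), which is exactly why the result is stated with this summand left explicit on the left-hand side.

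\begin{proof}
By Corollary \ref{closed}, since $X$ is compact the complexes $L^2_{\bullet,\max}$ and $L^2_{\bullet,\min}$ are both Fredholm. Moreover, for each $q$, $d_{q,\max}$ is a closed extension of $d_{q,\min}$, so the pair $(L^2_{\bullet,\max}, L^2_{\bullet,\min})$ satisfies the hypotheses of Theorem \ref{TH} and of Theorem \ref{TH-b}. Applying Theorem \ref{TH} to this pair, together with the identification $H^k_{2,\max}(\mathrm{reg}(X),g)\cong I^{q_g}H^k(X,\mathcal{R}_0)$ of Theorem \ref{L2Int}, gives
$$H^q(L^{2'}_{\bullet})\oplus \mathcal{H}^{q-1}_m(L^2_{\bullet,\max}, L^2_{\bullet,\min})\cong H^q_{2,\max}(\mathrm{reg}(X),g)\oplus H^{q-1}_{2,\max}(\mathrm{reg}(X),g)\cong I^{q_g}H^{q-1}(X,\mathcal{R}_0)\oplus I^{q_g}H^{q}(X,\mathcal{R}_0).$$
Likewise, applying Theorem \ref{TH-b} and using both $H^k_{2,\max}(\mathrm{reg}(X),g)\cong I^{q_g}H^k(X,\mathcal{R}_0)$ and $H^k_{2,\min}(\mathrm{reg}(X),g)\cong I^{p_g}H^k(X,\mathcal{R}_0)$ from Theorem \ref{L2Int}, gives
$$H^q(L^{2''}_{\bullet})\oplus \mathcal{H}^{q-1}_m(L^2_{\bullet,\max}, L^2_{\bullet,\min})\cong H^q_{2,\min}(\mathrm{reg}(X),g)\oplus H^{q-1}_{2,\max}(\mathrm{reg}(X),g)\cong I^{q_g}H^{q-1}(X,\mathcal{R}_0)\oplus I^{p_g}H^{q}(X,\mathcal{R}_0).$$
This is the asserted pair of isomorphisms.
\end{proof}
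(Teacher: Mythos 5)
Your proposal is correct and follows exactly the same route as the paper: invoke Corollary \ref{closed} to get the Fredholmness of both $L^2_{\bullet,\max}$ and $L^2_{\bullet,\min}$, then concatenate Theorem \ref{TH}, Theorem \ref{TH-b} and Theorem \ref{L2Int}. Your side remarks (the $q_q$ versus $q_g$ typo, and the fact that $\mathcal{H}^{q-1}_m$ need not vanish here, unlike in Theorem \ref{boundary}) are accurate but not needed for the argument.
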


\begin{proof}
By Cor. \ref{closed}, we know that $(L^2\Omega^k(\mathrm{reg(X)},d_{k,\max}))$ and  $ (L^2\Omega^k(\mathrm{reg(X)},d_{k,\max}))$ are Fredholm complexes. Now the conclusion follows by applying Th. \ref{TH}, Th. \ref{TH-b} and Th. \ref{L2Int}. 
\end{proof}

By Cor. \ref{finiteness}, Cor. \ref{finiteness-rel} and Cor. \ref{difference} we have the the next

\begin{corol}
\label{corol}
In the setting of Th. \ref{Strati}, we have the following:
\begin{multline}
\nonumber
\dim(I^{q_g}H^{q}(X,\mathcal{R}_0))+\\
\dim(I^{q_g}H^{q-1}(X,\mathcal{R}_0)-\dim(\Im(I^{p_g}H^{q-1}(X,\mathcal{R}_0)\rightarrow I^{q_g}H^{q-1}(X,\mathcal{R}_0)))\\ 
\leq\dim(H^{q}(L^{2'}_{\bullet}))\\
\leq \dim(I^{q_g}H^{q}(X,\mathcal{R}_0))+\dim(I^{q_g}H^{q-1}(X,\mathcal{R}_0)
\end{multline}
and
\begin{multline}
\nonumber \dim(I^{p_g}H^{q}(X,\mathcal{R}_0))+\\
\dim(I^{q_g}H^{q-1}(X,\mathcal{R}_0)-\dim(\Im(I^{p_g}H^{q-1}(X,\mathcal{R}_0)\rightarrow I^{q_g}H^{q-1}(X,\mathcal{R}_0)))\\ 
\leq\dim(H^{q}(L^{2''}_{\bullet}))\\
\leq \dim(I^{p_g}H^{q}(X,\mathcal{R}_0))+\dim(I^{q_g}H^{q-1}(X,\mathcal{R}_0).
\end{multline}
Furthermore we have the following equality:
$$\dim(H^q(L^{2'}_{\bullet}))-\dim(H^q(L^{2''}_{\bullet}))=\dim(I^{q_g}H^q(X,\mathcal{R}_0))-\dim(I^{p_g}H^q(X,\mathcal{R}_0)).$$
\end{corol}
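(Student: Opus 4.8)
The plan is to apply Corollary \ref{finiteness}, Corollary \ref{finiteness-rel} and Corollary \ref{difference} to the pair of Hilbert complexes $(\CS_{\bu,M},\CS_{\bu,m})=(L^2_{\bullet,\max},L^2_{\bullet,\min})$ and then to rewrite every term through Theorem \ref{L2Int}. First I would check that this pair is admissible for the three quoted corollaries: by Corollary \ref{closed} both $L^2_{\bullet,\max}$ and $L^2_{\bullet,\min}$ are Fredholm complexes, since $X$ is compact. Hence Corollary \ref{finiteness} bounds $\dim H^{q}(L^{2'}_{\bullet})$ from above by $\dim H^{q}_{2,\max}(\mathrm{reg}(X),g)+\dim H^{q-1}_{2,\max}(\mathrm{reg}(X),g)$ and from below by the same quantity minus $\dim\bigl(\Im(H^{q-1}_{2,\min}(\mathrm{reg}(X),g)\to H^{q-1}_{2,\max}(\mathrm{reg}(X),g))\bigr)$; Corollary \ref{finiteness-rel} gives the analogous two-sided bound for $\dim H^{q}(L^{2''}_{\bullet})$, with $\dim H^{q}_{2,\max}(\mathrm{reg}(X),g)$ replaced by $\dim H^{q}_{2,\min}(\mathrm{reg}(X),g)$; and Corollary \ref{difference} yields $\dim H^{q}(L^{2'}_{\bullet})-\dim H^{q}(L^{2''}_{\bullet})=\dim H^{q}_{2,\max}(\mathrm{reg}(X),g)-\dim H^{q}_{2,\min}(\mathrm{reg}(X),g)$.

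Next I would feed in Theorem \ref{L2Int}, which provides the translation dictionary
$$\dim H^{k}_{2,\max}(\mathrm{reg}(X),g)=\dim I^{q_g}H^{k}(X,\mathcal{R}_0),\qquad \dim H^{k}_{2,\min}(\mathrm{reg}(X),g)=\dim I^{p_g}H^{k}(X,\mathcal{R}_0).$$
This immediately takes care of every summand except the image term. For that term I would use that the inclusion of Hilbert complexes $L^2_{\bullet,\min}\hookrightarrow L^2_{\bullet,\max}$ induces, on cohomology in degree $q-1$, a map which under the isomorphisms of Theorem \ref{L2Int} corresponds to the canonical comparison map $I^{p_g}H^{q-1}(X,\mathcal{R}_0)\to I^{q_g}H^{q-1}(X,\mathcal{R}_0)$ attached to the perversity inequality $p_g\leq q_g$; granting this, the two images are isomorphic, hence equidimensional, and substituting into the bounds of the first paragraph produces exactly the two chains of inequalities in the statement. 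The equality for the difference is then the plain relabelling of Corollary \ref{difference}.

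\textbf{Main obstacle.} The step requiring real work is the compatibility asserted above, since Theorem \ref{L2Int} is stated only as an abstract isomorphism. I would establish it by appealing to the construction behind Theorem \ref{L2Int}: the $L^2$-de Rham complex of $(\mathrm{reg}(X),g)$, with maximal (resp.\ minimal) domains, is quasi-isomorphic as a complex of sheaves to the Goresky--MacPherson intersection complex of perversity $q_g$ (resp.\ $p_g$), and the tautological inclusion of the minimal complex into the maximal one matches, under these quasi-isomorphisms, the canonical morphism between the intersection complexes of the comparable perversities $p_g\leq q_g$; naturality of the de Rham-type isomorphism in the complex then yields the commuting square on cohomology, whence the equality of the dimensions of the two images. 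Concretely one can invoke the functorial comparison in \cite{FBe} (and \cite{JC}, \cite{Masa} in the special cases treated there). With this compatibility in hand, the claimed inequalities and the final equality follow by the substitution described above.
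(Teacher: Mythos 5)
Your proposal is correct and follows essentially the same route as the paper, which derives this corollary directly from Corollaries \ref{finiteness}, \ref{finiteness-rel} and \ref{difference} (applied to the Fredholm pair $(L^{2}_{\bullet,\max},L^{2}_{\bullet,\min})$, Fredholmness coming from Corollary \ref{closed}) together with the translation provided by Theorem \ref{L2Int}. The one point where you go beyond the paper is your explicit treatment of the image term, i.e.\ the naturality of the isomorphisms of Theorem \ref{L2Int} with respect to the comparison map induced by $L^{2}_{\bullet,\min}\hookrightarrow L^{2}_{\bullet,\max}$ versus the canonical map attached to $p_g\leq q_g$ --- a compatibility the paper uses silently when it writes the lower bound in terms of $\Im(I^{p_g}H^{q-1}(X,\mathcal{R}_0)\rightarrow I^{q_g}H^{q-1}(X,\mathcal{R}_0))$; flagging and justifying it as you do is a legitimate strengthening rather than a deviation.
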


In the remaining part of this section we focus on the case of manifold with conical singularities. These are examples of smoothly Thom-Mather stratified space. As before let $\overline{M}$ a compact  manifold with boundary. Let us denote with $\partial \overline{M}$ and $M$ the boundary and the interior of $\overline{M}$, respectively. Let $g$ be any smooth symmetric section of $T^*\overline{M}\otimes T^*\overline{M}$ that restricts to a Riemannian metric on $M$ and such   that  there exists an open neighbourhood $U$ of $\overline{M}$ and a diffeomorphism  $\psi:  [0,1)\times \partial\overline{M}\rightarrow U$ such that $$\psi^*(g|_U)=dx\otimes dx+x^2h(x)$$ with $h(x)$ a family of Riemannian metrics on $\partial\overline{M}$ that depends smoothly on $x$ up to $0$. Finally let $X$ be the quotient space defined by $\overline{M}/\sim$ with $p\sim q$ if and only if $p$ and $q\in \partial\overline{M}$. It is immediate to check that $X$ becomes a compact smoothly Thom-Mather stratified space  with only one isolated singularity whose regular part is diffeomorphic to $M$. With a little abuse of notation we still denote with $g$ the Riemannian metric that $\mathrm{reg}(X)$ inherits from $(M,g)$. Clearly Def. \ref{iter} is satisfied by $(\mathrm{reg}(X),g)$. We will refer to $g$ as a metric with isolated conical singularities. We want to show now that is this specific setting Th. \ref{Strati} simplifies considerably. More precisely:

\begin{corol}
Let $X$ be a compact and oriented manifold with only isolated singularities of dimension $m$ and let $g$ be a conical metric on $\mathrm{reg}(X)$.
\begin{enumerate}
\item  If $m=2n$ is even or if $m=2n+1$ is odd and $H^n(\partial\overline{M})=\{0\}$ then  for each $q=0,...,m$ we have
$$H^q(L^{2'}_{\bullet})\cong I^{\underline{m}}H^{q}(X,\mathbb{R})\quad \mathrm{and}\quad H^q(L^{2''}_{\bullet})\cong I^{\overline{m}}H^{q}(X,\mathbb{R}).$$

\item If  $m=2n+1$ is odd and $H^n(\partial\overline{M})\neq \{0\}$ then we have
$$H^q(L^{2'}_{\bullet})\cong I^{\underline{m}}H^{q}(X,\mathbb{R})\quad \mathrm{and}\quad H^q(L^{2''}_{\bullet})\cong I^{\overline{m}}H^{q}(X,\mathbb{R})$$
for each $q\neq n+1$ and $$H^{n+1}(L^{2'}_{\bullet})\cong I^{\underline{m}}H^{n}(X,\mathbb{R}).$$

\end{enumerate}
\end{corol}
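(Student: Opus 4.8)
The plan is to deduce the corollary from Th. \ref{Strati} (equivalently from Th. \ref{TH}, Th. \ref{TH-b} and the $L^2$--intersection cohomology isomorphism Th. \ref{L2Int}), together with the classical description of the maximal and minimal de Rham complexes near an isolated conical singularity. First I would recall the following standard fact: for an $m$--dimensional manifold with isolated conical singularities and link $N=\partial\overline{M}$ one has $d_{q,\max}=d_{q,\min}$ for \emph{every} $q$, the only exception being the case $m=2n+1$ odd with $H^n(N)\neq\{0\}$, in which the equality still holds for all $q\neq n$ and fails precisely in degree $q=n$; this is the $L^2$--Stokes theorem for conical metrics, and since $h(x)\to h(0)$ smoothly the $x$--dependence of the conical bundle metric is only a lower order perturbation of the exact model cone (see \cite{JC}, \cite{BL}). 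Hence in case (1) we have $d_{q,\max}=d_{q,\min}$ for all $q$, and in case (2) we have $d_{q,\max}=d_{q,\min}$ and $d_{q-1,\max}=d_{q-1,\min}$ for every $q\notin\{n,n+1\}$. For all those degrees Remark \ref{r1} applies verbatim and gives $H^q(L^{2'}_{\bullet})=H^q_{2,\max}(M,g)=H^q_{2,\min}(M,g)=H^q(L^{2''}_{\bullet})$; combining with Th. \ref{L2Int} and the coefficient simplification recorded just after it (legitimate because $X_{m-1}=X_{m-2}$ for isolated conical singularities, $p_g=\overline{m}$ is an honest perversity and $q_g=\underline{m}$ for a conical metric) one obtains $H^q(L^{2'}_{\bullet})\cong I^{\underline{m}}H^{q}(X,\mathbb{R})$ and $H^q(L^{2''}_{\bullet})\cong I^{\overline{m}}H^{q}(X,\mathbb{R})$. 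This already settles (1) in every degree and settles (2) for all $q\notin\{n,n+1\}$.

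The remaining task is, in case (2), the two degrees $q=n$ and $q=n+1$, and here I would argue through Th. \ref{Strati} together with an identification of the correction term $\mathcal{H}^{q-1}_m(L^2_{\bullet,\max},L^2_{\bullet,\min})=\ker(d_{q-1,\min})\cap\ker(\delta_{q-1,\min})$. Observe that $\delta_{q-1,\min}=(d_{q-2,\max})^{*}$ is exactly the codifferential of the Fredholm complex $L^2_{\bullet,\max}$ (Fredholm by Cor. \ref{closed}), so its strong Hodge decomposition gives $H^{q-1}_{2,\max}(M,g)\cong\ker(d_{q-1,\max})\cap\ker(\delta_{q-1,\min})$. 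Whenever $d_{q-1,\max}=d_{q-1,\min}$ — which in case (2) is exactly the requirement $q-1\neq n$, i.e. $q\neq n+1$ — the two kernels coincide, hence $\mathcal{H}^{q-1}_m(L^2_{\bullet,\max},L^2_{\bullet,\min})\cong H^{q-1}_{2,\max}(M,g)\cong I^{\underline{m}}H^{q-1}(X,\mathbb{R})$. Inserting this into the isomorphisms of Th. \ref{Strati}, and recalling that all the vector spaces in play are finite dimensional (Cor. \ref{closed}, Cor. \ref{finiteness}), the summand $I^{\underline{m}}H^{q-1}(X,\mathbb{R})$ cancels on both sides, which forces $\dim H^q(L^{2'}_{\bullet})=\dim I^{\underline{m}}H^{q}(X,\mathbb{R})$ and $\dim H^q(L^{2''}_{\bullet})=\dim I^{\overline{m}}H^{q}(X,\mathbb{R})$, hence the claimed isomorphisms; taking $q=n$ disposes of that degree.

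Finally I would handle $q=n+1$. Here $d_{n,\max}\neq d_{n,\min}$, so the identification of the previous paragraph is unavailable; note however that $\delta_{n,\min}=(d_{n-1,\max})^{*}=(d_{n-1,\min})^{*}=\delta_{n,\max}$ since $d_{n-1,\max}=d_{n-1,\min}$, so the correction term is $\mathcal{H}^{n}_m(L^2_{\bullet,\max},L^2_{\bullet,\min})=\ker(d_{n,\min})\cap\ker(\delta_{n,\max})$. I would then use the Hodge star $\star\colon L^2\Omega^n(M,g)\to L^2\Omega^{n+1}(M,g)$, a unitary isomorphism because $m=2n+1$ and $M$ is oriented, which conjugates, up to sign, $d_{n,\min}$ to $\delta_{n+1,\min}$ and $\delta_{n,\max}$ to $d_{n+1,\max}$ — these conjugation identities follow from $(d_{n,\max})^{*}=\delta_{n+1,\min}$, $(d_{n-1,\min})^{*}=\delta_{n,\max}$, the preservation of compactly supported forms by $\star$, and the fact that $\star$ intertwines the formal differential and codifferential. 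Consequently $\star$ maps $\mathcal{H}^{n}_m(L^2_{\bullet,\max},L^2_{\bullet,\min})$ isomorphically onto $\ker(\delta_{n+1,\min})\cap\ker(d_{n+1,\max})$, which by the strong Hodge decomposition of $L^2_{\bullet,\max}$ represents $H^{n+1}_{2,\max}(M,g)\cong I^{\underline{m}}H^{n+1}(X,\mathbb{R})$. Substituting $\dim\mathcal{H}^{n}_m(L^2_{\bullet,\max},L^2_{\bullet,\min})=\dim I^{\underline{m}}H^{n+1}(X,\mathbb{R})$ into Th. \ref{Strati} for $q=n+1$ makes the term $I^{\underline{m}}H^{n+1}(X,\mathbb{R})$ cancel, leaving $\dim H^{n+1}(L^{2'}_{\bullet})=\dim I^{\underline{m}}H^{n}(X,\mathbb{R})$, i.e. $H^{n+1}(L^{2'}_{\bullet})\cong I^{\underline{m}}H^{n}(X,\mathbb{R})$, as desired. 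The step I expect to be the main obstacle is the first one — pinning down precisely the degree in which $L^2$--Stokes can fail for a possibly non-exact conical bundle metric — with the careful passage of the Hodge-star conjugation through the maximal and minimal closed extensions in the last step a secondary point of care.
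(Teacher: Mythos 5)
Your proposal is correct and follows essentially the same route as the paper: invoke the Br\"uning--Lesch result that $d_{q,\max}=d_{q,\min}$ except possibly in degree $n$ when $m=2n+1$ and $H^n(\partial\overline{M})\neq\{0\}$, apply Remark \ref{r1} in all degrees where both relevant extensions coincide, and in the two exceptional degrees identify the correction term $\mathcal{H}^{q-1}_m$ with $H^{q-1}_{2,\max}$ (for $q=n$) and with $H^{n+1}_{2,\max}\cong H^n_{2,\min}$ via Poincar\'e duality (for $q=n+1$), then cancel the finite-dimensional summand in Th. \ref{TH}/\ref{TH-b}. The only cosmetic difference is that you carry out the Hodge-star duality explicitly where the paper simply asserts $H^n_{2,\min}\cong H^{n+1}_{2,\max}$, and you cite \cite{JC}, \cite{BL} where the paper cites \cite{BLK}.
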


\begin{proof}
If $\dim X=2n$  or $\dim X=2n+1$ and $H^n(\partial\overline{M})=\{0\}$, then $d_{q,\max}=d_{q,\min}$ for each $q=0,...,m$, see \cite[Th. 3.7-3.8]{BLK}, and the conclusion follows by applying Rmk. \ref{r1} and Th. \ref{L2Int}. 
Let us consider the second point. Again by \cite[Th. 3.8]{BLK} we know that $d_{q,\max}=d_{q,\min}$ whenever $q\neq n$. Therefore, we can use Rmk. \ref{r1} to conclude that $$H^{q}(L^{2'}_{\bullet})=H^{q}_{2\max}(\mathrm{reg}(X),g)=H^{q}_{2\min}(\mathrm{reg}(X),g)=H^{q}(L^{2''}_{\bullet})$$ whenever $q<n$ or $q>n+1$. The conclusion now follows by applying Th. \ref{L2Int}. For the case $q=n$, we note that  $d_{n-1,\max}=d_{n-1,\min}$ and $\delta_{n-1,\max}=\delta_{n-1,\min}$. Therefore, in degree $n-1$ we have $$\mathcal{H}_m^{n-1}(L^{2}_{\bullet,\max},L^2_{\bullet,\min})=\ker(d_{n-1,\max})\cap \ker(\delta_{n-1,\min})\cong  H^{n-1}_{2,\max}(\mathrm{reg}(X),g).$$ Thus, the isomorphisms of Th. \ref{TH} and Th. \ref{TH-b} read in this case as $$H^{n}(L^{2'}_{\bullet})\oplus  H^{n-1}_{2,\max}(\mathrm{reg}(X),g)\cong   H^{n}_{2,\max}(\mathrm{reg}(X),g)\oplus H^{n-1}_{2,\max}(\mathrm{reg}(X),g)$$ and $$H^{n}(L^{2''}_{\bullet})\oplus  H^{n-1}_{2,\max}(\mathrm{reg}(X),g)\cong   H^{n}_{2,\min}(\mathrm{reg}(X),g)\oplus H^{n-1}_{2,\max}(\mathrm{reg}(X),g).$$ Consequently, $$H^{n}(L^{2'}_{\bullet})\cong H^n_{2,\max}(\mathrm{reg}(X),g)\quad \mathrm{and}\quad H^{n}(L^{2''}_{\bullet})\cong H^{n}_{2,\max}(\mathrm{reg}(X),g)$$ and the conclusion now follows by Th. \ref{L2Int}. Finally, to completes the proof of the second point, we tackle the case $q=n+1$. Since $\delta_{n,\max}=\delta_{n,\min}$  we have $$\mathcal{H}_m^n(L^{2}_{\bullet,\max},L^2_{\bullet,\min})=\ker(\delta_{n,\max})\cap \ker(d_{n,\min})\cong H^{n}_{2,\min}(\mathrm{reg}(X),g)\cong H^{n+1}_{2,\max}(\mathrm{reg}(X),g).$$ Thus the isomorphism of Th. \ref{TH}  reads in this case as $$H^{n+1}(L^{2'}_{\bullet})\oplus  H^{n+1}_{2,\max}(\mathrm{reg}(X),g)\cong   H^{n+1}_{2,\max}(\mathrm{reg}(X),g)\oplus H^{n}_{2,\max}(\mathrm{reg}(X),g)$$ and thus we get $$H^{n+1}(L^{2'}_{\bullet})\cong  H^{n}_{2,\max}(\mathrm{reg}(X),g).$$ Once again by applying Th. \ref{L2Int} we reach the desired conclusion.
\end{proof}

\begin{rem}
As a by-product of the above proof we also get that   $$H^{n+1}(L^{2''}_{\bullet})\oplus  H^{n}_{2,\min}(\mathrm{reg}(X),g)\cong   H^{n+1}_{2,\min}(\mathrm{reg}(X),g)\oplus H^{n}_{2,\max}(\mathrm{reg}(X),g).$$ However, this doesn't seem to lead to any further simplification.
\end{rem}

\bibliographystyle{plain}

\begin{thebibliography}{999}

\bibitem {ALMP}  
P{.} Albin, E{.} Leichtnam, R{.}  Mazzeo, P{.}  Piazza.
\newblock The signature package on Witt spaces.
\newblock  {\em Ann. Sci. \'Ec. Norm. Sup\'er.} (4) 45 (2012), no. 2, 241--310.


%\bibitem{AKS} Aronszaj, Krzywicki, Szarsk

\bibitem{Bana}
M{.} Banagl.
\newblock Topological Invariants of Stratified Spaces.
\newblock Springer Monographs in Mathematics (Springer, 2007).

\bibitem{FBe}
 Bei F.
\newblock General perversities and $L^2$-de Rham and Hodge theorems on stratified pseudomanifolds.
\newblock {\em Bull. Sci. Math.} 138(1), 2--40


\bibitem {BHS}
J{.} P{.} Brasselet, G{.} Hector, M{.}  Saralegi.
\newblock $L^2-$cohomologie  des espaces statifi\'es.
\newblock {\em Manuscripta Math.}, 76 (1992), 21--32


\bibitem{RrHS} 
J{.} Brasselet, G{.} Hector, M{.}  Saralegi. 
\newblock Th\'eor\`eme de de Rham pour les vari\'et\'es stratifi\'ees. 
\newblock {\em Ann. Global Anal. Geom.} 9 (1991), no. 3, 211--243. 



\bibitem{BL}
J{.} Br\"uning, M{.} Lesch.
\newblock Hilbert complexes.
\newblock {\em J. Funct. Anal.} 108 (1992), no.1, 88--132.

\bibitem{BLK}
J{.} Br\"uning, M{.} Lesch.
\newblock K\"ahler-Hodge theory and conformally complex cones.
\newblock {\em Geom. Funct. Anal.} 3 (1993), no. 5, 439--473.



\bibitem{CDGM}
S{.} Cappell, D{.} DeTurck, H{.} Gluck, E{.} Miller.
\newblock Cohomology of harmonic forms on Riemannian manifolds with boundary.
{\em Forum Math.} 18 (2006), no.6, 923--931.

\bibitem{JC}
J{.} Cheeger, Jeff.
\newblock On the Hodge theory of Riemannian pseudomanifolds. Geometry of the Laplace operator
\newblock (Proc. Sympos. Pure Math., Univ. Hawaii, Honolulu, Hawaii, 1979), pp. 91--146, Proc. Sympos. Pure Math., XXXVI, Amer. Math. Soc., Providence, R.I., 1980.

\bibitem {IGP}  
G{.} Friedman.
\newblock Intersection homology with general perversities.
\newblock {\em Geom. Dedicata} (2010) 148: 103--135.


\bibitem{Friedman-book}
G{.} Friedman.
\newblock Singular intersection homology.
\newblock New Math. Monogr., 33, Cambridge University Press, Cambridge, 2020.



\bibitem{Gil} Gilkey

\bibitem{GoMac}
M{.} Goresky, R{.} MacPherson.
\newblock Intersection homology theory. 
\newblock {\em Topology}, 19 (1980), 135--162.


\bibitem{GoMac2}
M{.} Goresky, R{.} MacPherson.
\newblock Intersection homology II.
\newblock {\em Invent. Math.},  72 (1983), 77--129.

\bibitem{Kazd}
J{.} Kazdan.
\newblock Unique continuation in geometry.
\newblock {\em Comm. Pure Appl. Math.} 41 (1988), no. 5, 667-681.


\bibitem{KiWoo}
F{.} Kirwan, J{.} Woolf.
\newblock An Introduction to Intersection Homology Theory, 2nd edn.
\newblock (Chapman Hall/CRC, 2006).

\bibitem{JMA} 
J{.} Mather. 
\newblock Notes on topological stability.
\newblock {\em Bull. Amer. Math. Soc.}  (N.S.) 49 (2012), no. 4, 475--506

\bibitem{Masa}
M{.} Nagase.  
\newblock $L^2$-cohomology and intersection homology of stratified spaces.
\newblock {\em Duke Math. J.} 50 (1983), no. 1, 329--368.




\end{thebibliography}

\end{document}